\date{}
\newtheorem{theorem}{Theorem}
\newtheorem{lemma}[theorem]{Lemma}
\newtheorem{prop}[theorem]{Proposition}
\newtheorem{remark}[theorem]{Remark}
\theoremstyle{definition} 
\DeclareMathOperator*{\argmin}{arg\, min}
\newcommand{\<}{\langle{}}
\renewcommand{\>}{\rangle}
\newcommand{\ip}[2]{\llangle#1\hspace*{.5mm},#2\rrangle}
\newcommand{\dual}[2]{\<#1\hspace*{.5mm},#2\>}
\newcommand{\vdual}[2]{(#1\hspace*{.5mm},#2)}
\newcommand{\norm}[2]{\|#1\|_{#2}}
\newcommand{\diam}{\mathrm{diam}}
\newcommand{\wilde}{\widetilde}
\newcommand{\wat}{\widehat}
\newcommand{\transp}{\mathsf{T}}
\def\Grad{\boldsymbol{\varepsilon}}
\def\pwGrad{\boldsymbol{\varepsilon}_\cT}
\def\Div{{\rm\bf div\,}}
\def\pwDiv{{\rm\bf div}_\cT}
\def\grad{\nabla}
\def\pwgrad{\nabla_\cT}
\def\MM{\mathbf{M}}
\def\QQ{\mathbf{Q}}
\def\TTheta{\mathbf{\Theta}}
\def\bq{\boldsymbol{q}}
\newcommand{\bL}{\ensuremath{\mathbf{L}}}
\newcommand{\LL}{\ensuremath{\mathbb{L}}}
\newcommand{\HH}{\ensuremath{\mathbf{H}}}
\newcommand{\PP}{\ensuremath{\mathbb{P}}}
\def\btheta{\boldsymbol{\theta}}
\def\tQ{\wat{\boldsymbol{q}}}
\def\tu{\wat{\boldsymbol{u}}}
\def\tz{\wat{\boldsymbol{z}}}
\def\tv{\wat{\boldsymbol{v}}}
\newcommand{\EE}{\ensuremath{\mathbb{E}}}
\newcommand{\UU}{\ensuremath{\mathcal{U}}}
\newcommand{\VV}{\ensuremath{\mathcal{V}}}
\newcommand{\bP}{\ensuremath{\mathbf{P}}}
\newcommand{\br}{\ensuremath{\mathbf{r}}}
\newcommand{\bj}{\ensuremath{\mathbf{j}}}
\newcommand{\bl}{\ensuremath{\boldsymbol{\ell}}}
\newcommand{\bu}{\ensuremath{\mathbf{u}}}
\newcommand{\vv}{\ensuremath{\mathbf{v}}}
\newcommand{\uu}{\ensuremath{\mathbf{u}}}
\newcommand{\ww}{\ensuremath{\mathbf{w}}}
\newcommand{\bw}{\ensuremath{\mathbf{w}}}
\newcommand{\XXi}{\ensuremath{\boldsymbol{\Xi}}}
\newcommand{\deltaXXi}{\ensuremath{\boldsymbol{\delta\!\Xi}}}
\newcommand{\deltaz}{\delta\!z}
\newcommand{\bdeltaq}{\boldsymbol{\delta}\!\bq}
\newcommand{\bdeltau}{\boldsymbol{\delta}\!\bu}
\newcommand{\traceDD}[1]{\mathrm{tr}_{#1}^{\mathrm{dDiv}}} 
\newcommand{\traceGG}[1]{\mathrm{tr}_{#1}^{\mathrm{Ggrad}}} 
\newcommand{\ttraceGG}[1]{{\wilde{\rm tr}_{#1}^{\mathrm{Ggrad}}}} 
\newcommand{\bH}{\ensuremath{\mathbf{H}}}
\newcommand{\trggrad}[1]{{\mathrm{Ggrad},#1}}
\newcommand{\trddiv}[1]{{\mathrm{dDiv},#1}}
\def\div{{\rm div\,}}
\def\pwdiv{ {\rm div}_\cT\,}
\newcommand{\jump}[1]{[#1]}
\newcommand{\jjump}[1]{\lsem #1\rsem}
\newcommand{\HdDiv}[1]{{H(\div\Div\!,#1)}}
\newcommand{\HDivdiv}[1]{{\bH(\Div\!,\div\!,#1)}}
\def\Divdiv{{\Div\!,\div\!}}
\def\dDiv{{\div\Div\!}}
\def\divref{\widehat\div}
\def\Divref{\widehat\Div}
\def\Gradref{\widehat\Grad}
\def\gradref{\widehat\grad}
\newcommand{\HdDivref}[1]{{H(\divref\Divref\!,#1)}}
\newcommand{\HDivdivref}[1]{{\bH(\Divref\!,\divref\!,#1)}}
\def\Divdivref{{\Divref\!,\divref\!}}
\def\dDivref{{\divref\Divref\!}}
\newcommand{\ttt}{{\rm T}}
\def\projGG{\Pi^{\mathrm{Ggrad}}} 
\def\projDD{\Pi^{\mathrm{dDiv}}} 
\def\projD{\Pi^{\mathrm{Div,div}}} 
\def\projDQ{\Pi^{\mathrm{Div}}}
\def\projDtau{\Pi^{\mathrm{div}}}
\newcommand{\di}{d}
\newcommand{\R}{\ensuremath{\mathbb{R}}}
\newcommand{\N}{\ensuremath{\mathbb{N}}}
\newcommand{\nn}{\ensuremath{\mathbf{n}}}
\newcommand{\cC}{\ensuremath{\mathcal{C}}}
\newcommand{\cCinv}{\ensuremath{\mathcal{C}^{-1}}}
\newcommand{\cT}{\ensuremath{\mathcal{T}}}
\newcommand{\cS}{\ensuremath{\mathcal{S}}}
\newcommand{\bt}{\ensuremath{\mathbf{t}}}
\newcommand{\OO}{\ensuremath{\mathcal{O}}}
\newcommand{\cE}{\ensuremath{\mathcal{E}}}
\newcommand{\cN}{\ensuremath{\mathcal{N}}}
\newcommand{\btau}{{\boldsymbol\tau}}
\newcommand{\deltabtau}{{\boldsymbol{\delta\!\tau}}}
\title{Fully discrete DPG methods for the Kirchhoff--Love plate bending model
\thanks{Supported by CONICYT through FONDECYT projects 1150056 and 11170050}}
\author{
Thomas~F\"uhrer$^\dagger$
\and
Norbert Heuer\thanks{
Facultad de Matem\'aticas, Pontificia Universidad Cat\'olica de Chile,
Avenida Vicu\~na Mackenna 4860, Santiago, Chile,
email: {\tt \{tofuhrer,nheuer\}@mat.uc.cl}}
}
\begin{document}
\maketitle
\begin{abstract}
We extend the analysis and discretization of the Kirchhoff--Love plate bending problem from
[T.~F\"uhrer, N.~Heuer, A.H.~Niemi,
\emph{An ultraweak formulation of the Kirchhoff--Love plate bending model and DPG approximation},
arXiv:~1805.07835, 2018] in two aspects. First, we present a well-posed formulation and quasi-optimal
DPG discretization that includes the gradient of the deflection.
Second, we construct Fortin operators that prove the well-posedness and quasi-optimal convergence
of lowest-order discrete schemes with approximated test functions for both formulations.
Our results apply to the case of non-convex polygonal plates where shear forces can be
less than $L_2$-regular. Numerical results illustrate expected convergence orders.

\bigskip
\noindent
{\em Key words}: Kirchhoff--Love model, plate bending, biharmonic problem, fourth-order elliptic PDE,
discontinuous Petrov--Galerkin method, optimal test functions, Fortin operator

\noindent
{\em AMS Subject Classification}:
74S05, 
74K20, 
35J35, 
65N30, 
35J67  
\end{abstract}

\section{Introduction}

In \cite{FuehrerHN_UFK} we presented an ultraweak formulation of the Kirchhoff--Love
plate bending problem and proposed a stable and quasi-optimally converging discontinuous
Petrov--Galerkin scheme with optimal test functions (DPG method).
Main contributions of that work are the setup and analysis of certain product spaces
with corresponding traces and jumps, and the design of a formulation that does not
require unreasonable regularity of the solution.
Specifically, the shear force vector is not required to be $L_2$-regular
so that the bending moment tensor $\MM$ is not necessarily an element of $\bH(\Div\!,\Omega)$
(the space of symmetric $L_2(\Omega)$-tensors with vector-valued divergence in $L_2(\Omega)$).
It is known that the appropriate space for $\MM$ is $\HdDiv{\Omega}$ of symmetric $L_2(\Omega)$-tensors
with twice iterated divergence in $L_2(\Omega)$, cf.~Amara \emph{at al.} \cite{AmaraCPC_02_BMM},
see also Rafetseder and Zulehner \cite{RafetsederZ_DRK}. This space has been at the center
of our attention in \cite{FuehrerHN_UFK}, and for a more detailed discussion of the
Kirchhoff--Love model and related references, we refer to that reference.

In this paper we continue to study and extend ultraweak formulations of the Kirchhoff--Love
model and DPG discretizations. Specifically, we extend the formulation from
\cite{FuehrerHN_UFK} to one that includes the gradient of the deflection, $\grad u$, as an
independent unknown $\btheta$. Whereas this variable is of interest by itself, we consider this study
also as a preparation to deal with the Reissner--Mindlin model where the related rotation
vector is of importance. In fact, our analysis illustrates that the principal trace and jump
operators from \cite{FuehrerHN_UFK} cover the new case with gradient variable $\btheta$.
We expect that the Reissner--Mindlin model can be covered by an extension of
the Kirchhoff--Love model.

A second contribution of this work relates to the test functions of the DPG method.
In its original version, the DPG method uses \emph{optimal} test functions, see \cite{DemkowiczG_11_CDP}.
These test functions are selected within the test space of the variational formulation
and, thus, their calculation requires the solution of problems in this infinite-dimensional
space. In practice, the infinite-dimensional test space is replaced by a finite-dimensional
subspace of dimension larger than that of the discrete approximation space.
Then, the calculation of test functions becomes feasible.
In \cite{GopalakrishnanQ_14_APD}, Gopalakrishnan and Qiu analyzed this setting
and called it \emph{practical} DPG method in contrast to the original \emph{ideal} method.
In this paper we refer to the practical DPG scheme as the fully discrete one.
As the optimal test functions of the ideal DPG scheme guarantee a uniform discrete
inf-sup property, it is no surprise that the analysis in \cite{GopalakrishnanQ_14_APD} is
based on the construction of Fortin operators.
In this work we do precisely this. We construct Fortin operators for the inherent spaces
of our two ultraweak formulations, with and without gradient variable.
In this way, we prove the well-posedness and quasi-optimal convergence of both fully discrete
schemes, with and without gradient variable. One of the essential ingredients is
to use appropriate transformations of vector and tensor-valued functions.
Whereas it is well known that vector-valued functions are isomorphically transformed
between corresponding spaces by the Piola transformation, it does not maintain symmetry
of tensor-valued functions. We therefore employ a symmetrized Piola transformation
which is known from the second Piola--Kirchhoff stress tensor. We refer to it as the
Piola--Kirchhoff transformation. In \cite{PechsteinS_11_TDN}, Pechstein and Sch\"oberl
employed this very transformation, though considering the Hellinger--Reissner model of elasticity
where the twice iterated divergence of the stress is an element of $H^{-1}(\Omega)$, the dual
of $H^1(\Omega)$ with homogeneous Dirichlet condition on a part of the boundary.

The remainder of this work is as follows. In the next section we briefly present our
form of the Kirchhoff--Love plate bending model, recall Sobolev spaces and traces
in \S\ref{sec_traces}, and present an ultraweak variational formulation in \S\ref{sec_VF}.
Furthermore, \S\S\ref{sec_DPG} and~\ref{sec_DPG_discrete} study well-posedness of the
continuous and discrete formulations, in the former subsection when using optimal test
functions and in the latter for the fully discrete scheme. The fully discrete
case from \cite{FuehrerHN_UFK} without gradient variable is discussed in \S\ref{sec_DPG_discrete2}.
Proofs of the well-posedness of the ultraweak formulation (Theorem~\ref{thm_stab})
and the quasi-optimal convergence with optimal test functions (Theorem~\ref{thm_DPG})
are given in Section~\ref{sec_adj}.
As is standard in DPG analysis, main part is to identify and analyze the adjoint problem.
Section~\ref{sec_Fortin} presents all the required Fortin operators.
Vector and tensor transformations are presented in \S\ref{sec_trafo}, and some required
basis functions and discrete spaces are studied in \S\ref{sec_basis}.
Fortin operators for three different spaces are designed in the remaining three subsections.
All operators are initially defined on a reference element. As illustrated by
Nagaraj \emph{et al.} \cite{NagarajPD_17_CDF}, the local construction can be done
by defining a mixed problem with automatically invertible diagonal block and an
off-diagonal block whose injectivity has to be checked. We verify the injectivity of
this block in one case directly and in the other two cases by MATLAB calculations which are not reported here.
Finally, in Section~\ref{sec_num} we present some numerical experiments that underline
the expected convergence of our fully discrete scheme. In particular, the chosen example
is such that the shear force is not an $L_2$ vector field.

Throughout, the notation $a\lesssim b$ means that there exists a constant $c>0$
that is independent of the underlying mesh such that $a\le cb$.

\section{Model problem and DPG method} \label{sec_model}

Let $\Omega\subset\R^d$ ($d=2,3$) be a bounded simply connected Lipschitz domain with boundary
$\Gamma=\partial\Omega$ and exterior unit normal vector $\nn$. Our model problem is
\begin{subequations} \label{prob}
\begin{alignat}{2}
    -\div\Div\MM             &= f  && \quad\text{in} \quad \Omega\label{p1},\\
    \cCinv\MM + \Grad\btheta &= 0  && \quad\text{in} \quad \Omega\label{p3},\\
    \btheta - \grad u  &= 0  && \quad\text{in} \quad \Omega\label{p4},\\
    u = 0,\quad \nn\cdot\grad u &= 0 &&\quad\text{on}\quad\Gamma.\label{pBC}
\end{alignat}
\end{subequations}
Here, $\btheta$ and $\MM$ are vector-valued and symmetic tensor functions, respectively,
and $\cC$ is a constant fourth-order tensor that induces a self-adjoint isomorphism
from $\LL_2^s(\Omega)$ to $\LL_2^s(\Omega)$, the space of
symmetric second-order tensors with $L_2(\Omega)$ components.
The operator $\Div$ is the divergence operator applied row-wise to tensors and
$\Grad\btheta:=1/2(\grad\btheta+\grad\btheta^\transp)$ so that $\Grad\btheta=\Grad\grad u$ is the
Hessian matrix of $u$.

For $d=2$, \eqref{prob} is a simplified and re-scaled version of the Kirchhoff--Love model,
cf.~\cite{FuehrerHN_UFK} for details.
For its relevance for other fourth-order problems we present our continuous formulation and analysis
also for three space dimensions. The fully discrete analysis will only be provided for two
space dimensions.

In \cite{FuehrerHN_UFK} we considered an ultraweak formulation of \eqref{prob}
and DPG approximation without the variable $\btheta$, that is, of the problem
\[
    -\div\Div\MM = f,\quad
    \cCinv\MM + \Grad\grad u = 0\quad\text{in}\ \Omega;\qquad
    u = 0,\ \nn\cdot\grad u = 0\quad\text{on}\ \Gamma.
\]
In this paper we provide an extension that includes a direct approximation of the gradient $\btheta$,
and analyze fully discrete approximations for formulations with and without the variable $\btheta$.
In order to derive an appropriate variational formulation we need to recall some Sobolev
spaces and associated traces, and introduce a new space and an extension of one of the trace operators.

\subsection{Sobolev spaces and traces} \label{sec_traces}

DPG formulations are related with decompositions of the domain into elements.
Specifically, we consider a (family of) mesh(es) $\cT$ that consists of general non-intersecting
Lipschitz elements. In the following, $\cS$ refers to the skeleton of $\cT$ and consists of the
collection of element boundaries, $\cS=\{\partial T;\;T\in\cT\}$.
Later, for the discretization, we restrict ourselves to two space dimensions and consider
shape-regular triangulations $\cT$.

We use the standard spaces $L_2(T)$, $H^1(T)$ and $H^2(T)$ for $T\in\cT$ with analogous notation
for $\Omega$ instead of $T$. The respective completions of spaces of smooth functions
with compact support are $H^1_0(T)$ and $H^2_0(T)$, and correspondingly for $\Omega$. In this form, spaces
denote those of scalar functions. Vector-valued function spaces are indicated by boldface
symbols, e.g., $\bH^1(T)$, and tensor spaces by blackboard symbols, e.g., $\LL_2(\Omega)$. Furthermore,
$\LL_2^s(T)$ and $\LL_2^s(\Omega)$ denote spaces of symmetric tensor functions.
Throughout, $L_2$-norms are denoted by $\|\cdot\|_T$ (for an element $T\in\cT$) and $\|\cdot\|$ for
$L_2$-functions on $\Omega$, for scalar, vector, and tensor spaces.
The $L_2$-bilinear forms are $\vdual{\cdot}{\cdot}_T$ and $\vdual{\cdot}{\cdot}$ for $T\in\cT$ and
$\Omega$, respectively.
The spaces $H^1(T)$ and $H^2(T)$ are provided with the norms
$\|v\|_{1,T}:=\bigl(\|v\|_T^2+\|\grad v\|_T^2\bigr)^{1/2}$ and
$\|v\|_{2,T}:=\bigl(\|v\|_T^2+\|\Grad\grad v\|_T^2\bigr)^{1/2}$, respectively.
We also use the notation $\|\cdot\|_2:=\|\cdot\|_{2,\Omega}$ for the corresponding norm in $H^2(\Omega)$.
The space $\bH^1_0(T)$ consists of vector-valued functions with vanishing trace on $\partial T$,
with norm $|\cdot|_{1,T}:=\|\Grad\cdot\|_T$.
There are corresponding product spaces related to the mesh $\cT$ with canonical product norms,
e.g., $H^1(\cT)$ and $\bH^1_0(\cT)$ with norms $\|\cdot\|_{1,\cT}$ and $|\cdot|_{1,\cT}$,
respectively, and similarly for the other spaces.
$L_2(\cT)$-dualities are indicated by $\vdual{\cdot}{\cdot}_\cT$, i.e.,
appearing differential operators are taken piecewise with respect to $\cT$. The corresponding
$L_2(\Omega)$-norm is $\|\cdot\|_\cT$.

Furthermore, for $T\in\cT$, we need the space
\(
    \HdDiv{T} 
\)
as the completion of smooth symmetric tensors with respect to the norm
\[
   \|\TTheta\|_{\dDiv,T}:=\bigl(\|\TTheta\|_T^2+\|\div\Div\TTheta\|_T^2\bigr)^{1/2}.
\]
The induced product space is denoted by $\HdDiv{\cT}$ with norm $\|\cdot\|_{\dDiv,\cT}$.
Analogously, we define the global space
\(
    \HdDiv{\Omega} 
\)
with norm
\[
   \|\TTheta\|_{\div\Div}:=\bigl(\|\TTheta\|^2+\|\div\Div\TTheta\|^2\bigr)^{1/2}.
\]
In \cite{FuehrerHN_UFK}, we studied two trace operators that stem from twice integrating by parts
the term $\Grad\grad u$,
\begin{equation} \label{trGGT}
   \traceGG{T}:\;
   \left\{\begin{array}{cll}
      H^2(T) & \to & \HdDiv{T}',\\
      z & \mapsto & \dual{\traceGG{T}(z)}{\TTheta}_{\partial T} :=
                    \vdual{\div\Div\TTheta}{z}_T - \vdual{\TTheta}{\Grad\grad z}_T
   \end{array}\right.
\end{equation}
and
\begin{equation} \label{trDDT}
   \traceDD{T}:\;
   \left\{\begin{array}{cll}
      \HdDiv{T} & \to & H^2(T)',\\
      \TTheta & \mapsto & \dual{\traceDD{T}(\TTheta)}{z}_{\partial T} :=
                    \vdual{\div\Div\TTheta}{z}_T - \vdual{\TTheta}{\Grad\grad z}_T
   \end{array}\right.
\end{equation}
for $T\in\cT$, and the corresponding collective trace operators
\begin{equation} \label{trGG}
   \traceGG{}:\;
   \left\{\begin{array}{cll}
      H^2_0(\Omega) & \to & \HdDiv{\cT}',\\
      z & \mapsto & \traceGG{}(z) := (\traceGG{T}(z))_T
   \end{array}\right.
\end{equation}
and
\[
   \traceDD{}:\;
   \left\{\begin{array}{cll}
      \HdDiv{\Omega} & \to & H^2(\cT)',\\
      \TTheta & \mapsto & \traceDD{}(\TTheta) := (\traceGG{T}(\TTheta))_T
   \end{array}\right.
\]
with dualities (for $z\in H^2_0(\Omega)$, $\TTheta\in \HdDiv{\cT}$)
\begin{equation} \label{ipGG}
   \dual{\traceGG{}(z)}{\TTheta}_\cS
   := \sum_{T\in\cT} \dual{\traceGG{T}(z)}{\TTheta}_{\partial T}
   = \vdual{\div\Div\TTheta}{z}_\cT - \vdual{\TTheta}{\Grad\grad z}
\end{equation}
and (for $\TTheta\in\HdDiv{\Omega}$, $z\in H^2(\cT)$)
\begin{equation} \label{ipDD}
   \dual{\traceDD{}(\TTheta)}{z}_\cS
   := \sum_{T\in\cT} \dual{\traceDD{T}(\TTheta)}{z}_{\partial T}
   = \vdual{\div\Div\TTheta}{z} - \vdual{\TTheta}{\Grad\grad z}_\cT.
\end{equation}
These trace operators give rise to the trace spaces
\begin{align*}
   \bH^{3/2,1/2}(\partial T) &:= \traceGG{T}(H^2(T))\quad (T\in\cT),\qquad
   \bH^{3/2,1/2}_{00}(\cS)    := \traceGG{}(H^2_0(\Omega))
\end{align*}
and
\begin{align*}
   \bH^{-3/2,-1/2}(\partial T) &:= \traceDD{T}(\HdDiv{T})\quad (T\in\cT),\\
   \bH^{-3/2,-1/2}(\cS)        &:= \traceDD{}(\HdDiv{\Omega}).
\end{align*}
Of course, the duality relations \eqref{ipGG}, \eqref{ipDD} amount to integration-by-parts formulas,
and the implied dualities for $\tz\in\bH^{3/2,1/2}_{00}(\cS)$ and $\tQ\in\bH^{-3/2,-1/2}(\cS)$ are
\begin{align} \label{dualityGG}
   \dual{\tz}{\TTheta}_\cS &:= \dual{\traceGG{}(z)}{\TTheta}_\cS
   \quad\forall\TTheta\in\HdDiv{\cT}
\end{align}
with $z\in H^2_0(\Omega)$ such that $\traceGG{}(z)=\tz$, and
\begin{align} \label{dualityDD}
   \dual{\tQ}{z}_\cS &:= \dual{\traceDD{}(\TTheta)}{z}_\cS
   \quad\forall z\in H^2(\cT)
\end{align}
with $\TTheta\in\HdDiv{\Omega}$ such that $\traceDD{}(\TTheta)=\tQ$.
The trace spaces are provided with the norms
\begin{alignat*}{2}
   \|\tz\|_{3/2,1/2,\partial T} &:= 
   \sup_{0\not=\TTheta\in\HdDiv{T}} \frac{\dual{\tz}{\TTheta}_{\partial T}}{\|\TTheta\|_{\dDiv,T}}
   \qquad&&
   \begin{cases}\text{using duality \eqref{trGGT} with } z\in H^2(T)\\
                \text{such that } \traceGG{T}(z)=\tz \quad(T\in\cT),
   \end{cases}\\
   \|\tz\|_{3/2,1/2,00,\cS} &:=
   \sup_{0\not=\TTheta\in\HdDiv{\cT}} \frac{\dual{\tz}{\TTheta}_\cS}{\|\TTheta\|_{\dDiv,\cT}}
   &&\hspace{0.8em}\text{using duality } \eqref{dualityGG},
\end{alignat*}
and
\begin{alignat}{2}
   \|\tQ\|_{-3/2,-1/2,\partial T} &:= \sup_{0\not=z\in H^2(T)} \frac{\dual{\tQ}{z}_{\partial T}}{\|z\|_{2,T}}
   \qquad&&
   \begin{cases}\text{using duality \eqref{trDDT} with } \TTheta\in\HdDiv{T}\\
                \text{such that } \traceDD{T}(\TTheta)=\tQ\quad(T\in\cT),
   \end{cases}\nonumber\\
   \|\tQ\|_{-3/2,-1/2,\cS} &:=  \sup_{0\not=z\in H^2(\cT)} \frac{\dual{\tQ}{z}_\cS}{\|z\|_{2,\cT}}
   &&\hspace{0.8em}\text{using duality } \eqref{dualityDD},
   \label{def_norm_trDD}
\end{alignat}
respectively. By \cite[Lemmas 2, 3 and Propositions 5, 9]{FuehrerHN_UFK} these are indeed norms.

Now, due to the presence of the gradient unknown in our model problem, we need to introduce a Sobolev
space where two variables are jointly controlled. For $T\in\cT$, we define the combined local test space
\(
    \HDivdiv{T}
\)
with components $(\XXi,\btau)$ of symmetric tensors $\XXi$ and vector functions $\btau$
as the completion (of smooth tensor and vector functions) with respect to the norm
\[
    \|(\XXi,\btau)\|_{\Divdiv,T}
    :=
    \bigl(\|\XXi\|_T^2 + \|\btau - \Div\XXi\|_T^2 + \|\div\btau\|_T^2\bigr)^{1/2}.
\]
The corresponding product space is $\HDivdiv{\cT}$ with norm $\|(\cdot,\cdot)\|_{\Divdiv,\cT}$.
Elements of this space can be tested with traces of $H^2$-spaces.
Specifically, for $T\in\cT$, we define the formally new trace operator
\begin{align} \label{trGG2T}
   \ttraceGG{T}:\;
   \left\{\begin{array}{cll}
      H^2(T) & \to & \HDivdiv{T}',\\
      z & \mapsto & \dual{\ttraceGG{T}(z)}{(\XXi,\btau)}_{\partial T}\\
      && := \vdual{\div\btau}{z}_T + \vdual{\btau-\Div\XXi}{\grad z}_T - \vdual{\XXi}{\Grad\grad z}_T
   \end{array}\right..
\end{align}
It is uniformly bounded for $T\in\cT$ if we provide $H^2(T)$ with the norm
$\|\cdot\|_{2,T}$ enriched with the $L_2$-norm of the gradient.
In any case, we will not use bounds of this operators but rather of the corresponding
global operator defined by
\begin{equation} \label{trGG2}
   \ttraceGG{}:\;
   \left\{\begin{array}{cll}
      H^2_0(\Omega) & \to & \HDivdiv{\cT}',\\
      z & \mapsto & \dual{\ttraceGG{}(z)}{(\XXi,\btau)}_\cS\\
    && := \vdual{\div\btau}{z}_\cT + \vdual{\btau-\Div\XXi}{\grad z}_\cT - \vdual{\XXi}{\Grad\grad z}.
   \end{array}\right.
\end{equation}
Like for the previous trace operator $\traceGG{}$, integration by parts reveals that the functional
$\ttraceGG{}(z)$ only depends on the traces of $z$ and $\grad z$ on $\cS$ (the union of the boundaries
of the elements $T\in\cT$). These traces are well defined in the standard way such that
$\ttraceGG{}\bigl(H^2_0(\Omega)\bigr)=\traceGG{}\bigl(H^2_0(\Omega)\bigr)=\bH^{3/2,1/2}_{00}(\cS)$.

Note that the norm in $\bH^{3/2,1/2}_{00}(\cS)$ is defined dual to the (quotient) norm in $\HdDiv{\cT}$,
but here we test with elements of $\HDivdiv{\cT}$.
Therefore, we switch to the trace norm
\[
   \|\tz\|_\trggrad{0,\cS} := \inf\{\|v\|_2;\; v\in H^2_0(\Omega),\ \traceGG{}(v)=\tz\}
\]
which, in any case, is identical to the other norm due to \cite[Proposition 9]{FuehrerHN_UFK},
\[
   \|\tz\|_{3/2,1/2,00,\cS} = \|\tz\|_\trggrad{0,\cS}\quad\forall\tz\in\bH^{3/2,1/2}_{00}(\cS).
\]
For $\tz\in \bH^{3/2,1/2}_{00}(\cS)$ and $(\XXi,\btau)\in\HDivdiv{\cT}$ we use the duality pairing
\begin{align} \label{dualityGG2}
   \dual{\tz}{(\XXi,\btau)}_\cS
   &:= \dual{\ttraceGG{}(z)}{(\XXi,\btau)}_\cS
   \quad\text{for}\ z\in H^2_0(\Omega)\text{ with } \ttraceGG{}(z)=\tz.
\end{align}
Let us note that the definitions \eqref{ipGG} and \eqref{trGG2} imply the identity
\[
   \dual{\ttraceGG{}(z)}{(\TTheta,\Div\TTheta)}_\cS
   =
   \dual{\traceGG{}(z)}{\TTheta}_\cS
   \quad\forall\TTheta\in\HdDiv{\cT}.
\]
Since
\[
   \|(\TTheta,\Div\TTheta)\|_{\Divdiv,\cT} = \|\TTheta\|_{\dDiv,\cT}\quad\forall\TTheta\in\HdDiv{\cT}
\]
it is therefore clear that $\ttraceGG{}$ is merely an extension of $\traceGG{}$
when identifying $\HdDiv{\cT}$ with a subspace of $\HDivdiv{\cT}$ through the map
$\TTheta\mapsto (\XXi,\btau):=(\TTheta,\Div\TTheta)$.
Furthermore, for sufficiently smooth functions $(\XXi,\btau)$, e.g., such that
$\Div\XXi|_T\in\bL_2(T)$ for $T\in\cT$,

\[
   \dual{\ttraceGG{T}(z)}{(\XXi,\btau)}_{\partial T}
   =
   \dual{\nn\cdot\btau}{z}_{\partial T}
   -
   \dual{\XXi\nn}{\grad z}_{\partial T}\quad\forall z\in H^2(T)
\]
where the boundary dualities are defined in the usual sense, cf.~\cite[Remark 1]{FuehrerHN_UFK}.

\subsection{Variational formulation} \label{sec_VF}

Now let us consider our problem \eqref{prob}.
Testing relation \eqref{p1} with $z\in H^2(\cT)$ and using \eqref{ipDD} gives
\begin{align} \label{test_divDivM}
   -\vdual{f}{z} = \vdual{\MM}{\Grad\grad z}_\cT + \dual{\traceDD{}(\MM)}{z}_\cS.
\end{align}
Now, testing relations \eqref{p3} and \eqref{p4} with independent functions of natural
regularity, this leads to a stability problem in the corresponding adjoint problem. In fact,
the unknown used to test equation \eqref{p4} has the same regularity issue as the shear force
$\Div\MM$ which is not in $\bL_2(\Omega)$ in general
(cf.~\cite{FuehrerHN_UFK} and our example in Section~\ref{sec_num}).
The combined test space $\HDivdiv{\cT}$ circumvents this problem.

Testing \eqref{p3} and \eqref{p4} with $\XXi$ and $\btau$, respectively, for
$(\XXi,\btau)\in \HDivdiv{\cT}$, using \eqref{p4} to replace $\Grad\btheta=\Grad\grad u$,
we obtain by \eqref{trGG2} (with $z$ replaced by $u$) the relation
\begin{align} \label{test_p34}
   \lefteqn{
    \vdual{\cCinv\MM}{\XXi} + \vdual{\Grad\grad u}{\XXi}
    +\vdual{\btheta}{\btau} - \vdual{\grad u}{\btau}
   }\nonumber\\
   &=
   \vdual{\cCinv\MM}{\XXi} + \vdual{\btheta}{\btau}
   -\vdual{\grad u}{\Div\XXi}_\cT + \vdual{u}{\div\btau}_\cT - \dual{\ttraceGG{}(u)}{(\XXi,\btau)}_\cS
   =0.
\end{align}
Since $\grad u=\btheta$, one sees that the terms
$\vdual{\btheta}{\btau}$ and $\vdual{\grad u}{\Div\XXi}_\cT$ are well defined
as the difference $\vdual{\btheta}{\btau-\Div\XXi}_\cT$ because $\btau-\pwDiv\XXi\in\bL_2(\Omega)$.
Now, combining \eqref{test_divDivM} and \eqref{test_p34}, we obtain
\begin{align*}
   &\vdual{\MM}{\Grad\grad z}_\cT + \dual{\traceDD{}(\MM)}{z}_\cS
    \\
    &+ \vdual{\cCinv\MM}{\XXi} +  \vdual{\btheta}{\btau-\Div\XXi}_\cT
     + \vdual{u}{\div\btau}_\cT -\dual{\ttraceGG{}(u)}{(\XXi,\btau)}_\cS
     = -\vdual{f}{z}.
\end{align*}
Finally we introduce independent trace variables $\tu:=\ttraceGG{}(u)$, $\tQ:=\traceDD{}(\MM)$,
define the ansatz and test spaces
\begin{align*}
   &\UU := L_2(\Omega)\times\bL_2(\Omega)\times\LL_2^s(\Omega)\times
           \bH^{3/2,1/2}_{00}(\cS)\times \bH^{-3/2,-1/2}(\cS),\\
   &\VV := H^2(\cT)\times\HDivdiv{\cT}
\end{align*}
with respective (squared) norms
\begin{align*}
   \|(u,\btheta,\MM,\tu,\tQ)\|_\UU^2
   &:=
   \|u\|^2 + \|\btheta\|^2 + \|\MM\|^2 + \|\tu\|_\trggrad{\cS,0}^2 + \|\tQ\|_{-3/2,-1/2,\cS}^2,
   \\
   \|(z,\XXi,\btau)\|_\VV^2
   &:=
   \|z\|_{2,\cT}^2 + \|(\XXi,\btau)\|_{\Divdiv,\cT}^2
   =
   \|z\|_{2,\cT}^2 + \|\XXi\|^2 + \|\btau-\Div\XXi\|_\cT^2 + \|\div\btau\|_\cT^2,
\end{align*}
and the following (bi)linear forms,
\begin{align} \label{b}
   b(u,\btheta,\MM,\tu,\tQ;z,\XXi,\btau)
   &:=
   \vdual{u}{\div\btau}_\cT + \vdual{\btheta}{\btau-\Div\XXi}_\cT
   + \vdual{\MM}{\cCinv\XXi+\Grad\grad z}_\cT \nonumber\\
   &\quad -\dual{\tu}{(\XXi,\btau)}_\cS + \dual{\tQ}{z}_\cS
   \\
   L(z,\XXi,\btau) &:= -\vdual{f}{z}.\nonumber
\end{align}
Then, our ultraweak variational formulation of \eqref{prob} is:
\emph{Find $(u,\btheta,\MM,\tu,\tQ)\in \UU$ such that}
\begin{align} \label{VF}
   b(u,\btheta,\MM,\tu,\tQ;z,\XXi,\btau) = L(z,\XXi,\btau)
   \quad\forall (z,\XXi,\btau)\in\VV.
\end{align}

\subsection{Well-posedness and DPG approximation} \label{sec_DPG}

Let us state one of our main results.

\begin{theorem} \label{thm_stab}
Let $f\in L_2(\Omega)$, or any functional $L\in\VV'$, be given. Then, \eqref{VF}
has a unique solution $(u,\btheta,\MM,\tu,\tQ)\in \UU$, and it satisfies
\[
   \|u\| + \|\btheta\| + \|\MM\| + \|\tu\|_\trggrad{\cS,0} + \|\tQ\|_{-3/2,-1/2,\cS}
   \lesssim
   \|f\|\quad\text{(or $\|L\|_{\VV'}$)}.
\]
The hidden constant is independent of $f$ (or $L$) and $\cT$.
\end{theorem}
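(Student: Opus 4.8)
The plan is to establish well-posedness of \eqref{VF} by verifying the Banach--Ne\v{c}as--Babu\v{s}ka conditions for the bilinear form $b$ of \eqref{b} on $\UU\times\VV$, following the standard route for ultraweak DPG formulations: boundedness of $b$, nondegeneracy (triviality of the kernel of the adjoint operator), and the inf--sup condition, the last two being reduced to the well-posedness of the \emph{adjoint problem}. The overall structure mimics \cite{DemkowiczG_11_CDP,GopalakrishnanQ_14_APD} and \cite{FuehrerHN_UFK}: once the unbroken problem is known to be well-posed and the trace operators of \S\ref{sec_traces} are well-defined, bounded and surjective onto their trace spaces, the broken formulation inherits well-posedness with mesh-independent constants. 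Boundedness of $b$ is routine: the volume terms are handled by Cauchy--Schwarz together with the boundedness of $\cCinv$ and $\|\btau-\Div\XXi\|_\cT\le\|(\XXi,\btau)\|_{\Divdiv,\cT}$, while $|\dual{\tu}{(\XXi,\btau)}_\cS|\le\|\tu\|_\trggrad{\cS,0}\,\|(\XXi,\btau)\|_{\Divdiv,\cT}$ and $|\dual{\tQ}{z}_\cS|\le\|\tQ\|_{-3/2,-1/2,\cS}\,\|z\|_{2,\cT}$ follow directly from the definitions of the trace norms via the pairings \eqref{dualityGG2}, \eqref{dualityDD}, using also $\|(\TTheta,\Div\TTheta)\|_{\Divdiv,\cT}=\|\TTheta\|_{\dDiv,\cT}$ and $\|\tz\|_{3/2,1/2,00,\cS}=\|\tz\|_\trggrad{\cS,0}$.

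For the nondegeneracy condition, assume $(z,\XXi,\btau)\in\VV$ satisfies $b(u,\btheta,\MM,\tu,\tQ;z,\XXi,\btau)=0$ for all arguments in $\UU$. Testing first with the field components (with vanishing traces) forces, elementwise on each $T\in\cT$, the relations $\div\btau=0$, $\btau-\Div\XXi=0$ and $\cCinv\XXi+\Grad\grad z=0$. Testing then with the trace components forces conformity: $\dual{\tQ}{z}_\cS=0$ for all $\tQ\in\bH^{-3/2,-1/2}(\cS)$ yields $z\in H^2_0(\Omega)$, and $\dual{\tu}{(\XXi,\btau)}_\cS=0$ for all $\tu\in\bH^{3/2,1/2}_{00}(\cS)$, together with the elementwise identities just obtained, places $(\XXi,\btau)$ in the conforming subspace of $\HDivdiv{\cT}$ (these ``jump vanishes'' implications are the trace/jump lemmas of \cite{FuehrerHN_UFK}, transported to $\ttraceGG{}$ through \eqref{trGG2}). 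Hence the integration-by-parts identities hold globally and $z\in H^2_0(\Omega)$ solves $\div\Div(\cC\Grad\grad z)=0$ in $\Omega$; coercivity of the $\cC$-weighted biharmonic form on $H^2_0(\Omega)$ gives $z=0$, whence $\XXi=-\cC\Grad\grad z=0$ and $\btau=\Div\XXi=0$.

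The main step is the inf--sup condition, which — following the standard argument for ultraweak DPG — reduces to the well-posedness of the adjoint problem (supplemented by a bounded right inverse of the trace operators to treat the trace part of the trial norm): given $(g_1,g_2,g_3)\in L_2(\Omega)\times\bL_2(\Omega)\times\LL_2^s(\Omega)$, find a conforming $(z,\XXi,\btau)$ with $\div\btau=g_1$, $\btau-\Div\XXi=g_2$, $\cCinv\XXi+\Grad\grad z=g_3$, and $\|(z,\XXi,\btau)\|_\VV\lesssim\|g_1\|+\|g_2\|+\|g_3\|$. Eliminating $\XXi=\cC(g_3-\Grad\grad z)$ and $\btau=g_2+\Div\XXi$, the first equation becomes the biharmonic problem $-\div\Div(\cC\Grad\grad z)=g_1-\div g_2-\div\Div(\cC g_3)$ for $z\in H^2_0(\Omega)$, whose right-hand side lies in $H^{-2}(\Omega)$ with norm $\lesssim\|g_1\|+\|g_2\|+\|g_3\|$; coercivity of the $\cC$-weighted Hessian form on $H^2_0(\Omega)$ together with the Poincar\'e bound $\|w\|_2\lesssim\|\Grad\grad w\|$ produces a unique $z$ with $\|z\|_2\lesssim\|g_1\|+\|g_2\|+\|g_3\|$, hence $\XXi\in\LL_2^s(\Omega)$ with the same bound. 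The crucial observation is that, although $\Div\XXi$ and therefore $\btau$ need not belong to $\bL_2(\Omega)$, by construction $\btau-\Div\XXi=g_2\in\bL_2(\Omega)$ and $\div\btau=g_1\in L_2(\Omega)$, so $(\XXi,\btau)$ lies in the conforming subspace of $\HDivdiv{\cT}$ with $\|(\XXi,\btau)\|_{\Divdiv,\cT}\lesssim\|g_1\|+\|g_2\|+\|g_3\|$. This is exactly the regularity difficulty — the shear force $\Div\MM$ is not an $\bL_2$-field on non-convex plates — that the combined space $\HDivdiv{\cT}$ is designed to absorb. Assembling the three ingredients in the abstract framework gives the unique solution of \eqref{VF} and the stability estimate, with constants depending only on $\cC$, the Poincar\'e constant of $H^2_0(\Omega)$, and the mesh-independent boundedness/surjectivity of the trace operators of \S\ref{sec_traces}.

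I expect the principal obstacle to be not the biharmonic solvability itself but the bookkeeping that makes the inf--sup constant mesh-independent: one must carefully match the broken test-space norm $\|\cdot\|_\VV$ with the two quotient/dual trace norms, using the characterizations $\bH^{3/2,1/2}_{00}(\cS)=\traceGG{}(H^2_0(\Omega))=\ttraceGG{}(H^2_0(\Omega))$ and $\bH^{-3/2,-1/2}(\cS)=\traceDD{}(\HdDiv{\Omega})$ and the norm identities above, so that the local-to-global gluing of test functions and the adjoint estimate can be combined without any mesh-dependent loss; the second delicate point, already visible in \eqref{test_p34}, is that the terms $\vdual{\btheta}{\btau}$ and $\vdual{\grad u}{\Div\XXi}_\cT$ are only individually meaningful after being paired as $\vdual{\btheta}{\btau-\Div\XXi}_\cT$, which must be respected throughout the estimates.
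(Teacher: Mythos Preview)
Your proposal is correct but takes a genuinely different route from the paper. You work in the Carstensen--Demkowicz--Gopalakrishnan ``breaking spaces'' style: prove transposed injectivity directly, then establish the primal inf--sup by solving the \emph{unbroken} adjoint problem (your biharmonic argument for $z\in H^2_0(\Omega)$, then $\XXi=\cC(g_3-\Grad\grad z)$ and $\btau=g_2+\Div\XXi$) and handling the trace components via bounded right inverses of $\traceGG{}$ and $\traceDD{}$. The paper instead formulates the \emph{full} adjoint problem~\eqref{adj} including jump data $\br\in(\bH^{3/2,1/2}_{00}(\cS))'$ and $\bj\in(\bH^{-3/2,-1/2}(\cS))'$, reduces it to the saddle-point problem~\eqref{saddle}, and proves well-posedness of the latter via the broken Poincar\'e inequality of Proposition~\ref{prop_tools}(ii); Proposition~\ref{prop_adj} then delivers the transposed inf--sup $\sup_{\uu}b(\uu,\vv)/\|\uu\|_\UU\gtrsim\|\vv\|_\VV$ in one stroke, and primal injectivity follows by solving~\eqref{adj} with Riesz data. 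Your approach buys modularity and avoids Proposition~\ref{prop_tools}(ii) altogether; the paper's approach buys a shorter endgame once Proposition~\ref{prop_adj} is in hand, with no need to assemble separate test functions for field and trace components. One point to make precise in your version: your nondegeneracy step needs the dual of Proposition~\ref{prop_tools}(i), namely that $\TTheta\in\HdDiv{\cT}$ lies in $\HdDiv{\Omega}$ iff $\dual{\tz}{\TTheta}_\cS=0$ for all $\tz\in\bH^{3/2,1/2}_{00}(\cS)$---this is Proposition~8(ii) of \cite{FuehrerHN_UFK} and is applicable because once $\btau=\Div\XXi$ elementwise, the pair $(\XXi,\btau)$ reduces to an element of $\HdDiv{\cT}$.
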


A proof of this theorem will be given in Section~\ref{sec_pf}.

Now, the construction of the DPG method with optimal test functions for an approximation
of \eqref{VF} is standard. One chooses a finite-dimensional subspace
$\UU_h\subset\UU$ and selects test functions based on the trial-to-test operator
$\ttt:\;\UU\to\VV$, which is defined by
\[
   \ip{\ttt(\uu)}{\vv}_\VV = b(\uu,\vv)\quad\forall\vv\in\VV.
\]
Here, $\ip{\cdot}{\cdot}_\VV$ denotes the inner product in $\VV$,
\begin{align*}
   \ip{(z,\XXi,\btau)}{(\deltaz,\deltaXXi,\deltabtau)}_\VV
   &:=
   \vdual{z}{\deltaz} + \vdual{\Grad\grad z}{\Grad\grad\deltaz}_\cT
   \\
   &\quad +
   \vdual{\XXi}{\deltaXXi} + \vdual{\Div\XXi-\btau}{\Div\deltaXXi-\deltabtau}_\cT
   + \vdual{\div\btau}{\div\deltabtau}_\cT
\end{align*}
for $(z,\XXi,\btau),(\deltaz,\deltaXXi,\deltabtau)\in\VV$.

The discrete method is: \emph{Find $\uu_h\in\UU_h$ such that}
\begin{align} \label{DPG}
   b(\uu_h,\ttt\bdeltau) = L(\ttt\bdeltau) \quad\forall\bdeltau\in\UU_h.
\end{align}
This construction is equivalent to minimizing the residual $\|B(\uu-\uu_h)\|_{\VV'}$
where $B:\;\UU\to\VV'$ is the operator induced by the bilinear form $b(\cdot,\cdot)$.
When referring to DPG schemes, $\|\uu\|_\EE:=\|B\uu\|_{\VV'}$ is called the \emph{energy norm}.
Since the scheme minimizes the residual in the $\VV'$-norm, we obtain
the best approximation in the energy norm. For this and other results we refer to
the first papers on this subject by Demkowicz and Gopalakrishnan, e.g.,~\cite{DemkowiczG_11_ADM}.

If one knows that the energy norm is uniformly equivalent to the norm $\|\cdot\|_\UU$,
then the minimizing property immediately shows quasi-optimal convergence of the DPG scheme
in the $\UU$-norm. Indeed, this is the case so that we can state our second main result.

\begin{theorem} \label{thm_DPG}
Let $f\in L_2(\Omega)$ be given with solution $\uu\in\UU$ to \eqref{VF}.
Furthermore, let $\UU_h\subset\UU$ be a finite-dimensional subspace.
Then, \eqref{DPG} has a unique solution $\uu_h\in\UU_h$,
and it satisfies the quasi-optimal error estimate
\[
   \|\uu-\uu_h\|_\UU \lesssim \|\uu-\bw\|_\UU \quad\forall\bw\in\UU_h.
\]
The hidden constant is independent of $f$, $\cT$ and $\UU_h$.
\end{theorem}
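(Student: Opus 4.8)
The plan is to deduce the statement from the abstract theory of DPG methods with optimal test functions (Demkowicz--Gopalakrishnan, see e.g.\ \cite{DemkowiczG_11_ADM}), whose only inputs are the boundedness of $b(\cdot,\cdot)$ on $\UU\times\VV$ and the inf--sup condition already established in Theorem~\ref{thm_stab}. Once those are in hand, \eqref{DPG} is --- as recalled in \S\ref{sec_DPG} --- the minimizer of the residual $\ww\mapsto\|B\ww-L\|_{\VV'}$ over $\ww\in\UU_h$, hence delivers the best approximation in the energy norm $\|\uu\|_\EE:=\|B\uu\|_{\VV'}$, and quasi-optimality in $\|\cdot\|_\UU$ follows from the equivalence $\|\cdot\|_\EE\simeq\|\cdot\|_\UU$.

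First I would check boundedness: $|b(\uu,\vv)|\lesssim\|\uu\|_\UU\,\|\vv\|_\VV$ for all $\uu=(u,\btheta,\MM,\tu,\tQ)\in\UU$ and $\vv=(z,\XXi,\btau)\in\VV$. The three volume terms in \eqref{b} are estimated by Cauchy--Schwarz together with the definition of $\|\cdot\|_\VV$ and the boundedness of $\cCinv$ on $\LL_2^s(\Omega)$. The skeleton term $\dual{\tQ}{z}_\cS$ is bounded by $\|\tQ\|_{-3/2,-1/2,\cS}\,\|z\|_{2,\cT}$ straight from the definition \eqref{def_norm_trDD}. For $\dual{\tu}{(\XXi,\btau)}_\cS$ I would pick a representative $v\in H^2_0(\Omega)$ with $\ttraceGG{}(v)=\tu$, expand it through \eqref{trGG2}, bound the three resulting terms collectively by $\|(\XXi,\btau)\|_{\Divdiv,\cT}\,\|v\|_2$ (using a Poincar\'e inequality on $\Omega$ to absorb $\|\grad v\|$ into $\|\Grad\grad v\|$, $v\in H^2_0(\Omega)$), and then pass to the infimum over such $v$ to obtain the bound by $\|\tu\|_\trggrad{\cS,0}$.

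Next I would record the energy-norm equivalence. Boundedness gives $\|\uu\|_\EE=\sup_{0\ne\vv\in\VV}b(\uu,\vv)/\|\vv\|_\VV\lesssim\|\uu\|_\UU$, while applying Theorem~\ref{thm_stab} to the functional $L:=B\uu\in\VV'$ --- whose unique solution in $\UU$ is $\uu$ itself --- gives $\|\uu\|_\UU\lesssim\|B\uu\|_{\VV'}=\|\uu\|_\EE$, with constants independent of $\cT$. Consequently $\|\cdot\|_\EE$ is a genuine norm on the finite-dimensional space $\UU_h$, so the $\|\cdot\|_\EE$--orthogonal projection of $\uu$ onto $\UU_h$ exists and is unique; since that projection is exactly the solution $\uu_h$ of \eqref{DPG}, this gives existence and uniqueness of $\uu_h$. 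The best-approximation property together with the norm equivalence then yields, for every $\bw\in\UU_h$,
\[
   \|\uu-\uu_h\|_\UU\;\lesssim\;\|\uu-\uu_h\|_\EE\;\le\;\|\uu-\bw\|_\EE\;\lesssim\;\|\uu-\bw\|_\UU,
\]
which is the asserted estimate; all constants depend only on the boundedness constant of $b$ and the stability constant of Theorem~\ref{thm_stab}, hence are independent of $f$, $\cT$ and $\UU_h$.

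I do not anticipate a genuine obstacle --- the statement is a standard consequence of Theorem~\ref{thm_stab}. The only mildly delicate point is the boundedness of the pairing $\dual{\tu}{(\XXi,\btau)}_\cS$, because $\tu$ is measured in the $H^2_0(\Omega)$--quotient norm $\|\cdot\|_\trggrad{\cS,0}$ while the test object lives in $\HDivdiv{\cT}$ rather than in $\HdDiv{\cT}$; this is exactly what the representative-and-infimum argument above handles, relying on the facts noted in \S\ref{sec_traces} that $\ttraceGG{}$ genuinely extends $\traceGG{}$ and that the two trace norms on $\bH^{3/2,1/2}_{00}(\cS)$ coincide.
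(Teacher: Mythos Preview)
Your proposal is correct and follows essentially the same route as the paper: reduce to the best-approximation property of \eqref{DPG} in the energy norm and establish the equivalence $\|\cdot\|_\EE\simeq\|\cdot\|_\UU$ via boundedness of $b$ and the stability from Theorem~\ref{thm_stab}. The paper's only cosmetic difference is that for the lower bound $\|\uu\|_\UU\lesssim\|\uu\|_\EE$ it appeals directly to the adjoint stability (Proposition~\ref{prop_adj}) rather than to Theorem~\ref{thm_stab}, but since the latter is proved from the former the content is identical.
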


A proof of this theorem will be given in Section~\ref{sec_pf}.

\subsection{Fully discrete scheme} \label{sec_DPG_discrete}

In practice, optimal test functions in scheme \eqref{DPG} have to be approximated.
This is done by replacing the test space $\VV$ by a finite-dimensional subspace
$\VV_h\subset\VV$ and the trial-to-test operator $\ttt:\;\UU\to\VV$ by the discretized operator
$\ttt_h:\;\UU_h\to\VV_h$ defined by
\[
   \ip{\ttt_h(\ww)}{\vv}_\VV = b(\ww,\vv)\quad\forall\vv\in\VV_h.
\]
The fully discrete scheme, called \emph{practical} DPG method by Gopalakrishnan and Qiu
\cite{GopalakrishnanQ_14_APD}, then reads:
\emph{Find $\uu_h\in\UU_h$ such that}
\begin{align} \label{DPG_discrete}
   b(\uu_h,\ttt_h\bdeltau) = L(\ttt_h\bdeltau) \quad\forall\bdeltau\in\UU_h.
\end{align}
The discrete stability and quasi-optimal convergence of \eqref{DPG_discrete} follows
from the existence of a Fortin operator $\Pi:\;\VV\to\VV_h$ satisfying
\begin{subequations} \label{Fortin}
\begin{align}
   &b(\ww,\vv-\Pi\vv) = 0\quad\forall\ww\in\UU_h,\ \forall\vv\in\VV,\\
           &\|\Pi\vv\|_\VV \le C_\Pi\|\vv\|_\VV\quad\forall\vv\in\VV,
\end{align}
\end{subequations}
with a positive constant $C_\Pi$ that is independent of the underlying mesh $\cT$.
To construct a Fortin operator with these properties one usually makes additional assumptions
on the mesh sequence (shape-regularity of elements) and bounds the polynomial degrees.
Theorem~2.1 from \cite{GopalakrishnanQ_14_APD} then proves the following result.

\begin{lemma} \label{la_Fortin}
Assume that there exists a Fortin operator $\Pi:\;\VV\to\VV_h$ satisfying \eqref{Fortin}.
Then, \eqref{DPG_discrete} is well posed and converges quasi-optimally,
\[
   \|\uu-\uu_h\|_\UU \lesssim \|\uu-\bw\|_\UU \quad\forall\bw\in\UU_h
\]
where $\uu_h$ denotes the solution of \eqref{DPG_discrete}.
\end{lemma}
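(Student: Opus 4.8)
This lemma is the specialization to the present setting of the abstract analysis of fully discrete (``practical'') DPG schemes in \cite{GopalakrishnanQ_14_APD}; here I sketch the mechanism. The two inputs are the well-posedness of the continuous formulation \eqref{VF}, which is exactly Theorem~\ref{thm_stab}, and the assumed Fortin operator $\Pi$. The plan is to combine them into a \emph{uniform} discrete inf-sup condition for $b(\cdot,\cdot)$ on $\UU_h\times\VV_h$ and then invoke the standard Petrov--Galerkin theory.

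First I would restate Theorem~\ref{thm_stab} operator-theoretically: the operator $B\colon\UU\to\VV'$ induced by $b$ is boundedly invertible with $\|B^{-1}\|$ bounded independently of $\cT$. Equivalently, with $c:=\|B^{-1}\|^{-1}\gtrsim 1$, for every $\ww\in\UU$ there is $\vv\in\VV\setminus\{0\}$ such that $b(\ww,\vv)\ge c\,\|\ww\|_\UU\,\|\vv\|_\VV$; boundedness of $b$ on $\UU\times\VV$ is immediate from \eqref{b} and the definitions of the norms.

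The key step transfers this to the discrete level via $\Pi$. Fix $\ww_h\in\UU_h$ and pick $\vv\in\VV$ as above. Then $\Pi\vv\in\VV_h$, and using the commutativity $b(\ww_h,\vv-\Pi\vv)=0$ (legitimate since $\ww_h\in\UU_h$) followed by $\|\Pi\vv\|_\VV\le C_\Pi\|\vv\|_\VV$,
\[
   b(\ww_h,\Pi\vv)=b(\ww_h,\vv)\ge c\,\|\ww_h\|_\UU\,\|\vv\|_\VV\ge \frac{c}{C_\Pi}\,\|\ww_h\|_\UU\,\|\Pi\vv\|_\VV .
\]
Hence $\sup_{0\neq\vv_h\in\VV_h}b(\ww_h,\vv_h)/\|\vv_h\|_\VV\ge (c/C_\Pi)\,\|\ww_h\|_\UU$ for all $\ww_h\in\UU_h$, a discrete inf-sup condition whose constant $c/C_\Pi$ depends only on the stability constant of \eqref{VF} and on $C_\Pi$, hence is independent of $\cT$ and of $\UU_h$.

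From here everything is routine. Recalling that by definition of $\ttt_h$ one has $\sup_{0\neq\vv_h\in\VV_h}b(\ww_h,\vv_h)/\|\vv_h\|_\VV=\|\ttt_h\ww_h\|_\VV$ and $b(\ww_h,\ttt_h\ww_h)=\|\ttt_h\ww_h\|_\VV^2$, the discrete inf-sup carries over to the actual (square) Petrov--Galerkin system \eqref{DPG_discrete} with trial space $\UU_h$ and test space $\ttt_h(\UU_h)$, which is therefore uniquely solvable; together with the continuity of $b$ and the continuous stability it yields the C\'ea-type bound $\|\uu-\uu_h\|_\UU\lesssim\|\uu-\bw\|_\UU$ for all $\bw\in\UU_h$, with a hidden constant depending only on $c$, $C_\Pi$ and the continuity constant of $b$. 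There is no genuine obstacle in this argument; all the substance lies in producing a Fortin operator with a \emph{mesh-independent} constant $C_\Pi$, which is the task of Section~\ref{sec_Fortin}, and the only point to bear in mind is that this mesh-independence is precisely what makes the discrete inf-sup constant — and hence the quasi-optimality constant — uniform.
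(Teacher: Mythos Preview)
Your proposal is correct and is precisely the standard Fortin argument behind Theorem~2.1 of \cite{GopalakrishnanQ_14_APD}, which is all the paper invokes for this lemma; the paper gives no independent proof beyond that citation, so your sketch is in fact more detailed than what appears there.
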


For our fully discrete scheme we consider lowest-order approximations
and meshes with shape-regular triangular elements.
We will construct a Fortin operator for this case. Let us start defining the approximation space.

\paragraph{Approximation space.}

We use the construction of discrete subspaces of
$\bH^{3/2,1/2}_{00}(\cS)$ and $\bH^{-3/2,-1/2}(\cS)$ from \cite{FuehrerHN_UFK}.
Specifically, we restrict our consideration to two dimensions (that is, the plate problem),
$\di=2$, and use regular triangular meshes $\cT$ of shape-regular elements.

For $T\in\cT$, let $P^p(T)$ denote the space of polynomials on $T$ which are
of order $\le p\in\N_0$, and define
\begin{align*}
  P^p(\cT) := \{v\in L_2(\Omega);\; v|_T\in P^p(T) \,\forall T\in\cT\}.
\end{align*}
Setting $\bP^p(T):=P^p(T)^2$, $\bP^p(\cT):=P^p(\cT)^2$,
$\PP^{p,s}(T):=P^p(T)^{2\times 2}\cap\LL_2^s(T)$, and
$\PP^{p,s}(\cT) := P^p(\cT)^{2\times 2}\cap\LL_2^s(\Omega)$,
we approximate $(u,\btheta,\MM)\in L_2(\Omega)\times\bL_2(\Omega)\times\LL_2^s(\Omega)$ by
\begin{align*}
  (u_h,\btheta_h,\MM_h) \in P^0(\cT) \times \bP^0(\cT) \times \PP^{0,s}(\cT).
\end{align*}
It remains to introduce discrete spaces for the skeleton variables $(\tu,\tQ)$.
As they are traces, basis functions have to satisfy certain conformity conditions.
This is why we need to introduce some notation for edges.

For $T\in\cT$ let $\cE_T$ denote the set of its edges, and $\cE := \bigcup_{T\in\cT} \cE_T$.
Denoting the space of polynomials up to degree $p$ on $E\in\cE$ by $P^p(E)$, we define
\begin{align*}
  P^p(\cE_T) := \{v\in L_2(\partial T);\; v|_E \in P^p(E)\ \forall E\in \cE_T\},
  \quad T\in\cT.
\end{align*}
Then we define, for $T\in\cT$, the local space
\begin{align} \label{hatUT}
  \widehat U_T
  :=
  \traceGG{T}\left(\{v\in H^2(T);\; \Delta^2 v + v = 0, \, v|_{\partial T} \in P^3(\cE_T), \, 
  \nn\cdot\grad v|_{\partial T}\in P^1(\cE_T)\}\right).
\end{align}
We denote by $\cN_T$ the set of vertices of $T\in\cT$,
set $\cN := \bigcup_{T\in\cT} \cN_T$ and denote by $\cN_0\subset \cN$ the set of nodes
which are not on $\Gamma$. Our discrete subspace of $\bH^{3/2,1/2}_{00}(\cS)$ then is
\begin{align*}
  \widehat U_\cS
  :=
  \{\tv\in \bH^{3/2,1/2}_{00}(\cS);\;
    \tv|_{\partial T}  \in \widehat U_T\ \forall T\in\cT\}
\end{align*}
with associated degrees of freedom $\{(v(e),\grad v(e));\; e\in\cN_0\}$.
The space is of dimension $3\#\cN_0$.

Now, to construct a discrete subspace of $\bH^{-3/2,-1/2}(\cS)$, we define the local space
\begin{align} \label{UdDivT}
\begin{split}
  U_{\trddiv{T}} := \{\TTheta\in &\,\HdDiv{T};\; \Grad\grad\div\Div \TTheta +\TTheta = 0,\\
  &\bigl(\nn\cdot\Div\TTheta
         + \partial_{\bt,\cE_T}
                    (\bt\cdot\TTheta\nn)\bigr)|_{\partial T}\in P^0(\cE_T),\quad
  \nn\cdot\TTheta\nn|_{\partial T} \in P^0(\cE_T) \}
\end{split}
\end{align}
for $T\in\cT$.
Here, $\partial_{\bt,\cE_T}$ denotes the tangential derivative operator that is taken piecewise
on the edges of $\partial T$, cf.~\cite[Remark~7]{FuehrerHN_UFK}.
The space $U_{\trddiv{T}}$ has the following degrees of freedom,
\begin{subequations}\label{eq:dofDDlocal}
\begin{align}
  &\alpha_E := \dual{\nn\cdot\Div\TTheta + \partial_\bt (\bt\cdot\TTheta\nn)}{1}_E
  \hspace{-7em}&&(E\in\cE_T),\label{eq:dofDDlocal_a}\\
  &\beta_E := \dual{\nn\cdot\TTheta\nn}{1}_E
  &&(E\in\cE_T),\label{eq:dofDDlocal_b}\\
  &\gamma_e := \jjump{\TTheta}_{\partial T}(e)
  &&(e\in \cN_T).\label{eq:dofDDlocal_c}
\end{align}
\end{subequations}
Here, $\jjump{\TTheta}_{\partial T}(e)$ denotes the jump of the trace
$\bt\cdot\TTheta\nn|_{\partial T}$ at the vertex $e$ in mathematically positive orientation.
Now, the corresponding global space is
\begin{align*}
  U_{\trddiv\cT}
  :=
  \{\TTheta\in \HdDiv{\Omega};\; \TTheta|_{T}\in U_{\trddiv{T}}\ \forall T\in\cT\}.
\end{align*}
According to \cite[Lemma 17]{FuehrerHN_UFK}, it has the degrees of freedom
\begin{align}
  \dual{\nn\cdot\Div\TTheta + \partial_\bt (\bt\cdot\TTheta\nn)}{1}_E
  &\quad (E\in\cE),
  \nonumber\\
  \dual{\nn\cdot\TTheta\nn}{1}_E
  &\quad (E\in\cE),
  \nonumber\\
  \jjump{\TTheta}_{\partial T}(e)
  &\quad (e\in \cN_T,\  T\in\cT),
  \nonumber\\
  \text{subject to }
  \sum_{T\in \omega(e)} \jjump{\TTheta}_{\partial T}(e) = 0
  &\quad\forall e\in\cN_0,
  \label{eq:dofDDglobal:d}
\end{align}
that is, its dimension is $\#\cE+\#\cE+3\#\cT-\#\cN_0$.
The set $\omega(e)$ consists of the elements $T\in\cT$ which have $e$ as a vertex.
The constraints~\eqref{eq:dofDDglobal:d} can be implemented by using Lagrangian multipliers.
Now, for the approximation of $\tQ\in\bH^{-3/2,-1/2}(\cS)$, we use the trace space
\begin{align*}
  \widehat Q_{\cS} := \traceDD{} (U_{\trddiv\cT}).
\end{align*}
It has the same degrees of freedom as $U_{\trddiv\cT}$, cf.~\cite{FuehrerHN_UFK}.

Eventually, our discrete subspace $\UU_h\subset\UU$ for the DPG approximation is
\begin{equation} \label{Uh}
  \UU_h := P^0(\cT) \times \bP^0(\cT) \times \PP^{0,s}(\cT) \times \widehat U_\cS \times \widehat Q_\cS.
\end{equation}
By \cite[Theorem~19]{FuehrerHN_UFK}, this space yields an approximation order $O(h)$ for sufficiently
smooth solutions.

\paragraph{Test space and quasi-optimal convergence.}

To define the fully discrete DPG scheme \eqref{DPG_discrete}, it remains to select a discrete test space
$\VV_h\subset\VV$ that allows for the construction of a Fortin operator. We select piecewise polynomial
spaces of degrees three and four,
\begin{equation} \label{Vh}
   \VV_h := P^3(\cT) \times \PP^{4,s}(\cT)\times \bP^3(\cT).
\end{equation}
This selection guarantees the well-posedness and quasi-optimal convergence of the discrete scheme.

\begin{theorem} \label{thm_DPG_discrete}
With the selection \eqref{Uh} and \eqref{Vh} of $\UU_h$ and $\VV_h$, respectively, the
scheme \eqref{DPG_discrete} is well posed and converges quasi-optimally,
\[
   \|\uu-\uu_h\|_\UU \lesssim \|\uu-\bw\|_\UU \quad\forall\bw\in\UU_h
\]
with $\uu_h$ denoting the solution of \eqref{DPG_discrete}.
\end{theorem}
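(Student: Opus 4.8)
The plan is to invoke Lemma~\ref{la_Fortin}, so the entire task reduces to constructing a Fortin operator $\Pi:\;\VV\to\VV_h$ satisfying the two conditions in \eqref{Fortin} with a mesh-independent constant $C_\Pi$. Since $\VV=H^2(\cT)\times\HDivdiv{\cT}$ is a product space over the mesh and $\VV_h=P^3(\cT)\times\PP^{4,s}(\cT)\times\bP^3(\cT)$, I would build $\Pi$ element by element, $\Pi=\prod_{T\in\cT}\Pi_T$ with $\Pi_T$ acting on $(z,\XXi,\btau)|_T$, and then transport a single reference-element construction to an arbitrary physical triangle $T$ by the transformations developed in \S\ref{sec_trafo}: the Piola transformation for the vector component $\btau$ and the Piola--Kirchhoff (symmetrized Piola) transformation for the symmetric tensor component $\XXi$, while $z$ is mapped by the usual pull-back. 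The point of these transformations is that they preserve the relevant differential structure ($\div$, $\Div$, and $\Grad\grad$ up to Jacobian factors) and the symmetry of $\XXi$, so that boundedness on the reference element, together with shape-regularity, yields the mesh-independent bound (b); this is where \S\ref{sec_basis} on discrete spaces and basis functions is used.

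The orthogonality condition (a), $b(\ww,\vv-\Pi\vv)=0$ for all $\ww\in\UU_h$ and $\vv\in\VV$, must be read off the form \eqref{b}. Testing against $\ww=(u_h,\btheta_h,\MM_h,\tu_h,\tQ_h)\in\UU_h$ with $\UU_h$ given by \eqref{Uh}, the condition splits into: $\vdual{u_h}{\div(\btau-\Pi\btau)}_\cT=0$ for all $u_h\in P^0(\cT)$; $\vdual{\btheta_h}{(\btau-\Div\XXi)-\Pi(\btau-\Div\XXi)}_\cT=0$ for all $\btheta_h\in\bP^0(\cT)$; $\vdual{\MM_h}{\cCinv(\XXi-\Pi\XXi)+\Grad\grad(z-\Pi z)}_\cT=0$ for all $\MM_h\in\PP^{0,s}(\cT)$; and the two trace conditions $\dual{\tu_h}{(\XXi-\Pi\XXi,\btau-\Pi\btau)}_\cS=0$ and $\dual{\tQ_h}{z-\Pi z}_\cS=0$ for all $\tu_h\in\widehat U_\cS$, $\tQ_h\in\widehat Q_\cS$. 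Because each of these is a local constraint against a finite set of test functionals (element-wise moments against constants and, via the trace duality pairings \eqref{dualityGG2} and \eqref{dualityDD}, edge- and vertex-type functionals dual to the degrees of freedom \eqref{eq:dofDDlocal} and those of $\widehat U_T$), I would set up on the reference element a mixed linear system whose unknowns are the coefficients of $\Pi_T\vv$ in the $P^3$/$\PP^{4,s}$/$\bP^3$ bases and whose equations are precisely these moment conditions. Following Nagaraj \emph{et al.}~\cite{NagarajPD_17_CDF}, this system has a naturally invertible diagonal block (the Gram matrix of the part of the test space not constrained), and well-posedness then hinges on the injectivity of the off-diagonal coupling block.

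The main obstacle is exactly this injectivity check: one must verify that the reference-element moment functionals, restricted to the chosen polynomial test spaces $P^3\times\PP^{4,s}\times\bP^3$, are linearly independent in the appropriate sense, i.e.\ that the local Fortin problem is solvable for every right-hand side. The degrees of three and four in \eqref{Vh} are chosen precisely so that the dimension count works out and this block is injective; as the excerpt states, this is to be verified "directly in one case and by MATLAB calculations in the other two." For the present theorem (the formulation with $\btheta$), the relevant coupled space is $\HDivdiv{\cT}$, so I expect the verification here to be the MATLAB-assisted one: assemble the off-diagonal matrix on the reference triangle and check its rank. Once injectivity is established, the reference-element operator $\widehat\Pi$ exists and is bounded by a fixed constant; pulling back through the Piola and Piola--Kirchhoff transformations and using shape-regularity gives (b) with mesh-independent $C_\Pi$, while (a) holds by construction since the moment conditions were built in. Feeding this $\Pi$ into Lemma~\ref{la_Fortin} yields the well-posedness of \eqref{DPG_discrete} and the quasi-optimal estimate $\|\uu-\uu_h\|_\UU\lesssim\|\uu-\bw\|_\UU$, completing the proof.
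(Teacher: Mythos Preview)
Your proposal is correct and follows the same overall strategy as the paper: reduce to Lemma~\ref{la_Fortin}, build the Fortin operator element-wise on the reference triangle, transport via the Piola and Piola--Kirchhoff transformations of \S\ref{sec_trafo}, and verify injectivity of the off-diagonal block in the mixed system \`a la Nagaraj \emph{et al.}

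Two differences in execution are worth noting. First, the paper observes that because $\cC$ is a self-adjoint isomorphism on $\PP^{0,s}(\cT)$, the $\MM_h$-constraint you wrote as a single coupled condition on $\cCinv(\XXi-\Pi\XXi)+\Grad\grad(z-\Pi z)$ actually decouples into two independent conditions, one on $\Grad\grad(z-\Pi z)$ and one on $\XXi-\Pi\XXi$. This lets the paper build $\Pi$ as two \emph{independent} operators, $\projGG$ on $H^2(\cT)$ and $\projD$ on $\HDivdiv{\cT}$, rather than a single coupled operator on $\VV$; the two injectivity checks are then smaller and separate. Second, while the mixed-system approach you describe is exactly how the paper handles $\projD$ (Lemma~\ref{la_auxD}, with the MATLAB-assisted rank check), the paper treats $\projGG$ differently: it constructs an explicit dual basis to $Q_T$ inside $P^3(T)$ by inverting a concrete $9\times 9$ matrix (this is the ``direct'' case mentioned in the introduction), and then composes with a kernel correction $z\mapsto z-z_{\ker}$ to obtain uniform boundedness in the $H^2$-norm (Lemmas~\ref{lem:auxGG} and~\ref{la_projGG}). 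Both routes work; the paper's decoupling just keeps each piece more manageable.
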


\begin{proof}
By Lemma~\ref{la_Fortin}, it is enough to construct a Fortin operator $\Pi:\;\VV\to\VV_h$
that is bounded and satisfies
\(
   b(\ww,\vv-\Pi(\vv)) = 0
\)
$\forall \ww\in\UU_h$. Constructing $\Pi$ component-wise, 
\[
   \Pi\vv=(\projGG(z),\projD(\XXi,\btau))\quad\text{for}\
   \vv=\bigl(z,(\XXi,\btau)\bigr)\in H^2(\cT)\times\HDivdiv{\cT},
\]
it is enough to show that $\projGG:\;H^2(\cT)\to P^3(\cT)$ and
$\projD=(\projDQ,\projDtau):\;\HDivdiv{\cT}\to \PP^{4,s}(\cT)\times\bP^3(\cT)$ are bounded and satisfy
\begin{subequations} \label{pf_DPG_discrete_1}
\begin{alignat}{2}
   &\dual{\tQ_h}{z}_\cS = \dual{\tQ_h}{\projGG(z)}_\cS \quad&& \forall\tQ_h\in \widehat Q_\cS,\\
   &\vdual{\MM_h}{\Grad\grad z}_\cT = \vdual{\MM_h}{\Grad\grad \projGG(z)}_\cT
   \quad&& \forall \MM_h\in \PP^{0,s}(\cT)
\end{alignat}
\end{subequations}
for any $z\in H^2(\cT)$, and, since $\cC$ is self-adjoint and maps $\PP^{0,s}(\cT)\to \PP^{0,s}(\cT)$,
\begin{alignat}{2}
   &\dual{\tu_h}{(\XXi,\btau)}_\cS = \dual{\tu_h}{\projD(\XXi,\btau)}_\cS
   \quad&& \forall \tu_h\in \widehat U_\cS,\nonumber\\
   &\vdual{\MM_h}{\XXi}_\cT = \vdual{\MM_h}{\projDQ(\XXi,\btau)}_\cT
   \quad&& \forall \MM_h\in \PP^{0,s}(\cT),\nonumber\\
   \label{pf_DPG_discrete_2}
   &\vdual{\btheta_h}{\btau-\Div\XXi}_\cT = \vdual{\btheta_h}{\projDtau(\XXi,\btau)-\Div\projDQ(\XXi,\btau)}_\cT
   \quad&& \forall \btheta_h\in \bP^0(\cT),\\
   &\vdual{u_h}{\div\btau}_\cT = \vdual{u_h}{\div\projDtau(\XXi,\btau)}_\cT
   \quad&& \forall u_h\in P^0(\cT)
   \label{pf_DPG_discrete_3}
\end{alignat}
for any $(\XXi,\btau)\in\HDivdiv{\cT}$.
By definition of the bilinear form $b(u,\btheta,\MM,\tu,\tQ;z,\XXi,\btau)$, the required
orthogonality then follows. Now, the operators $\projGG$ and $\projD$ will be constructed in
\S\S\ref{sec_projGG} and~\ref{sec_projD} below. Specifically, the respective mapping and
orthogonality properties are shown by Lemmas~\ref{la_projGG} and \ref{la_projD}.
\end{proof}

\subsection{Remark on the fully discrete scheme without gradient variable} \label{sec_DPG_discrete2}

In Section~\ref{sec_projDD} below we also construct a Fortin operator $\projDD$ for the space
$\HdDiv{\cT}$. It ensures that the lowest-order DPG scheme from \cite{FuehrerHN_UFK}
(for the Kirchhoff--Love plate bending problem without unknown $\btheta=\grad u$)
is well posed and converges quasi-optimally when selecting the discrete test space
\[
   \wilde\VV_h=P^3(\cT)\times \PP^{4,s}(\cT) \subset \wilde\VV := H^2(\cT)\times\HdDiv{\cT}.
\]
The numerical results in \cite{FuehrerHN_UFK} suggest that the smaller discrete space
\(
   P^3(\cT)\times \PP^{2,s}(\cT)
\)
is sufficient for the considered examples to guarantee discrete stability.

To be more specific let us recall the bilinear form of the variational formulation
 from \cite{FuehrerHN_UFK}. It is
\begin{align*}
   \tilde b(u,\MM,\tu,\tQ;z,\TTheta) :=
   &\vdual{\MM}{\pwGrad\pwgrad z+\cCinv\TTheta}
   +\vdual{u}{\pwdiv\pwDiv\TTheta}
   + \dual{\tQ}{z}_\cS - \dual{\tu}{\TTheta}_\cS,
\end{align*}
with $u$, $\MM$, $\tu$ and $\tQ$ as in this paper,
and test functions $(z,\TTheta)$ are taken in $\wilde\VV=H^2(\cT)\times\HdDiv{\cT}$.
Now, as in the proof of Theorem~\ref{thm_DPG_discrete}, one sees that the fully discrete
DPG scheme in this case, with approximation space
\[
  \wilde\UU_h := P^0(\cT) \times \PP^{0,s}(\cT) \times \widehat U_\cS \times \widehat Q_\cS
\]
and discrete test space $\wilde\VV_h$ as specified before, is well posed and converges quasi-optimally if there
is a Fortin operator $\wilde\Pi:\;\wilde\VV\to\wilde\VV_h$ that is uniformly continuous and
satisfies $\tilde b(\wilde\uu;\wilde\vv-\wilde\Pi\wilde\vv)=0$ for any
$\wilde\vv=(z,\TTheta)\in\wilde\VV$ and $\wilde\uu\in\wilde\UU_h$. Using the Fortin operator
$\projDD:\;\HdDiv{\cT}\to\wilde\VV_h$ from Section~\ref{sec_projDD} we define
$\wilde\Pi:=(\projGG,\projDD)$ and see that it satisfies the required orthogonality properties.
Indeed, one set of relations is satisfied by \eqref{pf_DPG_discrete_1}, and the remaining
relations (again using that $\cC$ induces a self-adjoint isomorphism
$\PP^{0,s}(\cT)\to \PP^{0,s}(\cT)$)
\begin{alignat}{2}
   &\dual{\tu_h}{\TTheta}_\cS = \dual{\tu_h}{\projDD\TTheta}_\cS
   \quad&& \forall \tu_h\in \widehat U_\cS,\nonumber\\
   &\vdual{\MM_h}{\TTheta}_\cT = \vdual{\MM_h}{\projDD\TTheta}_\cT
   \quad&& \forall \MM_h\in \PP^{0,s}(\cT),\nonumber\\
   &\vdual{u_h}{\div\Div\TTheta}_\cT = \vdual{u_h}{\div\Div\projDD\TTheta}_\cT
   \quad&& \forall u_h\in P^0(\cT)
   \label{orth_Fortin_dDiv_u}
\end{alignat}
for any $\TTheta\in\HdDiv{\cT}$ hold by Lemma~\ref{la_projDD} in Section~\ref{sec_projDD}.
Both components $\projGG$ and $\projDD$ of $\wilde\Pi$ are also uniformly continuous
by Lemmas~\ref{la_projGG} and~\ref{la_projDD}, respectively.

\section{Analysis of the adjoint problem and proofs of Theorems~\ref{thm_stab},\ref{thm_DPG}}
\label{sec_adj}

The well-posedness of the ultraweak formulation \eqref{VF} is equivalent to that of its
adjoint problem. In order to formulate this problem we have to identify the
functionals that are induced by the skeleton terms $\dual{\tu}{(\XXi,\btau)}_\cS$
on $(\XXi,\btau)\in\HDivdiv{\cT}$ and $\dual{\tQ}{z}$ on $z\in H^2(\cT)$.

As we have seen in \cite{FuehrerHN_UFK}, every element $\tQ$ of the trace space
$\bH^{-3/2,-1/2}(\cS)=\traceDD{}\bigl(\HdDiv{\Omega}\bigr)$
assigns a ``jump'' to $z\in H^2(\cT)$ through \eqref{dualityDD}, i.e.,
\(
   \dual{\tQ}{z}_\cS = \dual{\traceDD{}(\TTheta)}{z}_\cS
\)
with $\TTheta\in\HdDiv{\Omega}$ such that $\traceDD{}(\TTheta)=\tQ$.
The notation from \cite{FuehrerHN_UFK} is
\begin{align*}
   \jump{\cdot,\pwgrad\,\cdot}:\;
   \left\{\begin{array}{cll}
      H^2(\cT) & \to & \Bigl(\bH^{-3/2,-1/2}(\cS)\Bigr)'\\
      z        & \mapsto & \jump{z,\pwgrad z}(\tQ) := \dual{\tQ}{z}_\cS
   \end{array}\right.
\end{align*}
with operator norm denoted by $\|\cdot\|_{(-3/2,-1/2,\cS)'}$.

In \cite{FuehrerHN_UFK} we have also seen that every element
$\tv\in\bH^{3/2,1/2}_{00}(\cS)=\traceGG{}\bigl(H^2_0(\Omega)\bigr)$ assigns a value to the
``jump'' of $\TTheta\in\HdDiv{\cT}$.
In the formulation under consideration, however, the variable $\tu\in\bH^{3/2,1/2}_{00}(\cS)$
is acting on function pairs $(\XXi,\btau)\in\HDivdiv{\cT}$ through the duality \eqref{dualityGG2}.
We therefore define a new jump functional by
\begin{align} \label{jump_divDiv}
   \jump{(\cdot)\nn,\nn\cdot(\cdot)}:\;
   \left\{\begin{array}{cll}
      \HDivdiv{\cT} & \to & \Bigl(\bH^{3/2,1/2}_{00}(\cS)\Bigr)'\\
      (\XXi,\btau)  & \mapsto & \jump{\XXi\nn,\nn\cdot\btau}(\tv) := \dual{\tv}{(\XXi,\btau)}_\cS
   \end{array}\right.
\end{align}
measured in the operator norm $\|\cdot\|_{(\trggrad{\cS,0})'}$.

Now, considering the duality pairings appearing in the bilinear form \eqref{b},
the adjoint problem of \eqref{VF} is as follows.

\emph{Find $z\in H^2(\cT)$ and $(\XXi,\btau)\in\HDivdiv{\cT}$ such that}
\begin{subequations} \label{adj}
\begin{alignat}{2}
    \pwdiv\btau                          &= g    && \ \in L_2(\Omega)\label{a1},\\
    \btau - \pwDiv\XXi                   &= \bl  && \ \in \bL_2(\Omega)\label{a2},\\
    \cCinv\XXi + \pwGrad\pwgrad z        &= \HH  && \ \in \LL_2^s(\Omega)\label{a3},\\
    \jump{\XXi\nn,\nn\cdot\btau}         &= \br  && \ \in \Bigl(\bH^{3/2,1/2}_{00}(\cS)\Bigr)'\label{jtauXi},\\
    \jump{z,\pwgrad z}          &= \bj  && \ \in \Bigl(\bH^{-3/2,-1/2}(\cS)\Bigr)'\label{jz}.
\end{alignat}
\end{subequations}
Here, the differential operators with index $\cT$ refer to the operators acting on the
corresponding product spaces (they are taken piecewise with respect to $\cT$).

To prove the well-posedness of \eqref{adj} we proceed as in \cite{FuehrerHN_UFK} and study its reduced form.
We first consider linear combinations of the relations, after testing appropriately.
Specifically, for $\deltaz\in H^2_0(\Omega)$, we test \eqref{a1}--\eqref{a3}, respectively, by
$\deltaz$, $\grad\deltaz$, and $-\cC\Grad\grad\deltaz$. Summation yields
\begin{align} \label{sum_adj}
   \vdual{g}{\deltaz} + \vdual{\bl}{\grad\deltaz} - \vdual{\cC\HH}{\Grad\grad\deltaz}
   =
     \vdual{\pwdiv\btau}{\deltaz} + \vdual{\btau-\pwDiv\XXi}{\grad\deltaz} 
   - \vdual{\XXi+\cC\pwGrad\pwgrad z}{\Grad\grad\deltaz}
\end{align}
Now, by \eqref{trGG2}, \eqref{dualityGG2}, \eqref{jump_divDiv}, and \eqref{jtauXi},
\begin{align*}
   &\vdual{\pwdiv\btau}{\deltaz} + \vdual{\btau-\pwDiv\XXi}{\grad\deltaz}
   - \vdual{\XXi}{\Grad\grad\deltaz}
   =
   \dual{\ttraceGG{}(\deltaz)}{(\XXi,\btau)}_\cS
   \\
   &=
   \jump{\XXi\nn,\nn\cdot\btau}(\ttraceGG{}(\deltaz))
   =
   \br(\ttraceGG{}(\deltaz)).
\end{align*}
Therefore, from \eqref{sum_adj} we obtain the following variational form of the reduced adjoint problem.

\emph{Given $g\in L_2(\Omega)$, $\bl\in\bL_2(\Omega)$, $\HH\in\LL_2^s(\Omega)$,
$\br\in\bigl(\bH^{3/2,1/2}_{00}(\cS)\bigr)'$, and $\bj\in\Bigl(\bH^{-3/2,-1/2}(\cS)\Bigr)'$
find $z\in H^2(\cT)$ such that}
\begin{subequations} \label{saddle}
\begin{alignat}{2}
   \label{s1}
   \vdual{\cC\pwGrad\pwgrad z}{\Grad\grad\deltaz}
   &=
   \vdual{\cC\HH}{\Grad\grad\deltaz} - \vdual{\bl}{\grad\deltaz} - \vdual{g}{\deltaz}
   + \br(\ttraceGG{}(\deltaz))
   &&\quad\forall\deltaz\in H^2_0(\Omega),\\
   \label{s2}
   \jump{z,\pwgrad z}(\bdeltaq) &= \bj(\bdeltaq)
   &&\quad\forall\bdeltaq\in \bH^{-3/2,-1/2}(\cS).
\end{alignat}
\end{subequations}

At the heart of our analysis is the well-posedness of \eqref{saddle} whose proof requires
some tools developed in \cite{FuehrerHN_UFK}. These tools are recalled next.

\begin{prop} \label{prop_tools} {\rm\cite[Propositions 8(i), 10]{FuehrerHN_UFK}}\\
(i) For $z\in H^2(\cT)$ it holds
\[
   z\in H^2_0(\Omega)\quad\Leftrightarrow\quad
   \dual{\bq}{z}_\cS=0 \quad\forall\bq\in \bH^{-3/2,-1/2}(\cS).
\]
(ii) It holds
\[
   \|z\|_{2,\cT}
   \lesssim
   \sup_{\deltaz\in H^2_0(\Omega)\setminus\{0\}}
   \frac {\vdual{\pwGrad\pwgrad z}{\cC\Grad\grad\deltaz}}{\|\cC\Grad\grad\deltaz\|}
   +
   \|\jump{z,\pwgrad z}\|_{(-3/2,-1/2,\cS)'}
   \quad\forall z\in H^2(\cT)
\]
with an implicit constant that is independent of the mesh $\cT$.
\end{prop}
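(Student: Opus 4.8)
\medskip
\noindent\emph{Proof plan.} Both assertions are taken from \cite{FuehrerHN_UFK}; here is how I would establish them.

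For part~(i), the direction ``$\Rightarrow$'' is a global integration by parts: if $z\in H^2_0(\Omega)$ then $\Grad\grad z$ has no interelement jumps, so $\vdual{\TTheta}{\Grad\grad z}_\cT=\vdual{\TTheta}{\Grad\grad z}$, and the identity $\vdual{\div\Div\TTheta}{z}=\vdual{\TTheta}{\Grad\grad z}$ holds because all boundary contributions on $\Gamma$ carry $z|_\Gamma=0$ or $\nn\cdot\grad z|_\Gamma=0$ --- first for smooth symmetric $\TTheta$, then for all $\TTheta$ by density in $\HdDiv{\Omega}$ and continuity of both sides; hence $\dual{\bq}{z}_\cS=\dual{\traceDD{}(\TTheta)}{z}_\cS=0$ by \eqref{ipDD}. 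For ``$\Leftarrow$'', \eqref{ipDD} turns the hypothesis into $\vdual{\div\Div\TTheta}{z}=\vdual{\TTheta}{\pwGrad\pwgrad z}$ for every $\TTheta\in\HdDiv{\Omega}$. Testing with smooth $\TTheta$ and integrating by parts elementwise, the volume terms cancel and only skeleton contributions remain: interior edges carry the jumps of $z$ and of $\grad z$, boundary edges carry $z|_\Gamma$ and $\nn\cdot\grad z|_\Gamma$. Localizing $\TTheta$ near a single edge and prescribing the relevant traces of $\TTheta$ and $\Div\TTheta$ there forces each of these quantities to vanish; thus $z$ and $\grad z$ are single valued across $\cS$ and vanish on $\Gamma$, i.e.\ $z\in H^2_0(\Omega)$.

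For part~(ii), I would split the right-hand side into a conforming stability bound and a bounded conforming correction. \emph{(a) Conforming stability.} For $w\in H^2_0(\Omega)$ one has $\pwGrad\pwgrad w=\Grad\grad w$ and $\jump{w,\pwgrad w}=0$ by part~(i), so (ii) reduces to $\|w\|_{2}\lesssim\sup_{0\not=\deltaz\in H^2_0(\Omega)}\vdual{\Grad\grad w}{\cC\Grad\grad\deltaz}/\|\cC\Grad\grad\deltaz\|$; this is the inf--sup condition for the symmetric form $\vdual{\,\cdot\,}{\cC\,\cdot\,}$ on Hessians of $H^2_0(\Omega)$-functions, which follows from the mapping and coercivity properties of $\cC$ together with the Poincaré-type equivalence $\|w\|_{2}\simeq\|\Grad\grad w\|$ on $H^2_0(\Omega)$. \emph{(b) Bounded correction.} For general $z\in H^2(\cT)$ one needs $w\in H^2_0(\Omega)$ with $\|z-w\|_{2,\cT}\lesssim\|\jump{z,\pwgrad z}\|_{(-3/2,-1/2,\cS)'}$. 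Granting (a) and (b), choose such a $w$ and estimate $\|z\|_{2,\cT}\le\|z-w\|_{2,\cT}+\|w\|_{2}$: the first summand is controlled by (b); for the second, write $\Grad\grad w=\pwGrad\pwgrad z-\pwGrad\pwgrad(z-w)$, so that $\vdual{\Grad\grad w}{\cC\Grad\grad\deltaz}\le\vdual{\pwGrad\pwgrad z}{\cC\Grad\grad\deltaz}+\|z-w\|_{2,\cT}\,\|\cC\Grad\grad\deltaz\|$, and after dividing by $\|\cC\Grad\grad\deltaz\|$, taking the supremum over $\deltaz$, and invoking (a) and (b) once more, $\|w\|_{2}$ is bounded by the asserted right-hand side as well.

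The hard part is (b). One inequality comes for free: since $\dual{\bq}{z}_\cS=\dual{\bq}{z-w}_\cS$ for every $w\in H^2_0(\Omega)$ by part~(i), the definition \eqref{def_norm_trDD} of the trace norm gives $\|\jump{z,\pwgrad z}\|_{(-3/2,-1/2,\cS)'}\le\inf_{w\in H^2_0(\Omega)}\|z-w\|_{2,\cT}$. The reverse (near-best) estimate is the crux; I would obtain it from the trace machinery of \cite{FuehrerHN_UFK}, namely the surjectivity of $\traceDD{}\colon\HdDiv{\Omega}\to\bH^{-3/2,-1/2}(\cS)$ together with a closed-range argument, showing that the jump operator induces an isomorphism between the quotient $H^2(\cT)/H^2_0(\Omega)$ (normed by $\inf_{w\in H^2_0(\Omega)}\|\cdot-w\|_{2,\cT}$) and its image. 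A fully equivalent alternative is a compactness/contradiction argument: a sequence $z_n\in H^2(\cT)$ with $\|z_n\|_{2,\cT}=1$ whose right-hand side tends to $0$ has, by the elementwise Rellich theorem, a subsequence converging strongly in $L_2(\Omega)$ and weakly in $H^2(\cT)$; weak-$*$ lower semicontinuity of the jump norm together with part~(i) forces the weak limit into $H^2_0(\Omega)$, vanishing of the first term then gives $\Grad\grad z=0$ hence $z=0$, contradicting $\|z_n\|_{2,\cT}=1$ --- provided one also checks that the nonconforming part of $\pwGrad\pwgrad z_n$ is controlled by the right-hand side, which is again where the estimates of \cite{FuehrerHN_UFK} enter.
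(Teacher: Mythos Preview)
The paper does not prove this proposition at all; it is stated as a quotation of \cite[Propositions~8(i),~10]{FuehrerHN_UFK} and used as a black box. So there is no ``paper's own proof'' to compare with, and your plan is a reconstruction of the cited results. Your outline for part~(i) and for the conforming step~(a) in part~(ii) is the standard one and is fine. For the correction step~(b) your closed-range/duality route is indeed what underlies the result in \cite{FuehrerHN_UFK}: there one shows the \emph{equality} $\|\jump{z,\pwgrad z}\|_{(-3/2,-1/2,\cS)'}=\inf_{w\in H^2_0(\Omega)}\|z-w\|_{2,\cT}$ by proving that the trace norm $\|\tQ\|_{-3/2,-1/2,\cS}$ coincides with the minimum-extension norm from $\HdDiv{\Omega}$ (via a right inverse of $\traceDD{}$ built from a biharmonic problem), and then dualizing; once that equality is in hand your splitting $\|z\|_{2,\cT}\le\|z-w\|_{2,\cT}+\|w\|_2$ goes through exactly as you wrote.

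The part that does \emph{not} work is your ``fully equivalent alternative'' by compactness/contradiction. The statement requires a constant that is uniform over all meshes $\cT$, whereas a Rellich-based contradiction argument is carried out on a \emph{fixed} mesh and produces a constant that a priori depends on $\cT$ (the embedding $H^2(\cT)\hookrightarrow L_2(\Omega)$ is compact only elementwise, with no uniformity across meshes). Moreover, even on a fixed mesh the argument as written is incomplete: from weak convergence $z_n\rightharpoonup z$ in $H^2(\cT)$ and $z\in H^2_0(\Omega)$ with $\Grad\grad z=0$ you get $z=0$, but this does not contradict $\|z_n\|_{2,\cT}=1$ without upgrading the convergence of $\pwGrad\pwgrad z_n$ to strong---which is precisely the missing control you flag at the end. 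So drop the compactness alternative and rely on the duality/closed-range argument, which is what \cite{FuehrerHN_UFK} actually establishes.
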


Now we can state and prove the well-posedness of the reduced adjoint problem.

\begin{lemma} \label{la_saddle}
Problem \eqref{saddle} has a unique solution $z\in H^2(\cT)$. It satisfies
\[
   \|z\|_{2,\cT}
   \lesssim
   \|g\| + \|\bl\| + \|\HH\| + \|\br\|_{(\trggrad{\cS,0})'} + \|\bj\|_{(-3/2,-1/2,\cS)'}.
\]
\end{lemma}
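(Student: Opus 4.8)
The plan is to prove Lemma~\ref{la_saddle} by casting \eqref{saddle} as a standard mixed (saddle-point) problem on $H^2(\cT)$ with a constraint into $\bH^{-3/2,-1/2}(\cS)$, and then verifying the two Brezzi conditions using the tools in Proposition~\ref{prop_tools}. Write $a(z,\deltaz):=\vdual{\cC\pwGrad\pwgrad z}{\Grad\grad\deltaz}$ on $H^2(\cT)\times H^2_0(\Omega)$ and $c(z,\bq):=\jump{z,\pwgrad z}(\bq)$ on $H^2(\cT)\times\bH^{-3/2,-1/2}(\cS)$. Then \eqref{saddle} reads: find $z\in H^2(\cT)$ with $a(z,\deltaz)=F(\deltaz)$ for all $\deltaz\in H^2_0(\Omega)$ and $c(z,\bq)=\bj(\bq)$ for all $\bq\in\bH^{-3/2,-1/2}(\cS)$, where $F(\deltaz):=\vdual{\cC\HH}{\Grad\grad\deltaz}-\vdual{\bl}{\grad\deltaz}-\vdual{g}{\deltaz}+\br(\ttraceGG{}(\deltaz))$. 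The first three terms of $F$ are bounded by $(\|\HH\|+\|\bl\|+\|g\|)\|\deltaz\|_2$ (using boundedness of $\cC$ and Poincaré/Friedrichs on $H^2_0(\Omega)$), and the last term is bounded by $\|\br\|_{(\trggrad{\cS,0})'}\|\ttraceGG{}(\deltaz)\|_{\trggrad{0,\cS}}\le\|\br\|_{(\trggrad{\cS,0})'}\|\deltaz\|_2$, since $\|\ttraceGG{}(\deltaz)\|_{\trggrad{0,\cS}}=\inf\{\|v\|_2;\,v\in H^2_0(\Omega),\,\traceGG{}(v)=\ttraceGG{}(\deltaz)\}\le\|\deltaz\|_2$. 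So $F\in\bigl(H^2_0(\Omega)\bigr)'$ with norm controlled by the right-hand-side data, and similarly $\bj\in\bigl(\bH^{-3/2,-1/2}(\cS)\bigr)'$ by hypothesis.

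Next I would verify the inf-sup (stability) condition on the kernel and the inf-sup condition for $c$. The kernel of $c$ is exactly $Z_0:=\{z\in H^2(\cT);\;\jump{z,\pwgrad z}(\bq)=0\ \forall\bq\in\bH^{-3/2,-1/2}(\cS)\}$, which by Proposition~\ref{prop_tools}(i) equals $H^2_0(\Omega)$. On this kernel, coercivity of $a$ is immediate: for $z\in H^2_0(\Omega)$, taking $\deltaz=z$ gives $a(z,z)=\vdual{\cC\Grad\grad z}{\Grad\grad z}\gtrsim\|\Grad\grad z\|^2=|z|_{2}^2\gtrsim\|z\|_2^2$ by the $\LL_2^s$-ellipticity of $\cC$ and the $H^2_0$-Friedrichs inequality; since on $H^2_0(\Omega)$ the broken norm $\|z\|_{2,\cT}$ coincides with $\|z\|_2$, this is the required kernel-coercivity. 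For the inf-sup condition on $c$, I need $\inf_{0\neq\bq}\sup_{0\neq z}\frac{\jump{z,\pwgrad z}(\bq)}{\|z\|_{2,\cT}\|\bq\|_{(-3/2,-1/2,\cS)'}}\gtrsim 1$; but this is essentially the definition of the norm $\|\cdot\|_{(-3/2,-1/2,\cS)'}$ as the operator norm of the map $z\mapsto\jump{z,\pwgrad z}$ — more precisely, I must check that $z\mapsto\jump{z,\pwgrad z}$ is surjective onto $\bigl(\bH^{-3/2,-1/2}(\cS)\bigr)'$ with a bounded right inverse, which again follows from the trace-space theory in \cite{FuehrerHN_UFK} (the trace operators are open maps onto their images, so the adjoint/jump maps are bounded below). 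I would cite the relevant lemma/proposition from \cite{FuehrerHN_UFK} for this surjectivity rather than reprove it.

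With the two Brezzi conditions in hand, the abstract saddle-point theory gives existence, uniqueness, and the a priori bound $\|z\|_{2,\cT}\lesssim\|F\|_{(H^2_0(\Omega))'}+\|\bj\|_{(-3/2,-1/2,\cS)'}$, and combining with the data bounds on $F$ above yields exactly the claimed estimate. Alternatively — and this is likely the cleaner route the authors intend — I would avoid invoking Brezzi directly and instead argue as follows: first solve the auxiliary problem for the "lifting" part, writing $z=z_0+w$ where $w\in H^2(\cT)$ is any fixed bounded lift with $\jump{w,\pwgrad w}=\bj$ (bounded by the surjectivity above, $\|w\|_{2,\cT}\lesssim\|\bj\|_{(-3/2,-1/2,\cS)'}$), reducing to finding $z_0\in H^2_0(\Omega)$ with $a(z_0,\deltaz)=F(\deltaz)-a(w,\deltaz)$ for all $\deltaz\in H^2_0(\Omega)$; this is uniquely solvable by Lax–Milgram (coercivity of $a$ on $H^2_0(\Omega)$ as above), with $\|z_0\|_2\lesssim\|F\|_{(H^2_0)'}+\|w\|_{2,\cT}$. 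Then $z=z_0+w$ solves \eqref{saddle}, and the full estimate follows. For the sharp bound on the full $\|z\|_{2,\cT}$-norm (not just on $z_0$ and $w$ separately), I would invoke Proposition~\ref{prop_tools}(ii): it gives $\|z\|_{2,\cT}\lesssim\sup_{\deltaz}\frac{\vdual{\pwGrad\pwgrad z}{\cC\Grad\grad\deltaz}}{\|\cC\Grad\grad\deltaz\|}+\|\jump{z,\pwgrad z}\|_{(-3/2,-1/2,\cS)'}$; the first supremum equals, via \eqref{s1}, $\sup_{\deltaz}\frac{\vdual{\cC\HH}{\Grad\grad\deltaz}-\vdual{\bl}{\grad\deltaz}-\vdual{g}{\deltaz}+\br(\ttraceGG{}(\deltaz))}{\|\cC\Grad\grad\deltaz\|}$, which is bounded by $\|\HH\|+\|\bl\|+\|g\|+\|\br\|_{(\trggrad{\cS,0})'}$ (using the data bounds on $F$ and, for the $\bl,g$ terms, Friedrichs on $H^2_0(\Omega)$ to pass from $\|\cC\Grad\grad\deltaz\|\simeq\|\deltaz\|_2$ to control of $\|\grad\deltaz\|,\|\deltaz\|$), while the second term equals $\|\bj\|_{(-3/2,-1/2,\cS)'}$ by \eqref{s2}. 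That directly produces the asserted estimate without separating $z$ into pieces.

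I expect the main obstacle to be the surjectivity/inf-sup statement for the jump map $z\mapsto\jump{z,\pwgrad z}$ onto $\bigl(\bH^{-3/2,-1/2}(\cS)\bigr)'$ — i.e., establishing that this operator is bounded below — together with the bookkeeping needed to interface the broken norm $\|\cdot\|_{2,\cT}$ with the quotient-type trace norms. Once that is granted (it should be available from \cite{FuehrerHN_UFK}, presumably from the same circle of results as Proposition~\ref{prop_tools}), the rest is a routine combination of Lax–Milgram on $H^2_0(\Omega)$ with the a priori estimate of Proposition~\ref{prop_tools}(ii); in particular the coercivity of $a$ and all data bounds are immediate from the ellipticity of $\cC$ and Friedrichs' inequality.
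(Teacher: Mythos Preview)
Your proposal is correct but organized differently from the paper. The paper combines \eqref{s1} and \eqref{s2} into a single Petrov--Galerkin problem $a(z;\deltaz,\bdeltaq)=l(\deltaz,\bdeltaq)$ with trial space $H^2(\cT)$ and test space $H^2_0(\Omega)\times\bH^{-3/2,-1/2}(\cS)$, and then verifies the Banach--Ne\v{c}as--Babu\v{s}ka conditions directly: adjoint injectivity (take $z:=\deltaz\in H^2_0(\Omega)$ so the jump term drops by Proposition~\ref{prop_tools}(i), conclude $\deltaz=0$, then $\bdeltaq=0$ from the norm definition) and the inf-sup over the test pair, which is read off from Proposition~\ref{prop_tools}(ii). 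No lifting is constructed and no Brezzi machinery is invoked.

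Your first framing as a standard Brezzi saddle-point is not quite the right abstraction, since there is no Lagrange-multiplier unknown here---the test space differs from the trial space. Your second route (lift $\bj$ to some $w\in H^2(\cT)$, then Lax--Milgram on the kernel $H^2_0(\Omega)$, then Proposition~\ref{prop_tools}(ii) for the sharp bound) does work, and your use of Proposition~\ref{prop_tools}(ii) for the estimate is exactly what the paper uses for its inf-sup. The surjectivity of the jump map $H^2(\cT)\to(\bH^{-3/2,-1/2}(\cS))'$ that you flag as the main obstacle is in fact a soft consequence of Proposition~\ref{prop_tools}(i) and reflexivity: since $\|\bq\|_{-3/2,-1/2,\cS}$ is by definition the $(H^2(\cT))'$-norm of $\dual{\bq}{\cdot}_\cS$, the space $\bH^{-3/2,-1/2}(\cS)$ embeds isometrically into $(H^2(\cT))'$ with annihilator $H^2_0(\Omega)$, so its dual is $H^2(\cT)/H^2_0(\Omega)$ via the jump. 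The paper's combined Petrov--Galerkin approach sidesteps this detour entirely; your approach buys a perhaps more familiar decomposition but at the cost of that auxiliary functional-analytic step.
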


\begin{proof}
With the appropriate tools at hand, the proof of this lemma is identical to the proof of
Lemma~13 in \cite{FuehrerHN_UFK}. For the convenience of the reader let us recall the principal steps.
Adding relations \eqref{s1}, \eqref{s2} we represent \eqref{saddle} with the notation
\[
   a(z;\deltaz,\bdeltaq) = l(\deltaz,\bdeltaq).
\]
The boundedness of the right-hand side functional is immediate by the involved dualities.
The boundedness of the bilinear form $a$ is also clear.
It remains to check the two inf-sup conditions.

(i) $a(z;\deltaz,\bdeltaq)=0$ $\forall z\in H^2(\cT)$ implies $\deltaz=0$ and $\bdeltaq=0$.
Indeed, selecting $z:=\deltaz\in H^2_0(\Omega)$ so that $\jump{z,\pwgrad z}(\bdeltaq)=0$
by Proposition~\ref{prop_tools}(i), this shows that $\|\Grad\grad\deltaz\|=0$
by the positive definiteness of $\cC$. Therefore, $\deltaz=0$.
Now, using that $\deltaz=0$, we have that
\[
   \jump{z,\pwgrad z}(\bdeltaq) = 0\quad\forall z\in H^2(\cT),
\]
that is, $\|\bdeltaq\|_{-3/2,-1/2,\cS}=0$, hence $\bdeltaq=0$, cf.~definition \eqref{def_norm_trDD} of the norm.

(ii) The inf-sup condition
\[
   \sup_{(\deltaz,\bdeltaq)\in H^2_0(\Omega)\times\bH^{-3/2,-1/2}(\cS)\setminus\{0\}}
   \frac {\vdual{\pwGrad\pwgrad z}{\cC\Grad\grad\deltaz} + \jump{z,\pwgrad z}(\bdeltaq)}
         {\bigl(\|\cC\Grad\grad\deltaz\|^2 + \|\bdeltaq\|_{-3/2,-1/2,\cS}^2\bigr)^{1/2}}
   \gtrsim
   \|z\|_{2,\cT}\quad\forall z\in H^2(\cT)
\]
follows by duality, Proposition~\ref{prop_tools}(ii) and the norm equivalence
$\|\cC\Grad\grad\deltaz\|\simeq\|\deltaz\|_2$ for $\deltaz\in H^2_0(\Omega)$.
This finishes the proof of the lemma.
\end{proof}

\begin{prop} \label{prop_adj}
For arbitrary $g\in L_2(\Omega)$, $\bl\in\bL_2(\Omega)$, $\HH\in\LL_2^s(\Omega)$,
$\br\in\bigl(\bH^{3/2,1/2}_{00}(\cS)\bigr)'$, and $\bj\in\Bigl(\bH^{-3/2,-1/2}(\cS)\Bigr)'$
the adjoint problem \eqref{adj} has a unique solution $(z,\XXi,\btau)\in\VV$. It satisfies
\[
   \|z\|_{2,\cT} + \|(\XXi,\btau)\|_{\Divdiv,\cT}
   \lesssim
   \|g\| + \|\bl\| + \|\HH\| + \|\br\|_{(\trggrad{\cS,0})'} + \|\bj\|_{(-3/2,-1/2,\cS)'}.
\]
\end{prop}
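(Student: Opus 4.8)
The plan is to deduce Proposition~\ref{prop_adj} from Lemma~\ref{la_saddle} by reconstructing the full adjoint solution $(z,\XXi,\btau)$ from the reduced solution $z$ and then verifying the claimed a priori bound. Recall that the passage from \eqref{adj} to \eqref{saddle} was obtained by testing \eqref{a1}--\eqref{a3} against $\deltaz\in H^2_0(\Omega)$, $\grad\deltaz$, and $-\cC\Grad\grad\deltaz$; the reduced problem \eqref{saddle} therefore retains exactly the information visible to such test functions plus the jump constraint \eqref{jz}. So the first step is to take the unique $z\in H^2(\cT)$ from Lemma~\ref{la_saddle} and \emph{define} the remaining unknowns by the pointwise (elementwise) relations that \eqref{adj} demands: set $\XXi:=\cC(\HH-\pwGrad\pwgrad z)\in\LL_2^s(\Omega)$ from \eqref{a3}, then $\btau:=\bl+\pwDiv\XXi$ should come from \eqref{a2} --- but $\pwDiv\XXi$ is not \emph{a priori} in $\bL_2$, which is precisely the regularity obstruction the combined space $\HDivdiv{\cT}$ was designed to handle. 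This is the step I expect to be the main obstacle, and I address it next.

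The resolution is that we do not need $\pwDiv\XXi$ separately; we only need the pair $(\XXi,\btau)$ to lie in $\HDivdiv{\cT}$, i.e.\ we need $\btau-\pwDiv\XXi\in\bL_2(\Omega)$ and $\pwdiv\btau\in L_2(\Omega)$. The first is forced by \eqref{a2}: reading \eqref{a2} as the \emph{definition} $\btau-\pwDiv\XXi:=\bl$ (as an identity of $\bL_2$ functions, interpreting $\pwDiv\XXi$ in the distributional sense on each element), we get $\btau-\pwDiv\XXi=\bl\in\bL_2(\Omega)$ directly, with $\|\btau-\pwDiv\XXi\|_\cT=\|\bl\|$. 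The quantity $\btau$ itself is then a well-defined distribution on each element; to see $\btau\in\HDivdiv{\cT}$ we must produce $\pwdiv\btau\in L_2(\Omega)$, and here we use relation \eqref{a1}, i.e.\ we want $\pwdiv\btau=g$. The consistency of all these elementwise definitions with the global reduced equation \eqref{s1} is what requires an argument: testing the (now defined) functions against $\deltaz\in H^2_0(\Omega)$ must reproduce \eqref{s1}, and testing against the trace duality must reproduce the jump relations \eqref{jtauXi}, \eqref{jz}. Concretely I would argue as in \cite[Lemma~13]{FuehrerHN_UFK}: because $z$ solves \eqref{s1} for \emph{all} $\deltaz\in H^2_0(\Omega)$, the distribution $\div\btau-g$ (a priori only in $H^{-2}$ piecewise) is orthogonal to all of $H^2_0$ on each element up to skeleton terms, and a density/duality argument together with the definition of the trace spaces upgrades this to $\pwdiv\btau=g\in L_2(\Omega)$ and fixes the jump data. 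The key point making this work is that $\bH^{3/2,1/2}_{00}(\cS)$ and $\bH^{-3/2,-1/2}(\cS)$ are defined precisely as the images of the trace operators $\ttraceGG{}$ and $\traceDD{}$, so any functional on the relevant test space that annihilates $H^2_0(\Omega)$ (resp.\ the divergence-free-type constraint) is represented by a skeleton trace, cf.\ Proposition~\ref{prop_tools}(i).

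Once $(z,\XXi,\btau)\in\VV$ is constructed and shown to solve \eqref{adj}, uniqueness follows because any solution of \eqref{adj} yields, by the same testing procedure that produced \eqref{saddle}, a solution of the reduced problem, which is unique by Lemma~\ref{la_saddle}; and $z$ then determines $\XXi$ and $\btau$ through \eqref{a3}, \eqref{a2}, \eqref{a1}. For the a priori estimate: Lemma~\ref{la_saddle} already gives $\|z\|_{2,\cT}\lesssim\|g\|+\|\bl\|+\|\HH\|+\|\br\|_{(\trggrad{\cS,0})'}+\|\bj\|_{(-3/2,-1/2,\cS)'}$. It remains to bound $\|(\XXi,\btau)\|_{\Divdiv,\cT}^2=\|\XXi\|^2+\|\btau-\pwDiv\XXi\|_\cT^2+\|\pwdiv\btau\|_\cT^2$. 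By \eqref{a3}, $\|\XXi\|\le\|\cC\|\,(\|\HH\|+\|\pwGrad\pwgrad z\|_\cT)\lesssim\|\HH\|+\|z\|_{2,\cT}$; by \eqref{a2}, $\|\btau-\pwDiv\XXi\|_\cT=\|\bl\|$; and by \eqref{a1}, $\|\pwdiv\btau\|_\cT=\|g\|$. Combining with the bound on $\|z\|_{2,\cT}$ yields the stated estimate with a mesh-independent constant, completing the proof.
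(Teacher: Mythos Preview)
Your proposal is correct and follows essentially the same route as the paper: take $z$ from Lemma~\ref{la_saddle}, define $\XXi$ by \eqref{a3} and $\btau$ by \eqref{a2}, verify \eqref{a1} and \eqref{jtauXi} from \eqref{s1}, and read off the bounds exactly as you do. The paper's proof is terser (it just says ``it is also easy to check that $\btau$ satisfies \eqref{a1} and $(\XXi,\btau)$ satisfies \eqref{jtauXi}''), but the underlying argument is the one you sketch.

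One small sharpening: your phrase ``orthogonal to all of $H^2_0$ on each element up to skeleton terms'' slightly obscures the mechanism. The clean way to get $\pwdiv\btau=g$ is to test \eqref{s1} with $\deltaz=\phi\in C^\infty_c(T)$ extended by zero to $\Omega$; then $\ttraceGG{}(\phi)=0$, so the skeleton term $\br(\ttraceGG{}(\phi))$ vanishes outright and you obtain $\langle\div\btau-g,\phi\rangle_T=0$ for all such $\phi$, hence $\div\btau|_T=g|_T\in L_2(T)$. Only afterwards, testing with a general $\deltaz\in H^2_0(\Omega)$ and using the now-established identities \eqref{a1}--\eqref{a3} together with definition \eqref{trGG2}, do the skeleton terms survive and yield \eqref{jtauXi}. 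No separate ``density/duality'' step or appeal to Proposition~\ref{prop_tools}(i) is needed for this direction.
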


\begin{proof}
By construction, the $z$-component of any solution $(z,\XXi,\btau)$ to the adjoint problem
\eqref{adj} satisfies the reduced adjoint problem \eqref{saddle},
which is uniquely solvable by Lemma~\ref{la_saddle}.
Then, $\XXi$ and $\btau$ are uniquely defined by \eqref{a3} and \eqref{a2}, respectively.
It is also easy to check that $\btau$ satisfies \eqref{a1} and
$(\XXi,\btau)$ satisfies \eqref{jtauXi}.

Finally, the bound for $\|z\|_{2,\cT}$ is provided by Lemma~\ref{la_saddle}.
Bounding the remaining norms is immediate:
$\|\XXi\|\le \|\cC\HH\| + \|\cC\Grad\grad z\|_\cT \lesssim \|\HH\| + \|\Grad\grad z\|_\cT$,
$\|\btau-\Div\XXi\|_\cT=\|\bl\|$, and $\|\div\btau\|_\cT=\|g\|$.
\end{proof}

\subsection{Proofs of Theorems~\ref{thm_stab},~\ref{thm_DPG}} \label{sec_pf}

To prove Theorem~\ref{thm_stab} we check the standard conditions.
\begin{enumerate}
\item {\bf Boundedness of the functional.} This is immediate since, for $f\in L_2(\Omega)$, it holds
      $L(z)\le \|f\|\,\|z\|\le \|f\|\,\|z\|_{2,\cT}$
      for any $(z,\XXi,\btau)\in\VV$.
\item {\bf Boundedness of the bilinear form.} 
      The bound $b(\uu,\vv)\lesssim \|\uu\|_\UU \|\vv\|_\VV$ for all $\uu\in\UU$ and
      $\vv\in\VV$ is also immediate by definition of the
      norms in $\UU$ and $\VV$, cf. the corresponding functional spaces in
      \eqref{a1}--\eqref{jtauXi}.
\item {\bf Injectivity.} If $\uu\in\UU$ with $b(\uu,\vv)=0\ \forall \vv\in\VV$ then
      $\uu=0$, as can be seen as follows. For given $\uu=(u,\btheta,\MM,\tu,\tQ)\in\UU$
      we select $g=u$, $\bl=\btheta$, and $\HH=\MM$, and let $\br\in (\bH^{3/2,1/2}_{00}(\cS))'$,
      and $\bj\in(\bH^{-3/2,-1/2}(\cS))'$
      be the Riesz functionals of $-\tu\in \bH^{3/2,1/2}_{00}(\cS)$
      and $\tQ\in\bH^{-3/2,-1/2}(\cS)$, respectively.
      According to Proposition~\ref{prop_adj}, there exists $\vv\in\VV$ that satisfies
      the adjoint problem \eqref{adj} with these functionals. It also yields
      \[
         b(\uu,\vv) = \|u\|^2 + \|\btheta\|^2 + \|\MM\|^2
                    + \|\tu\|_{\trggrad{\cS,0}}^2 + \|\tQ\|_{-3/2,-1/2,\cS}^2
         =0,
      \]
      which proves that $\uu=0$.
\item {\bf Inf-sup condition.} For given $\vv=(z,\XXi,\btau)\in\VV$ let $g$, $\bl$, $\HH$,
      $\br$, and $\bj$ be defined by the corresponding left-hand sides in \eqref{adj}.
      Then, by Proposition~\ref{prop_adj},
\begin{align*}
   \sup_{0\not=\uu\in\UU} \frac {b(\uu,\vv)}{\|\uu\|_\UU}
   &=
   \Bigl(\|g\|^2 + \|\bl\|^2 + \|\HH\|^2
         + \|\br\|_{(\trggrad{\cS,0})'}^2 + \|\bj\|_{(-3/2,-1/2,\cS)'}^2 \Bigr)^{1/2}
   \gtrsim
   \|\vv\|_{\VV}
\end{align*}
   with an implicit constant that is independent of $\vv$ and $\cT$.
\end{enumerate}
This proves Theorem~\ref{thm_stab}.

Recall that the DPG method delivers the best approximation in the energy norm $\|\cdot\|_\EE$,
\[
   \|\uu-\uu_h\|_\EE = \min\{\|\uu-\bw\|_\EE;\; \bw\in\UU_h\}.
\]
Therefore, to show Theorem~\ref{thm_DPG}, it is enough to prove the equivalence of the
energy norm and the norm $\|\cdot\|_\UU$.
The bound $\|\uu\|_\EE\lesssim\|\uu\|_\UU$ is equivalent to the boundedness
of $b(\cdot,\cdot)$, which we have just checked.
By definition of $\|\cdot\|_\EE=\|B(\cdot)\|_{\VV'}$, the other inequality,
$\|\uu\|_\UU\lesssim \|\uu\|_\EE$ for all $\uu\in\UU$, is equivalent to
the stability of the adjoint problem \eqref{adj}, which has been shown by
Proposition~\ref{prop_adj}. We have thus shown Theorem~\ref{thm_DPG}.

\def\auxGG{\Pi_0^{\mathrm{Ggrad}}} 
\def\Amat{\boldsymbol{A}}
\def\Bmat{\boldsymbol{B}}
\def\Cmat{\boldsymbol{C}}
\def\Tref{\widehat{T}}
\def\xref{\widehat{x}}
\def\zref{\widehat{z}}
\def\Qref{\widehat\QQ}
\def\Xiref{\widehat\XXi}
\def\Mref{\widehat\MM}
\def\auxDD{\widehat\Pi^{\mathrm{dDiv}}} 
\def\uref{\widehat{u}}
\def\BB{\boldsymbol{B}}
\def\trafo{\mathcal{H}}
\def\piola{\mathcal{P}}
\def\btauref{\widehat\btau}
\def\bthetaref{\widehat\btheta}

\def\basq{\boldsymbol{q}}
\def\basqref{\widehat{\boldsymbol{q}}}

\def\projDref{\widehat\Pi^{\mathrm{Div,div}}} 
\def\projDQref{\widehat\Pi^{\mathrm{Div}}}
\def\projDtauref{\widehat\Pi^{\mathrm{div}}}

\section{Fortin operators} \label{sec_Fortin}

In this section we construct and analyze Fortin operators for the lowest-order trial space
$\UU_h$, cf.~\eqref{Uh}. We also present an operator that is appropriate for the DPG scheme
from \cite{FuehrerHN_UFK} with test space $H^2(\cT)\times\HdDiv{\cT}$.
In the following section we start by defining transformations of the involved spaces
$H^2(T)$, $\HdDiv{T}$, $\HDivdiv{T}$ from the reference element
$\Tref:=\mathrm{conv}\{(0,0),(1,0),(0,1)\}$ to an element $T\in\cT$.
These transformations and their properties are valid in two and three space dimensions,
but are only given in $\R^2$ for simplicity.
Afterwards we present and analyze three Fortin operators, for $H^2(\cT)$ in \S\ref{sec_projGG},
for $\HdDiv{\cT}$ in \S\ref{sec_projDD}, and for $\HDivdiv{\cT}$ in \S\ref{sec_projD}.
A composition of the former and the latter yield the required operator in this paper,
as indicated in the proof of Theorem~\ref{thm_DPG_discrete}. All these operators are only
studied in two space dimensions.

\subsection{Transformations} \label{sec_trafo}

In the following, geometric objects, functions and differential operators carry the symbol
 ``\,$\widehat{\ }$\;'' when referring to objects related to the reference element $\Tref$.
Traces of functions are denoted without this symbol from now on, except for their
transformed functions on the boundary of $\Tref$.

Let $F_T:\;\Tref\to T$ denote the affine mapping
\begin{align*}
  \begin{pmatrix}\widehat x\\ \widehat y\end{pmatrix}
  \mapsto
  \BB_T
  \begin{pmatrix}\widehat x\\ \widehat y\end{pmatrix} + 
  \boldsymbol{a}_T,
\end{align*}
where $\boldsymbol{a}_T\in\R^2$, $\BB_T\in\R^{2\times 2}$, and set $J_T=\det(\BB_T)$.
In the following, we only consider transformations which generate families of shape-regular
elements, $h_T$ refers to the diameter of $T$, and $h_\cT\in L_\infty(\Omega)$ with
$h_\cT|_T:=h_T$ ($T\in\cT$) is the mesh-width function.
We also assume that $|J_T|$ is uniformly bounded so that $\|h_\cT\|_{L_\infty(\Omega)}=O(1)$
uniformly for all meshes $\cT$. This simplifies the writing of some norm estimates.

We recall the associated Piola transformation $\piola_T$ where, for $\btauref:\;\Tref\to \R^2$,
its transformed function $\btau:=\piola_T(\btauref)$ is defined through
\begin{align*}
  |J_T| \btau\circ F_T := \BB_T\btauref.
\end{align*}
Now, applying the Piola transform to tensor functions this does not maintain symmetry. We
therefore introduce a symmetrized version, the Piola--Kirchhoff transformation.
Specifically, for a tensor function $\Mref:\;\Tref \to \R^{d\times d}$, the transformed
function $\MM := \trafo_T(\Mref):\;T\to \R^{d\times d}$ is defined through
\begin{align*}
  |J_T| \MM\circ F_T := \BB_T\Mref\BB_T^t,
\end{align*}
cf.~\cite[Section~3.1]{PechsteinS_11_TDN}.
We collect some important transformation properties.

\begin{lemma}\label{lem:trafo}
  The transformation $\trafo_T:\;\HdDivref{\Tref}\to\HdDiv{T}$ is an isomorphism.
  Let $\Mref\in\HdDivref{\Tref}$, $\zref\in H^2(\Tref)$ and set $\MM:=\trafo_T(\Mref)$, $z:=\zref\circ
  F_T^{-1}\in H^2(T)$. Then, the relations
  \begin{align*}
    |J_T| (\div\Div\MM) \circ F_T &= \divref\Divref\Mref, \quad\text{and}\\
    |J_T| (\Grad\grad z \mathbin\colon \MM)\circ F_T &= \Gradref\gradref \zref \mathbin\colon\Mref
  \end{align*}
  hold so that, in particular,
  \begin{align}\label{eq:idboundary}
    \dual{\traceDD{T}\MM}{z}_{\partial T} = \dual{\traceDD{\Tref}\Mref}{\zref}_{\partial\Tref}
    \quad\forall\MM\in\HdDiv{T},\ \forall z\in H^2(T).
  \end{align}
  Furthermore,
  \begin{align}
    \label{eq:scalingDivDiv:a}
    \norm{\MM}{T} &\simeq h_T \norm{\Mref}{\Tref},
    \quad \norm{\div\Div\MM}{T} \simeq h_T^{-1}\norm{\divref\Divref\Mref}{\Tref}
  \end{align}
  for any $\Mref\in \HdDivref{\Tref}$ and with generic constants independent of $h$.
\end{lemma}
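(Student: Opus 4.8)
The plan is to prove the identities first by a density argument — it suffices to verify them for smooth $\Mref$ and $\zref$ — and then reduce everything to two elementary chain-rule computations, after which the boundary identity \eqref{eq:idboundary} and the scaling bounds \eqref{eq:scalingDivDiv:a} follow essentially for free. So first I would fix $\Mref$ smooth and symmetric, $\zref\in C^\infty(\Tref)$, set $\MM=\trafo_T(\Mref)$, $z=\zref\circ F_T^{-1}$, and compute the two differential relations.

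For the first relation, the key observation is that the Piola--Kirchhoff transform is exactly the composition of two row-wise Piola transforms, so $\Div$ behaves well under it: a direct computation shows $|J_T|(\Div\MM)\circ F_T = \BB_T\,(\Divref\Mref)$ (this is the standard Piola identity applied to each of the two ``layers''), and applying the ordinary (vector) Piola identity once more to $\Divref\Mref$ gives $|J_T|(\div\Div\MM)\circ F_T = \divref\Divref\Mref$. For the second relation, since $z=\zref\circ F_T^{-1}$ one has $\grad z\circ F_T = \BB_T^{-t}\gradref\zref$ and $\Grad\grad z\circ F_T = \BB_T^{-t}(\Gradref\gradref\zref)\BB_T^{-1}$; then the Frobenius contraction with $\MM\circ F_T = |J_T|^{-1}\BB_T\Mref\BB_T^t$ collapses, using $\mathrm{tr}(\BB_T^{-t}H\BB_T^{-1}\BB_T\Xi\BB_T^t)=\mathrm{tr}(H\Xi)$ (cyclicity of the trace), to $|J_T|^{-1}(\Gradref\gradref\zref\mathbin\colon\Mref)$, which is the claimed formula. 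These are the ``routine calculations'' I would not spell out in full but would present as chain-rule plus trace-cyclicity.

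Given the two pointwise identities, the change of variables $\int_T(\cdot) = \int_{\Tref} |J_T|\,(\cdot)\circ F_T$ turns each of the two $L_2(T)$-bilinear forms in the definition \eqref{trDDT} of $\traceDD{T}$ into the corresponding form over $\Tref$, and subtracting gives \eqref{eq:idboundary}; the fact that the resulting functional depends only on boundary data of $z$ (hence is a legitimate trace pairing) is already part of the setup from \cite{FuehrerHN_UFK}. That $\trafo_T$ maps $\HdDivref{\Tref}$ isomorphically onto $\HdDiv{T}$ is then immediate: it is linear, algebraically invertible (with inverse $\trafo_{T^{-1}}$ built from $\BB_T^{-1}$), it preserves symmetry by construction, and the norm equivalence needed for boundedness in both directions is exactly \eqref{eq:scalingDivDiv:a}. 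For those scaling estimates, I would use the pointwise bounds $\|\BB_T\|\lesssim h_T$, $\|\BB_T^{-1}\|\lesssim h_T^{-1}$ and $|J_T|\simeq h_T^2$ that hold uniformly for shape-regular families: from $|J_T|\,\MM\circ F_T=\BB_T\Mref\BB_T^t$ and the change of variables one gets $\|\MM\|_T^2 = |J_T|^{-1}\int_{\Tref}|\BB_T\Mref\BB_T^t|^2 \simeq h_T^{-2}\cdot h_T^4\,\|\Mref\|_{\Tref}^2 = h_T^2\|\Mref\|_{\Tref}^2$, and similarly $\|\div\Div\MM\|_T^2 = |J_T|^{-1}\int_{\Tref}|\divref\Divref\Mref|^2\simeq h_T^{-2}\|\divref\Divref\Mref\|_{\Tref}^2$, using the already-proved first identity; the reverse inequalities follow by applying the same argument to $\trafo_{T}^{-1}$.

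The only genuine obstacle is bookkeeping in the first differential identity: one must be careful that $\Div$ acts row-wise and that the Piola transform intertwines $\Divref$ and $\Div$ in the right way for a \emph{matrix} field whose two ``column vectors'' are themselves Piola-transformed — i.e. one has to check that transforming columns by $\BB_T$ and rows by $\BB_T^t$ is consistent with the row-wise divergence. This is precisely the point where the symmetrized (Piola--Kirchhoff) transform, rather than a naive component-wise pullback, is needed, and it is the reason it is borrowed from \cite{PechsteinS_11_TDN}. Everything else — the Hessian identity, the change of variables, the norm scalings, and the isomorphism claim — is standard once that point is settled.
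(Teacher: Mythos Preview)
Your proposal is correct and follows the same approach as the paper: the paper's proof is extremely terse (``the first two identities follow by definition of the transformation and tensor calculus'', ``the scaling properties follow by standard arguments''), and you have simply spelled out those computations in detail. Your intermediate identity $|J_T|(\Div\MM)\circ F_T=\BB_T\,\Divref\Mref$, the trace-cyclicity argument for the Hessian contraction, the change-of-variables derivation of \eqref{eq:idboundary}, and the shape-regularity scaling for \eqref{eq:scalingDivDiv:a} are all accurate and match what the authors leave implicit.
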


\begin{proof}
  The first two identities follow by definition of the transformation and tensor calculus.
  Relation~\eqref{eq:idboundary} follows by these properties and the definition \eqref{trGGT}
  of the trace operator,
  \begin{align*}
    \dual{\traceDD{T}\MM}{z}_{\partial T} &= \vdual{\div\Div\MM}{z}_T - \vdual{\MM}{\Grad\grad z}_T 
    = \vdual{\divref\Divref\Mref}{\zref}_{\Tref} - \vdual{\Mref}{\Gradref\gradref z}_{\Tref}
    \\&= \dual{\traceDD{\Tref}\Mref}{\zref}_{\partial\Tref}.
  \end{align*}
  The scaling properties~\eqref{eq:scalingDivDiv:a} follow by standard arguments.
\end{proof}

Combining the transformations $\trafo_T$ and $\piola_T$ we obtain a transformation for elements
of $\HDivdiv{T}$. The corresponding properties are obtained as before.

\begin{lemma} \label{la_trafo2}
  The transformation $(\trafo_T,\piola_T):\;\HDivdivref{\Tref}\to\HDivdiv{T}$ is an isomorphism. 
  Let $(\Xiref,\btauref)\in\HDivdivref{\Tref}$, $\uref\in H^2(\Tref)$ and set
  $(\XXi,\btau):=(\trafo_T(\Xiref),\piola_T(\btauref))$, $u:=\uref\circ F_T^{-1}\in H^2(T)$.
  The relations
  \begin{align}
    |J_T|(\Div\XXi-\btau) \circ F_T &= \Bmat_T(\Divref\Xiref-\btauref), \\
    |J_T| \grad u\cdot(\Div\XXi-\btau) \circ F_T &= \gradref\uref\cdot(\Divref\Xiref-\btauref)
  \end{align}
  hold, in particular
  \begin{align}
    \dual{\ttraceGG{T}u}{(\XXi,\btau)}_{\partial T}
    =
    \dual{\ttraceGG{\Tref}\uref}{(\Xiref,\btauref)}_{\partial\Tref}
    \quad\forall(\XXi,\btau)\in\HDivdiv{T},\ \forall u\in H^2(T).
  \end{align}
  Moreover,
  \begin{align}
    \norm{\Div\XXi-\btau}T \simeq \norm{\Divref\Xiref-\btauref}{\Tref} 
    \quad\text{and}\quad
    \norm{\div\btau}T \simeq h_T^{-1}\norm{\divref\btauref}{\Tref}.
  \end{align}
  for any $(\Xiref,\btauref)\in \HDivdivref{\Tref}$ and with generic constants independent of $h$.
\end{lemma}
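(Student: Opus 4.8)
The plan is to mirror the proof of Lemma~\ref{lem:trafo}, now for the pair-valued transformation $(\trafo_T,\piola_T)$. First I would observe that $\trafo_T$ is the Piola--Kirchhoff transform acting on the tensor component and $\piola_T$ is the ordinary Piola transform acting on the vector component; both are linear bijections on smooth functions, so the claim that $(\trafo_T,\piola_T):\;\HDivdivref{\Tref}\to\HDivdiv{T}$ is an isomorphism will follow once the norm equivalence $\|(\XXi,\btau)\|_{\Divdiv,T}\simeq\|(\Xiref,\btauref)\|_{\Divdivref,\Tref}$ (up to $h_T$-powers that cancel appropriately) is established on the dense subspace. The three ingredients for this are: the scaling $\|\XXi\|_T\simeq h_T\|\Xiref\|_{\Tref}$ (identical to \eqref{eq:scalingDivDiv:a}), and the two new identities for $\Div\XXi-\btau$ and $\div\btau$.

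The first identity, $|J_T|(\Div\XXi-\btau)\circ F_T=\Bmat_T(\Divref\Xiref-\btauref)$, is the heart of the matter. I would prove it by showing separately that $|J_T|(\Div\XXi)\circ F_T=\Bmat_T\Divref\Xiref + (\text{lower-order terms from }\Bmat_T^{-t})$ is \emph{not} quite how Piola--Kirchhoff behaves, and instead rely on the known contravariant Piola identity $|J_T|(\divwo\btau)\circ F_T=\divref\btauref$ together with its tensor analogue; a cleaner route is to note that the Piola--Kirchhoff transform is designed precisely so that $|J_T|(\Div\XXi)\circ F_T=\Bmat_T(\Divref\Xiref)$ holds after pulling the row-wise divergence through $\Bmat_T\,\cdot\,\Bmat_T^t$ and using that $\Bmat_T$ is constant. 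Since $\btau=\piola_T(\btauref)$ satisfies $|J_T|\btau\circ F_T=\Bmat_T\btauref$ by definition, subtracting gives the stated relation. The second identity, $|J_T|\grad u\cdot(\Div\XXi-\btau)\circ F_T=\gradref\uref\cdot(\Divref\Xiref-\btauref)$, then follows by combining the first identity with the chain rule $\grad u\circ F_T=\Bmat_T^{-t}\gradref\uref$: the factor $\Bmat_T$ from the first identity and $\Bmat_T^{-t}$ from the gradient transform contract to the identity, leaving exactly the reference-element pairing. The boundary-duality identity for $\ttraceGG{}$ then drops out by writing $\dual{\ttraceGG{T}u}{(\XXi,\btau)}_{\partial T}$ via \eqref{trGG2T} as $\vdual{\div\btau}{u}_T+\vdual{\btau-\Div\XXi}{\grad u}_T-\vdual{\XXi}{\Grad\grad u}_T$, transforming each of the three volume integrals using the identity just proved, the analogue of $|J_T|(\divwo\btau)\circ F_T=\divref\btauref$, and the $\Grad\grad$ identity from Lemma~\ref{lem:trafo} (the $|J_T|$ Jacobian from the change of variables in each integral cancels against the $|J_T|$ in the transformation laws).

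Finally, the scaling estimates: $\|\Div\XXi-\btau\|_T\simeq\|\Divref\Xiref-\btauref\|_{\Tref}$ follows from the first identity together with $\|\Bmat_T\|\simeq h_T$, $\|\Bmat_T^{-1}\|\simeq h_T^{-1}$, $|J_T|\simeq h_T^2$ and the change-of-variables factor $|J_T|^{-1}$ in $\|\cdot\|_T^2$ — the $h_T$ powers balance to give an $h$-independent equivalence; similarly $\|\div\btau\|_T\simeq h_T^{-1}\|\divref\btauref\|_{\Tref}$ comes from $|J_T|(\divwo\btau)\circ F_T=\divref\btauref$ and the same bookkeeping. The main obstacle I anticipate is purely the tensor-calculus verification of $|J_T|(\Div\XXi)\circ F_T=\Bmat_T\Divref\Xiref$: one must carefully push the row-wise divergence operator through the sandwich $\Bmat_T\,\Xiref\,\Bmat_T^t$ and check that the constancy of $\Bmat_T$ (affine $F_T$) kills all the would-be extra terms; the rest is routine scaling bookkeeping analogous to Lemma~\ref{lem:trafo}, and I would simply write ``as before'' or ``by standard arguments'' for those parts, as the authors do.
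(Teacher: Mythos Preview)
Your proposal is correct and follows exactly the approach the paper intends: the paper in fact omits any proof for this lemma, presenting it immediately after Lemma~\ref{lem:trafo} whose proof consists of the phrases ``by definition of the transformation and tensor calculus'' and ``by standard arguments,'' so your plan to mirror that argument---verifying $|J_T|(\Div\XXi)\circ F_T=\Bmat_T\Divref\Xiref$ by direct computation using constancy of $\Bmat_T$, subtracting the Piola identity for $\btau$, contracting with $\grad u\circ F_T=\Bmat_T^{-t}\gradref\uref$, assembling the three volume terms from \eqref{trGG2T}, and then doing the $h_T$-bookkeeping---is precisely what is expected. Your only slightly confused sentence (the aside about ``lower-order terms'') you immediately correct with the cleaner route, which is the right one.
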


\subsection{Basis functions for discrete trace spaces} \label{sec_basis}

Let us identify some basis functions for the trace space $Q_\cS$ (previously denoted by
$\widehat Q_\cS$) and introduce another piecewise polynomial trace space of $H^2(\cT)$.
We do this locally for an element $T\in\cT$.

Recall the space $U_{\trddiv{T}}$ defined in \eqref{UdDivT}.
For fixed $T\in\cT$, let $e_1,e_2,e_3$ be its nodes.
We consider the space $Q_T:=\traceDD{T}(U_{\trddiv{T}})$ and choose a basis
$\basq_j$, $j=1,\dots,9$, associated to its degrees of freedom~\eqref{eq:dofDDlocal} such that
\begin{align*}
  \dual{\basq_j}{z}_{\partial T} &=
  \left\{\begin{array}{ll}
    z(e_j) & j=1,2,3, \\
    \frac{1}{|E_{j-3}|}\int_{E_{j-3}} z\,ds & j=4,5,6, \\
    \int_{E_{j-6}} \nn_T\cdot \nabla z \,ds & j=7,8,9,
  \end{array}\right\}
  \qquad\forall z\in H^2(T).
\end{align*}
Here, $E_k$ denotes the edge spanned by $e_k$, $e_{\mathrm{mod}(k,3)+1}$.
Now select $\MM_j\in\HdDiv{T}$ with $\traceDD{T}\MM_j = \basq_j$.
We define $\basqref_j := \traceDD{\Tref}\trafo_T^{-1}\MM_j$.
Then, Lemma~\ref{lem:trafo} shows that
\begin{align}\label{eq:qj:traceId}
  \dual{\basq_j}{z}_{\partial T} = \dual{\basqref_j}{\zref}_{\partial\Tref}
  \quad\forall z\in H^2(T)\ \text{and}\ \zref=z\circ F_T^{-1}.
\end{align}
We also recall the local discrete space $\widehat U_T$ (now denoted by $U_T$), cf.~\eqref{hatUT}.
One notes that, if $u\in H^2(T)$ has the trace $\traceGG{T}(u)\in U_T$, this does not imply
that $\traceGG{\Tref}(\uref)\in U_{\Tref}$. In fact, affine maps of functions with polynomial traces
of degree three and polynomial normal derivatives of degree one can have polynomial normal derivatives
of degree two. For this reason we introduce the piecewise polynomial trace space
\begin{align*}
  P^{p,k}_c(\partial T)
  :=
  \traceGG{T}\{v\in H^2(T);\; v|_E \in P^p(E),\ \partial_{\nn_T} v|_E \in P^k(E)\
               \forall E\in\cE_T\}
\end{align*}
for $p\geq 3$, $k\geq 1$, and observe two things.
First, $U_{\partial T} = P^{3,1}_c(\partial T)\subset P^{3,2}_c(\partial T)$ and, second,
$u\in H^2(T)$ satisfies $\traceGG{\Tref}\uref \in P^{3,2}_c(\partial\Tref)$ if and only if
$\traceGG{T}u \in P^{3,2}_c(\partial T)$.
We also recall that $\dim(P^{3,1}_c(\partial T)) = 9$.
Furthermore, $\dim(P^{3,2}_c(\partial T)) = 12$.

\subsection{Fortin operator for the test space $H^2(\cT)$} \label{sec_projGG}

We start with the construction of a Fortin operator for the scalar test functions of $H^2(\cT)$.
We do this in two steps, starting with a preliminary operator $\auxGG$ and then taking care
of the kernel of $\pwGrad\pwgrad$.

\begin{lemma}\label{lem:auxGG}
  There exists an operator $\auxGG: H^2(\cT) \to P^3(\cT)$ such that
  \begin{align}\label{eq:idBouAuxGG}
    \dual{\basq}{\auxGG z}_\cS = \dual{\basq}{z}_\cS \quad\forall\basq\in Q_\cS, z\in H^2(\cT).
  \end{align}
  In particular, 
  \begin{align}\label{eq:idVolAuxGG}
    \vdual{\MM_h}{\pwGrad\pwgrad\auxGG z} = \vdual{\MM_h}{\pwGrad\pwgrad z} 
    \quad\forall\MM_h\in \PP^{0,s}(\cT).
  \end{align}
  Moreover, it holds the estimate
  \begin{align}\label{eq:boundAuxGG}
  \begin{split}
    \norm{\auxGG z}{} + \norm{h_\cT^2 \pwGrad\pwgrad \auxGG z}{}
    &\lesssim \norm{z}{} + \norm{h_\cT^2 \pwGrad\pwgrad z}{}
    \quad\forall z\in H^2(\cT).
  \end{split}
  \end{align}
\end{lemma}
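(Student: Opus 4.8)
The operator $\auxGG$ will be built element by element on the reference triangle $\Tref$ and then pulled back. On $\Tref$, I would define a local operator $\auxGG_\Tref: H^2(\Tref)\to P^3(\Tref)$ by imposing the $9$ conditions $\dual{\basqref_j}{\auxGG_\Tref\zref}_{\partial\Tref}=\dual{\basqref_j}{\zref}_{\partial\Tref}$ for $j=1,\dots,9$, where $\basqref_j$ are the reference basis functionals from \S\ref{sec_basis}. Since $\dim P^3(\Tref)=10$ while there are $9$ constraints, I would add one further normalization, e.g.\ matching the mean value $\int_{\Tref}(\auxGG_\Tref\zref - \zref)\,d\xref=0$, to fix a well-posed square system; the first step is therefore to check that this $10\times 10$ linear system is invertible, i.e.\ that the functionals $\{\basqref_1,\dots,\basqref_9,\ \int_{\Tref}\cdot\}$ are linearly independent on $P^3(\Tref)$. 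This follows because the $\basqref_j$ are (via Lemma~\ref{lem:trafo}) exactly the degrees of freedom \eqref{eq:dofDDlocal} of $Q_\Tref=\traceDD{\Tref}(U_{\trddiv{\Tref}})$ restricted to the $10$-dimensional space $P^3(\Tref)$ — equivalently, a cubic with vanishing vertex values, edge averages, and normal-derivative edge integrals together with zero mean must be zero, which one checks directly on $P^3(\Tref)$.

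Once $\auxGG_\Tref$ is defined, I set $\auxGG z|_T := (\auxGG_\Tref(z\circ F_T))\circ F_T^{-1}$ for each $T\in\cT$. The trace identity \eqref{eq:idBouAuxGG} then follows from \eqref{eq:qj:traceId}: for any $\basq\in Q_\cS$ and $T\in\cT$, $\basq|_{\partial T}$ lies in $Q_T=\traceDD{T}(U_{\trddiv{T}})$, hence is a combination of the $\basq_j$, and
\[
  \dual{\basq}{\auxGG z}_{\partial T}
  = \dual{\basqref}{\widehat{\auxGG_\Tref z}}_{\partial\Tref}
  = \dual{\basqref}{\zref}_{\partial\Tref}
  = \dual{\basq}{z}_{\partial T},
\]
the middle equality being the defining property of $\auxGG_\Tref$ on the $\basqref_j$ and linearity. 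Summing over $T$ gives \eqref{eq:idBouAuxGG}. For \eqref{eq:idVolAuxGG}, I note that for a constant symmetric tensor $\MM_h|_T\in\PP^{0,s}(T)$ we have $\vdual{\MM_h}{\Grad\grad w}_T=\dual{\traceDD{T}(\MM_h)}{w}_{\partial T}$ for all $w\in H^2(T)$ by \eqref{trDDT} (since $\div\Div$ of a constant tensor vanishes), and $\traceDD{T}(\MM_h)\in Q_T$; thus \eqref{eq:idVolAuxGG} is an immediate consequence of \eqref{eq:idBouAuxGG} applied with $\basq=\traceDD{}(\MM_h)$.

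The boundedness estimate \eqref{eq:boundAuxGG} is the routine-but-careful part. On $\Tref$, boundedness of $\auxGG_\Tref$ from $(H^2(\Tref),\|\cdot\|_{2,\Tref})$ into $P^3(\Tref)$ with its (equivalent, finite-dimensional) $H^2$-norm is clear by continuity of the defining functionals and finite dimensionality. Transferring to $T$ via the affine map and using the shape-regular scaling relations (the analogue of \eqref{eq:scalingDivDiv:a}: $\|w\|_T\simeq h_T\|\widehat w\|_{\Tref}$ and $\|\Grad\grad w\|_T\simeq h_T^{-1}\|\Gradref\gradref\widehat w\|_{\Tref}$, together with $\|\widehat{\auxGG_\Tref z}\|_{\Tref}\lesssim \|\zref\|_{\Tref}+\|\Gradref\gradref\zref\|_{\Tref}$ and similarly for the second derivatives) yields, on each $T$, $\|\auxGG z\|_T + h_T^2\|\Grad\grad\auxGG z\|_T \lesssim \|z\|_T + h_T^2\|\Grad\grad z\|_T$; squaring and summing over $T\in\cT$ gives \eqref{eq:boundAuxGG}. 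The main obstacle I anticipate is precisely the bookkeeping in this scaling step — making sure the reference-element bound is stated in the $h_T^2$-weighted norm that survives the pullback (so that a naive $H^2$-bound, which would scale with $h_T^{-1}$, is avoided), which is exactly why the right-hand side of \eqref{eq:boundAuxGG} carries the factor $h_\cT^2$ on the Hessian term. Verifying the injectivity of the $10\times 10$ system is conceptually the only genuine content; the authors' remark that such checks can be done directly or by MATLAB suggests it is elementary here.
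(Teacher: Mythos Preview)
Your overall strategy is sound and close in spirit to the paper's, but the specific unisolvence claim at the heart of your construction is \emph{false}. You assert that a cubic with vanishing vertex values, vanishing edge averages, vanishing edge integrals of the normal derivative, and zero mean must be zero. Consider the ``Vandermonde'' cubic
\[
   p_V := (\eta_1-\eta_2)(\eta_2-\eta_3)(\eta_1-\eta_3)\in P^3(\Tref),
\]
written in barycentric coordinates. One checks directly (or by using the antisymmetry of $p_V$ under transpositions of the $\eta_i$) that $p_V$ vanishes at each vertex, restricts on each edge to an odd cubic about the midpoint (hence has zero edge average), and has $\int_{\widehat E}\partial_{\widehat\nn}p_V\,d\widehat s=0$ on every edge; moreover $p_V$ is odd under the reflection $(x,y)\mapsto(y,x)$ of $\Tref$, so $\int_{\Tref}p_V=0$. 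Thus $p_V\neq 0$ satisfies all ten of your conditions, and your $10\times10$ system is singular. Any tenth functional $L$ you choose must satisfy $L(p_V)\neq 0$; the mean value does not.

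The paper sidesteps this by \emph{not} working on all of $P^3$. It selects an explicit $9$-dimensional subspace $\mathrm{span}\{\eta_1,\ldots,\eta_9\}\subset P^3(T)$ and builds a dual basis $\chi_k$ to the $\basq_j$ there, setting $\auxGG z=\sum_j\dual{\basq_j}{z}_{\partial T}\chi_j$. The crucial design choice is to include the bubble $\eta_b=\eta_1\eta_2\eta_3$ in the definitions $\eta_{7,8,9}=\eta_i\eta_j(\eta_j-\eta_i)+\eta_b$; without it one finds $\eta_7'+\eta_8'+\eta_9'=-p_V$, so $p_V$ would lie in the span and the $9\times9$ matrix would again be singular (this is exactly the remark in the paper that ``without adding the bubble function, the block $\Amat_3$ would not be invertible''). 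Adding $\eta_b$ pushes $p_V$ out of the span, and the resulting $9\times9$ matrix $\Amat$ is checked to be invertible. The remainder of your argument (deriving \eqref{eq:idVolAuxGG} from \eqref{eq:idBouAuxGG} via $\traceDD{T}(\MM_h)\in Q_T$, and the scaling for \eqref{eq:boundAuxGG}) matches the paper's and is correct.
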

\begin{proof}
  We construct the operator locally for each element $T\in\cT$. 
  The idea is to use a dual basis with $\dual{\basq_j}{\chi_k}_{\partial T} = \delta_{jk}$
  where $\chi_k\in P^3(T)$. Then, $\auxGG$ defined by
  \begin{align*}
    \auxGG z = \sum_{j=1}^9 \dual{\basq_j}{z}_{\partial T} \chi_j
  \end{align*}
  satisfies \eqref{eq:idBouAuxGG}.

  To construct the dual basis we consider a set of linearly independent functions. 
  Let $\eta_{1,2,3}$ denote the nodal functions, i.e,
  $\eta_j(e_k) = \delta_{jk}$ for $j,k=1,2,3$, and define
  \begin{align*}
    \eta_4 &:= \eta_1\eta_2,  \quad \eta_5 := \eta_2\eta_3, 
    \quad \eta_6 := \eta_3\eta_1, \quad\eta_\mathrm{b} := \eta_1\eta_2\eta_3 \\
    \eta_7 &:= \eta_4(\eta_2-\eta_1)+\eta_\mathrm{b}, 
    \quad\eta_8 := \eta_5(\eta_3-\eta_2)+\eta_\mathrm{b}, 
    \quad \eta_9 := \eta_6(\eta_1-\eta_3)+\eta_\mathrm{b}.
  \end{align*}
  Let $\Amat\in\R^{9\times 9}$ be the matrix with entries $\Amat_{jk} = \dual{\basq_j}{\eta_k}_{\partial T}$,
  $j,k=1,\dots,9$.
  If we prove that $\Amat$ is invertible, then the rows of $\Amat^{-1}$ define a set of linearly
  independent functions
  \begin{align*}
    \chi_k := \sum_{j=1}^9 \Amat^{-1}_{kj}\eta_j
  \end{align*}
  with the desired property
  \begin{align*}
    \dual{\basq_j}{\chi_k}_{\partial T} = \delta_{jk} \quad\text{for }j,k=1,\dots,9.
  \end{align*}
  Let us analyze the matrix $\Amat$. One verifies by simple calculations that $\Amat$ has the form
  \begin{align*}
    \Amat = \begin{pmatrix}
      \Amat_1 & \Amat_2 \\
      0 & \Amat_3
    \end{pmatrix},
  \end{align*}
  where $\Amat_1\in\R^{6\times 6}$ is an upper triangular matrix whose diagonal entries are non-zero.
  The matrix $\Amat$ is thus invertible if and only if the block $\Amat_3\in\R^{3\times 3}$ has full rank. 
  Again, direct calculations yield $\det(\Amat_3)\neq 0$. 
  At this point it is important to mention the appearance of the bubble function in the
  definition of $\eta_{7,8,9}$. Without adding the bubble function, the block $\Amat_3$ would not be invertible.
  Moreover, observe that, since all entries of $\Amat$ are independent of $h$, also the coefficients
  $\alpha_{jk}$ in $\chi_k = \sum_{j=1}^9 \alpha_{jk}\eta_j$ are independent of $h$ by~\eqref{eq:qj:traceId}.

  Next, we show identity~\eqref{eq:idVolAuxGG} element-wise. Let $\MM\in \PP^{0,s}(T)$.
  It holds $\traceDD{T}\MM \in Q_T$.
  The definition of the trace operator and~\eqref{eq:idBouAuxGG} yield
  \begin{align*}
    \vdual{\MM}{\Grad\grad \auxGG z}_T &= \vdual{\div\Div\MM}{\auxGG z}_T - \dual{\traceDD{T} \MM}{\auxGG z}_{\partial T}
    = -\dual{\traceDD{T} \MM}{\auxGG z}_{\partial T} \\
    &= -\dual{\traceDD{T} \MM}{z}_{\partial T} = \vdual{\div\Div\MM}{z}_T - \dual{\traceDD{T} \MM}{z}_{\partial T} \\
    &= \vdual{\MM}{\Grad\grad z}_T.
  \end{align*}
  It remains to prove estimate~\eqref{eq:boundAuxGG}. It follows by standard scaling arguments and
  an inverse inequality. Specifically, we prove the estimate for each $T\in\cT$. First, 
  \begin{align*}
    \norm{\auxGG z}{T}
    \leq \sum_{j=1}^9 |\dual{\basq_j}{z}_{\partial T}| \, \norm{\chi_j}T
    \lesssim \sum_{j=1}^9 h_T |\dual{\basqref_j}{\zref}_{\partial\Tref}|
    \lesssim h_T \norm{\zref}{2,\Tref}
    \simeq \norm{z}{T} + h_T^2\norm{\Grad\grad z}{T}.
  \end{align*}
  Second, applying an inverse inequality, we find that
  \begin{align*}
    \norm{\Grad\grad\auxGG z}T \lesssim h_T^{-2} \norm{\auxGG z}T 
    \lesssim h_T^{-2}\norm{z}T + \norm{\Grad\grad z}{T}.
  \end{align*}
  This finishes the proof.
\end{proof}

To finish the construction of the Fortin operator we
observe that $\ker(\pwGrad\pwgrad) = P^1(\cT)$.
For $z\in H^2(\cT)$, we denote by $z_{\ker}$ its $L_2(\Omega)$ projection onto $P^1(\cT)$.
Then we define
\begin{align*}
   \projGG:\;
   \left\{\begin{array}{cll}
      H^2(\cT) & \to &P^3(\cT),\\
      z        & \mapsto &\projGG z := \auxGG(z-z_{\ker})+z_{\ker},
   \end{array}\right.
\end{align*}
and show that it satisfies the required properties.

\begin{lemma} \label{la_projGG}
  For any $z\in H^2(\cT)$ the operator $\projGG:\;H^2(\cT) \to P^3(\cT)$ satisfies
  \begin{alignat}{2}
    \label{eq:idBouGG}
    \dual{\basq}{\projGG z}_\cS &= \dual{\basq}z_\cS &&\forall\basq\in Q_\cS,\\
    \label{eq:idVolGG}
    \vdual{\MM}{\pwGrad\pwgrad\projGG z} &= \vdual{\MM}{\pwGrad\pwgrad z}\quad
    &&\forall\MM\in \PP^{0,s}(\cT).
  \end{alignat}
  Furthermore,
  \begin{align}\label{eq:kerGG}
    \projGG z = z \quad\forall z\in \ker(\pwGrad\pwgrad)=P^1(\cT)
  \end{align}
  holds, and we have the local approximation property
  \begin{align}\label{eq:approxGG}
    \norm{z-\projGG z}{} \lesssim \norm{h_\cT^2 \pwGrad\pwgrad z}{}
    \quad\forall z\in H^2(\cT)
  \end{align}
  and the bound
  \begin{align*}
    \norm{\projGG z}{} + \norm{\pwGrad\pwgrad \projGG z}{}
    \lesssim
    \norm{z}{} + \norm{\pwGrad\pwgrad z}{}
    \quad\forall z\in H^2(\cT).
  \end{align*}
\end{lemma}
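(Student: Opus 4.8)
The plan is to deduce all four assertions for $\projGG$ from the already-established properties of $\auxGG$ in Lemma~\ref{lem:auxGG}, together with the elementary fact that $\ker(\pwGrad\pwgrad)=P^1(\cT)$ and that $z_{\ker}$ is the piecewise $L_2$-projection onto $P^1(\cT)$. The guiding observation is that $\pwGrad\pwgrad z_{\ker}=0$, so $\pwGrad\pwgrad\projGG z=\pwGrad\pwgrad\auxGG(z-z_{\ker})$, and that $z_{\ker}\in P^1(\cT)\subset P^3(\cT)$ so $\projGG z\in P^3(\cT)$ is well defined.

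First I would prove \eqref{eq:idBouGG}. Since $z_{\ker}\in P^1(\cT)\subset H^2_0$-type piecewise polynomials with continuous trace and gradient trace? — more carefully: $z_{\ker}|_T\in P^1(T)$ is smooth on each $T$, so $\traceGG{T}(z_{\ker})$ is well defined, and one computes $\dual{\basq}{\projGG z}_\cS=\dual{\basq}{\auxGG(z-z_{\ker})}_\cS+\dual{\basq}{z_{\ker}}_\cS$. Applying \eqref{eq:idBouAuxGG} with $z$ replaced by $z-z_{\ker}$ gives $\dual{\basq}{\auxGG(z-z_{\ker})}_\cS=\dual{\basq}{z-z_{\ker}}_\cS$, and adding $\dual{\basq}{z_{\ker}}_\cS$ recovers $\dual{\basq}{z}_\cS$. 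The same linear bookkeeping, using \eqref{eq:idVolAuxGG}, yields \eqref{eq:idVolGG}: one has $\vdual{\MM}{\pwGrad\pwgrad\projGG z}=\vdual{\MM}{\pwGrad\pwgrad\auxGG(z-z_{\ker})}=\vdual{\MM}{\pwGrad\pwgrad(z-z_{\ker})}=\vdual{\MM}{\pwGrad\pwgrad z}$, the last step because $\pwGrad\pwgrad z_{\ker}=0$. For \eqref{eq:kerGG}, if $z\in P^1(\cT)$ then $z_{\ker}=z$, so $z-z_{\ker}=0$ and $\projGG z=\auxGG(0)+z=z$.

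Next I would establish the approximation property \eqref{eq:approxGG}. Write $z-\projGG z=(z-z_{\ker})-\auxGG(z-z_{\ker})$ and set $w:=z-z_{\ker}$. Then $\norm{z-\projGG z}{}\le\norm{w}{}+\norm{\auxGG w}{}\lesssim\norm{w}{}+\norm{h_\cT^2\pwGrad\pwgrad w}{}$ by \eqref{eq:boundAuxGG}. Since $\pwGrad\pwgrad w=\pwGrad\pwgrad z$, the second term is already of the desired form; for the first term one uses the local approximation estimate for the $L_2$-projection onto $P^1(T)$, namely $\norm{w}{T}=\norm{z-z_{\ker}}{T}\lesssim h_T^2\norm{\pwGrad\pwgrad z}{T}$ (a standard Bramble--Hilbert / Poincaré-type bound on a shape-regular element, using $\norm{z-z_{\ker}}{T}\lesssim h_T\snorm{z}{1,T}'$ iterated, or directly $\norm{v-\Pi_1 v}{T}\lesssim h_T^2\norm{D^2 v}{T}$). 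Summing over $T\in\cT$ gives \eqref{eq:approxGG}. Finally the stability bound: $\norm{\projGG z}{}\le\norm{w}{}+\norm{z_{\ker}}{}+\ldots$; more directly, $\projGG z=\auxGG w+z_{\ker}$ with $\norm{z_{\ker}}{}\le\norm{z}{}$ and, by \eqref{eq:boundAuxGG}, $\norm{\auxGG w}{}+\norm{h_\cT^2\pwGrad\pwgrad\auxGG w}{}\lesssim\norm{w}{}+\norm{h_\cT^2\pwGrad\pwgrad w}{}\lesssim\norm{z}{}+\norm{h_\cT^2\pwGrad\pwgrad z}{}$, using $\norm{w}{}\le\norm{z}{}+\norm{z_{\ker}}{}\lesssim\norm{z}{}$ and $\pwGrad\pwgrad w=\pwGrad\pwgrad z$. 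Then $\pwGrad\pwgrad\projGG z=\pwGrad\pwgrad\auxGG w$, and since $\|h_\cT\|_{L_\infty(\Omega)}=O(1)$ we drop the weight $h_\cT^2$ at the cost of the generic constant, obtaining $\norm{\pwGrad\pwgrad\projGG z}{}\lesssim\norm{z}{}+\norm{\pwGrad\pwgrad z}{}$; adding $\norm{\projGG z}{}\lesssim\norm{z}{}+\norm{h_\cT^2\pwGrad\pwgrad z}{}\lesssim\norm{z}{}+\norm{\pwGrad\pwgrad z}{}$ finishes the bound.

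The only genuinely non-formal ingredient is the local estimate $\norm{z-z_{\ker}}{T}\lesssim h_T^2\norm{\pwGrad\pwgrad z}{T}$ for the $L_2$-projection onto $P^1(T)$; everything else is linear algebra with the already-proved identities for $\auxGG$ and the harmless uniform bound on the mesh-width. I expect no real obstacle here — this lemma is essentially a corollary of Lemma~\ref{lem:auxGG} once one observes that subtracting and adding back $z_{\ker}$ both preserves the boundary/volume orthogonalities (because $z_{\ker}$ is a piecewise polynomial annihilated by $\pwGrad\pwgrad$ whose traces are exactly reproduced) and supplies the missing approximation power on the kernel component.
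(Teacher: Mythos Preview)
Your treatment of \eqref{eq:idBouGG}, \eqref{eq:idVolGG}, \eqref{eq:kerGG}, and the approximation property \eqref{eq:approxGG} matches the paper's proof almost verbatim. The gap is in the last step, the bound on $\norm{\pwGrad\pwgrad\projGG z}{}$.

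You arrive at $\norm{h_\cT^2\pwGrad\pwgrad\auxGG w}{}\lesssim\norm{z}{}+\norm{h_\cT^2\pwGrad\pwgrad z}{}$ and then write ``since $\|h_\cT\|_{L_\infty(\Omega)}=O(1)$ we drop the weight $h_\cT^2$''. This goes the wrong way: the bound $h_\cT\le C$ lets you remove $h_\cT^2$ from the \emph{right}-hand side (it makes that term larger), but on the \emph{left} the weight $h_\cT^2$ makes the norm smaller, so removing it there would require a uniform lower bound on $h_\cT$, which you do not have under refinement. As written, your argument only yields $\norm{\pwGrad\pwgrad\projGG z}{}\lesssim\norm{h_\cT^{-2}z}{}+\norm{\pwGrad\pwgrad z}{}$, which is not mesh-independent.

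The fix is to exploit the very approximation estimate you already established, rather than the crude bound $\norm{w}{}\lesssim\norm{z}{}$. Since \eqref{eq:boundAuxGG} holds element-wise, dividing by $h_T^2$ gives
\[
   \norm{\Grad\grad\auxGG w}{T}\lesssim h_T^{-2}\norm{w}{T}+\norm{\Grad\grad w}{T}.
\]
Now the local estimate $\norm{w}{T}=\norm{z-z_{\ker}}{T}\lesssim h_T^2\norm{\Grad\grad z}{T}$ cancels the $h_T^{-2}$, and together with $\Grad\grad w=\Grad\grad z$ one obtains $\norm{\pwGrad\pwgrad\projGG z}{}\lesssim\norm{\pwGrad\pwgrad z}{}$. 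This is exactly what the paper does; the point is that subtracting $z_{\ker}$ is what makes the $h_T^{-2}$ harmless, not the global bound on $h_\cT$.
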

\begin{proof}
  Relation~\eqref{eq:kerGG} follows from the definition of $\projGG$.
  Then, identities~\eqref{eq:idBouGG},~\eqref{eq:idVolGG} follow from
  identities~\eqref{eq:idBouAuxGG},~\eqref{eq:idVolAuxGG}. Specifically,
  \begin{align*}
    \dual{\basq}{\projGG z}_\cS &= \dual{\basq}{\auxGG(z-z_{\ker})}_\cS 
    + \dual{\basq}{z_{\ker}}_\cS = \dual{\basq}{z-z_{\ker}}_\cS + \dual{\basq}{z_{\ker}}_\cS
    = \dual{\basq}{z}_\cS
  \end{align*}
  and, similarly, 
  \begin{align*}
    \vdual{\MM}{\pwGrad\pwgrad\projGG z} &= \vdual{\MM}{\pwGrad\pwgrad\auxGG (z-z_{\ker})} 
    + \vdual{\MM}{\pwGrad\pwgrad z_{\ker}} \\
    &= \vdual{\MM}{\pwGrad\pwgrad (z-z_{\ker})} + \vdual{\MM}{\pwGrad\pwgrad z_{\ker}} = 
    \vdual{\MM}{\pwGrad\pwgrad z}.
  \end{align*}
  Next, note that $z-\projGG z = z-\auxGG(z-z_{\ker})-z_{\ker}$, whence
  \begin{align*}
    \norm{z-\projGG z}{} \leq \norm{z-z_{\ker}}{} + \norm{\auxGG(z-z_{\ker})}{}.
  \end{align*}
  Then,~\eqref{eq:boundAuxGG} together with
  $\norm{z-z_{\ker}}{} \lesssim \norm{h_\cT^2 \pwGrad\pwgrad z}{}$ yield~\eqref{eq:approxGG}.

  Now, the approximation property shows that
  \begin{align*}
    \norm{\projGG z}{} \leq \norm{z-\projGG z}{} + \norm{z}{} \lesssim \norm{h_\cT^2 \pwGrad\pwgrad z}{} +
    \norm{z}{}.
  \end{align*}
  Finally, $\pwGrad\pwgrad\projGG z = \pwGrad\pwgrad\auxGG(z-z_{\ker})$ together
  with~\eqref{eq:boundAuxGG} and the approximation property yield
  \begin{align*}
    \norm{\pwGrad\pwgrad\projGG z}{}
    = \norm{\pwGrad\pwgrad\auxGG (z-z_{\ker})}{}
    \lesssim \norm{h_\cT^{-2}(z-z_{\ker})}{} + \norm{\pwGrad\pwgrad (z-z_{\ker})}{}
    \lesssim \norm{\pwGrad\pwgrad z}{}.
  \end{align*}
  This concludes the proof.
\end{proof}

\subsection{Fortin operator for the test space $\HDivdiv{\cT}$} \label{sec_projD}

Now we construct a Fortin operator for the combined tensor and vector valued test space
$\HDivdiv{\cT}$. It serves as the second component of the Fortin operator required for the analysis
of the stability of our fully discrete DPG scheme \eqref{DPG_discrete}. In fact, this
is the missing piece in the proof of Theorem~\ref{thm_DPG_discrete}.

By Lemma~\ref{la_trafo2} it holds $(\XXi,\btau)\in\HDivdiv{T}$ if and only if
$(\Xiref,\btauref) = \bigl(\trafo_T^{-1}(\XXi),\piola_T^{-1}(\btau)\bigr)\in \HDivdivref{\Tref}$
so that we start with the reference element $\Tref$.

\begin{lemma} \label{la_auxD}
  There exists
  \[
     \projDref:\;\left\{\begin{array}{cll}
        \HDivdivref{\Tref} &\to &\PP^{4,s}(\Tref) \times \bP^3(\Tref),\\
        (\Xiref,\btauref)  &\mapsto &
        \projDref(\Xiref,\btauref) = (\projDQref(\Xiref,\btauref),
                                      \projDtauref(\Xiref,\btauref))
     \end{array}\right.
  \]
  such that
  \begin{alignat}{2}
    \label{eq:auxComb:a}
    \dual{\uref}{\projDref(\Xiref,\btauref)}_{\partial\Tref} &=
    \dual{\uref}{(\Xiref,\btauref)}_{\partial\Tref}
    &&\forall\uref\in P^{3,2}_c(\partial\Tref),\\
    \label{eq:auxComb:b}
    \vdual{\Mref}{\projDQref(\Xiref,\btauref)}_{\Tref} &= \vdual{\Mref}{\Xiref}_{\Tref}
    &&\forall\Mref\in\PP^{0,s}(\Tref),\\
    \label{eq:auxComb:c}
    \vdual{\bthetaref}{\Div\projDQref(\Xiref,\btauref)-\projDtauref(\Xiref,\btauref)}_{\Tref} &=
    \vdual{\bthetaref}{\Div\Xiref-\btauref}_{\Tref}
    \quad&&\forall\bthetaref\in \bP^3(\Tref)
  \end{alignat}
  for any $(\Xiref,\btauref)\in\HDivdivref{\Tref}$.
  Moreover, the following bound holds true,
  \begin{align} \label{eq:auxComb:bound}
    \norm{\projDref(\Xiref,\btauref)}{\Divdivref,\Tref} \lesssim \norm{(\Xiref,\btauref)}{\Divdivref,\Tref}
    \quad\forall (\Xiref,\btauref)\in\HDivdivref{\Tref}.
  \end{align}
\end{lemma}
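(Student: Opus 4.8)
The plan is to build $\projDref$ on the reference element through a local finite-dimensional mixed (saddle-point) problem, in the spirit of Nagaraj, Petrides and Demkowicz \cite{NagarajPD_17_CDF}. Set $Y:=\PP^{4,s}(\Tref)\times\bP^3(\Tref)$ (with $\dim Y=45+20=65$) and $Z:=P^{3,2}_c(\partial\Tref)\times\PP^{0,s}(\Tref)\times\bP^3(\Tref)$ (with $\dim Z=12+3+20=35$), let $a(\cdot,\cdot)$ be the symmetric positive definite bilinear form on $Y$ associated with the squared norm $\|\cdot\|_{\Divdivref,\Tref}^2$, and define $c:\;Y\times Z\to\R$ by
\[
   c\bigl((\Xiref_h,\btauref_h),(\uref,\Mref,\bthetaref)\bigr)
   := \dual{\uref}{(\Xiref_h,\btauref_h)}_{\partial\Tref}
      + \vdual{\Mref}{\Xiref_h}_{\Tref}
      + \vdual{\bthetaref}{\Div\Xiref_h-\btauref_h}_{\Tref}.
\]
Each term extends to arguments in $\HDivdivref{\Tref}$, hence in particular to the data: the first because it is exactly the duality \eqref{trGG2T} defining $\ttraceGG{\Tref}$, the third because $\Div\Xiref-\btauref\in\bL_2(\Tref)$ whenever $(\Xiref,\btauref)\in\HDivdivref{\Tref}$. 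Given $(\Xiref,\btauref)\in\HDivdivref{\Tref}$, I would let $y_h\in Y$, $p\in Z$ solve
\begin{alignat*}{2}
   a(y_h,y) + c(y,p) &= 0 && \quad\forall y\in Y,\\
   c(y_h,q) &= c\bigl((\Xiref,\btauref),q\bigr) && \quad\forall q\in Z,
\end{alignat*}
and define $\projDref(\Xiref,\btauref):=y_h=:\bigl(\projDQref(\Xiref,\btauref),\projDtauref(\Xiref,\btauref)\bigr)$, which depends linearly on the data.

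\textbf{Verification of the assertions.} Choosing $q=(\uref,0,0)$, $q=(0,\Mref,0)$ and $q=(0,0,\bthetaref)$ in the second equation reproduces \eqref{eq:auxComb:a}, \eqref{eq:auxComb:b} and \eqref{eq:auxComb:c} term by term; for \eqref{eq:auxComb:a} one uses, as for $\traceGG{}$, that $\dual{\uref}{\cdot}_{\partial\Tref}$ depends on $\uref$ only through the trace pair it encodes, so an element $\uref\in P^{3,2}_c(\partial\Tref)$ may be inserted directly. Since $a$ is an inner product on the finite-dimensional space $Y$, the primal block is automatically invertible, and once the inf--sup condition for $c$ is in place (see below) standard saddle-point theory gives unique solvability together with
\[
   \|\projDref(\Xiref,\btauref)\|_{\Divdivref,\Tref}
   \;\lesssim\; \sup_{0\neq q\in Z}\frac{c\bigl((\Xiref,\btauref),q\bigr)}{\|q\|_Z}
   \;\lesssim\; \|(\Xiref,\btauref)\|_{\Divdivref,\Tref},
\]
with $\|\cdot\|_Z$ a fixed norm on $Z$; the last inequality follows from Cauchy--Schwarz and the boundedness of the $\ttraceGG{\Tref}$-duality noted after \eqref{trGG2T}. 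All constants depend only on $\Tref$ -- no mesh size enters -- which is \eqref{eq:auxComb:bound}.

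\textbf{Expected main obstacle.} The one nontrivial point is the inf--sup condition for $c$, i.e.\ the injectivity of the off-diagonal (constraint) block: $c(\cdot,q)=0$ on $Y$ must force $q=0$. Equivalently, the $35$ linear functionals on $Y$ associated with a basis of $Z$ are linearly independent, which is compatible with $\dim Y=65$. The two moment families are benign: $(\Xiref_h,\btauref_h)\mapsto\bigl(\vdual{\Mref_k}{\Xiref_h}_{\Tref}\bigr)_k$ is already onto $\R^3$ using $\Xiref_h\in\PP^{0,s}(\Tref)$ alone, and then $(\Xiref_h,\btauref_h)\mapsto\bigl(\vdual{\bthetaref_\ell}{\Div\Xiref_h-\btauref_h}_{\Tref}\bigr)_\ell$ is onto $\R^{20}$ using $\btauref_h\in\bP^3(\Tref)$ alone, so in a suitable basis the constraint matrix is block triangular with two automatically invertible diagonal blocks -- exactly the r\^ole played by the block $\Amat_1$ in the proof of Lemma~\ref{lem:auxGG}. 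What remains is to show that the residual block, which couples the twelve boundary trace functionals $\dual{\uref_j}{\cdot}_{\partial\Tref}$ to the part of $Y$ not pinned down by the moment conditions, is nonsingular. I would fix explicit bases of $P^{3,2}_c(\partial\Tref)$, $\PP^{4,s}(\Tref)$ and $\bP^3(\Tref)$, evaluate the duality through \eqref{trGG2T} (a boundary integral, since all arguments are polynomials), and verify the resulting full-rank condition by a finite rational computation in MATLAB. I expect this rank check, rather than the abstract saddle-point argument, to be the only genuinely delicate step.
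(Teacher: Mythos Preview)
Your proposal is correct and follows essentially the same route as the paper: a norm-minimizing saddle-point construction in the style of \cite{NagarajPD_17_CDF}, with unique solvability reduced to injectivity of the constraint block, which the paper likewise verifies by an unreported direct (MATLAB) computation on $\Tref$. Your block-triangular reduction of the constraint matrix is slightly more detailed than what the paper gives, but the endpoint---a finite rank check on the reference element---is identical.
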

\begin{proof}
  We follow the procedure from Nagaraj \emph{et al.}~\cite{NagarajPD_17_CDF}.
  Specifically, we seek the element $\projDref(\Xiref,\btauref)$ with minimal norm
  \begin{align*}
    \projDref(\Xiref,\btauref)
    =
    \argmin\limits_{(\Xiref^*,\btauref^*)\in \PP^{4,s}(\Tref)\times\bP^3(\Tref)}
    \norm{(\Xiref^*,\btauref^*)}{\Divdivref,\Tref}
  \end{align*}
  subject to the conditions~\eqref{eq:auxComb:a}--\eqref{eq:auxComb:c}, that is,
  \begin{alignat*}{2}
    &\ip{(\Xiref^*,\btauref^*)}{(\delta\Xiref,\delta\btauref)}_{\HDivdivref{\Tref}} 
    \\ &\qquad\qquad+ \dual{\uref}{(\delta\Xiref,\delta\btauref)}_{\partial\Tref} 
    + \vdual{\Mref}{\delta\Xiref}_{\Tref}
    + \vdual{\bthetaref}{\Div\delta\Xiref-\delta\btauref}_{\Tref} &\,=\,& 0, \\
    & \dual{\delta\uref}{(\Xiref^*,\btauref^*)}_{\partial\Tref} &\,=\,&
    \dual{\delta\uref}{(\Xiref,\btauref)}_{\partial\Tref}, \\
    &  \vdual{\delta\Mref}{\Xiref^*}_{\Tref} &\,=\,& \vdual{\delta\Mref}{\Xiref}_{\Tref}, \\
    & \vdual{\delta\bthetaref}{\Div\Xiref^*-\btauref^*}_{\Tref} &\,=\,& 
      \vdual{\delta\bthetaref}{\Div\Xiref-\btauref}_{\Tref}
  \end{alignat*}
  for all $(\delta\Xiref,\delta\btauref) \in \PP^{4,s}(\Tref) \times \bP^3(\Tref)$, $\delta\uref \in
  P^{3,2}_c(\Tref)$, $\delta\Mref\in \PP^{0,s}(\Tref)$, $\delta\bthetaref\in \bP^3(\Tref)$.
  This is a linear system with matrix
  \begin{align*}
    \begin{pmatrix}
      \Amat & \Cmat \\
      \Cmat^\transp & 0
    \end{pmatrix}
  \end{align*}
  where $\Amat$ denotes the matrix associated to the inner product $\ip{\cdot}{\cdot}_{\HDivdivref{\Tref}}$.
  Clearly, the mixed formulation admits a unique solution if $\Cmat$ is injective.
  In our case this can be verified by some direct calculations (not shown).
  Since the solution $\projDref(\Xiref,\btauref) := (\Xiref^*,\btauref^*)$
  of the mixed problem depends continuously on the data, we conclude the boundedness
  estimate~\eqref{eq:auxComb:bound} with constants depending only on $\Tref$.
\end{proof}

We are now ready to conclude the existence of our second Fortin operator.

\begin{lemma} \label{la_projD}
  There exists
  \[
     \projD:\;\left\{\begin{array}{cll}
        \HDivdiv{\cT} &\to &\PP^{4,s}(\cT) \times \bP^3(\cT),\\
        (\XXi,\btau)  &\mapsto &
        \projD(\XXi,\btau) = (\projDQ(\XXi,\btau), \projDtau(\XXi,\btau))
     \end{array}\right.
  \]
  such that
  \begin{alignat}{2}
    \label{eq:comb:a}
    \dual{u}{\projD(\XXi,\btau)}_{\cS} &=
    \dual{u}{(\XXi,\btau)}_{\cS}
    &&\forall u\in U_\cS,\\
    \label{eq:comb:b}
    \vdual{\MM}{\projDQ(\XXi,\btau)} &= \vdual{\MM}{\XXi}
    &&\forall\MM\in\PP^{0,s}(\cT),\\
    \label{eq:comb:c}
    \vdual{\btheta}{\pwDiv\projDQ(\XXi,\btau)-\projDtau(\XXi,\btau)} &=
    \vdual{\btheta}{\pwDiv\XXi-\btau}
    \quad&&\forall\btheta\in\bP^3(\cT),\\
    \label{eq:comb:d}
    \vdual{z}{\pwdiv\projDtau(\XXi,\btau)} &=
    \vdual{z}{\pwdiv\btau}
    &&\forall z\in P^2(\cT)
  \end{alignat}
  for any $(\XXi,\btau)\in\HDivdiv{\cT}$.
  In particular, we have the commutativity properties
  \begin{align}
    \pwDiv\projDQ(\XXi,\btau)-\projDtau(\XXi,\btau)
    &= \Pi^3(\pwDiv\XXi-\btau), \label{eq:comb:com1}\\
    \pwdiv\projDtau(\XXi,\btau) &= \Pi^2 \pwdiv\btau. \label{eq:comb:com2}
  \end{align}
  Moreover, 
  \begin{align*}
    \norm{\projD(\XXi,\btau)}{\Divdiv,\cT} \lesssim \norm{(\XXi,\btau)}{\Divdiv,\cT}
    \quad\forall (\XXi,\btau)\in\HDivdiv{\cT}.
  \end{align*}
  Here, $\Pi^p:\;L_2(\Omega)\to P^p(\cT)$ denotes the $L_2(\Omega)$-projection.
\end{lemma}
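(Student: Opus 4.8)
The plan is to obtain $\projD$ from the reference-element operator $\projDref$ of Lemma~\ref{la_auxD} by transporting it to each element $T\in\cT$ via the isomorphisms $\trafo_T$ and $\piola_T$ of Lemma~\ref{la_trafo2}, and then to upgrade the test-space dof control from $P^{3,2}_c(\partial\Tref)$ to the full global trace space $U_\cS$ by a standard edge-bubble correction on $\pwdiv$. Concretely, for $(\XXi,\btau)\in\HDivdiv{\cT}$ and $T\in\cT$ set $(\Xiref,\btauref):=(\trafo_T^{-1}\XXi,\piola_T^{-1}\btau)\in\HDivdivref{\Tref}$, apply $\projDref$, and transform back, defining a preliminary operator
\[
   \projD^0(\XXi,\btau)|_T := \bigl(\trafo_T\projDQref(\Xiref,\btauref),\ \piola_T\projDtauref(\Xiref,\btauref)\bigr).
\]
First I would check that \eqref{eq:comb:a}--\eqref{eq:comb:c} hold for $\projD^0$: the trace identity \eqref{eq:auxComb:a} pulls back to \eqref{eq:comb:a} using the boundary invariance in Lemma~\ref{la_trafo2} together with the observation from \S\ref{sec_basis} that $u\in U_\cS$ has $\traceGG{\Tref}\uref\in P^{3,2}_c(\partial\Tref)$ (this is precisely why $\projDref$ was built with the enlarged space $P^{3,2}_c$ rather than $P^{3,1}_c$); \eqref{eq:comb:b} and \eqref{eq:comb:c} pull back from \eqref{eq:auxComb:b}, \eqref{eq:auxComb:c} since $\trafo_T$ maps $\PP^{0,s}$ to $\PP^{0,s}$ up to the factor $|J_T|$, $\piola_T$ maps $\bP^3$ to $\bP^3$, and $\Div\XXi-\btau$ transforms by $\Bmat_T$ (Lemma~\ref{la_trafo2}), so the $\bP^3(T)$-orthogonality of $\Div\XXi-\btau$ against its projection is preserved; the uniform bound \eqref{eq:auxComb:bound} together with the scaling relations of Lemma~\ref{la_trafo2} gives $\norm{\projD^0(\XXi,\btau)}{\Divdiv,\cT}\lesssim\norm{(\XXi,\btau)}{\Divdiv,\cT}$.

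The one property not yet available is \eqref{eq:comb:d}, testing $\pwdiv\projDtau$ against $P^2(\cT)$. To fix this I would add a divergence-free-type correction: for each $T$ choose $\bphi_T\in\bP^3(T)\cap\bH^1_0(T)$ (built from the cubic bubble times constants, or more simply $\bphi_T=\grad(\eta_1\eta_2\eta_3)\,\times$ suitable constants) such that $\btau\mapsto\bphi_T$ fixes $\vdual{z}{\div\bphi_T}_T$ for $z\in P^2(T)$ modulo constants, and set
\[
   \projDtau(\XXi,\btau)|_T := \projDtauref\text{-part transformed} \ +\ \bphi_T,
\]
with $\bphi_T$ chosen so that $\vdual{z}{\pwdiv(\projDtau(\XXi,\btau)-\btau)}_T=0$ for all $z\in P^2(T)$. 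Because $\bphi_T$ has vanishing trace on $\partial T$ it does not disturb \eqref{eq:comb:a}; because $\Div\bphi_T$ need not vanish, one must simultaneously correct $\projDQ$ or, more cleanly, absorb the correction into a second small bubble in $\projDQ$ so that \eqref{eq:comb:c} is maintained — alternatively, note that $P^2(T)$-testing of $\div\btau$ only constrains $\div\btau$ up to its $P^2$-projection, and $\div\bP^3(T)$ already contains $P^2(T)$, so the correction is solvable locally by an easy bubble argument with a uniformly bounded solution operator. The commutativity identities \eqref{eq:comb:com1}, \eqref{eq:comb:com2} then follow by testing \eqref{eq:comb:c} against all $\btheta\in\bP^3(\cT)$ and \eqref{eq:comb:d} against all $z\in P^2(\cT)$, identifying the right-hand sides as the stated $L_2$-projections.

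The main obstacle I anticipate is the interplay between the four conditions \eqref{eq:comb:a}--\eqref{eq:comb:d} under the bubble correction: once $\projD^0$ is fixed on the reference element, any local correction added to restore \eqref{eq:comb:d} must be invisible to the trace pairing \eqref{eq:comb:a} (hence supported in $\bH^1_0(T)$) and must not destroy the $\PP^{0,s}$-orthogonality \eqref{eq:comb:b} of the tensor part nor the $\bP^3$-orthogonality \eqref{eq:comb:c}; arranging a single local correction operator that is simultaneously orthogonal-preserving and uniformly bounded requires care, and is the analogue of the ``off-diagonal injectivity'' check flagged in the introduction. A clean way around it is to bake \eqref{eq:comb:d} into the reference-element mixed problem of Lemma~\ref{la_auxD} from the start — i.e.\ add the constraint $\vdual{\delta z}{\divref\btauref^*}_{\Tref}=\vdual{\delta z}{\divref\btauref}_{\Tref}$ for $\delta z\in P^2(\Tref)$ to that system and re-verify injectivity of the enlarged $\Cmat$ (again by direct/MATLAB computation) — after which $\projD$ is simply the element-wise transform of $\projDref$ with no correction needed, and all of \eqref{eq:comb:a}--\eqref{eq:comb:d}, the commutativity relations, and the uniform bound follow immediately from Lemmas~\ref{la_auxD} and~\ref{la_trafo2}. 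This is the route I would actually take.
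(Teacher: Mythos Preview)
Your construction of $\projD$ as the element-wise transform of $\projDref$ is exactly what the paper does, and your verification of \eqref{eq:comb:a}--\eqref{eq:comb:c} via Lemma~\ref{la_trafo2} and the inclusion $U_{\partial T}\subset P^{3,2}_c(\partial T)$ is correct. But you then take an unnecessary detour: you treat \eqref{eq:comb:d} as a missing constraint that must be enforced separately (by a bubble correction or by enlarging the reference-element mixed system), whereas in fact \eqref{eq:comb:d} is a \emph{consequence} of \eqref{eq:comb:a}--\eqref{eq:comb:c}. For $z\in P^2(T)$ one has $\ttraceGG{T}(z)\in P^{3,2}_c(\partial T)$, $\grad z\in\bP^1(T)\subset\bP^3(T)$, and $\Grad\grad z\in\PP^{0,s}(T)$; writing out the definition \eqref{trGG2T},
\[
   \vdual{z}{\div\btau}_T
   =\dual{\ttraceGG{T}z}{(\XXi,\btau)}_{\partial T}
    -\vdual{\grad z}{\btau-\Div\XXi}_T
    +\vdual{\Grad\grad z}{\XXi}_T,
\]
each term on the right is preserved when $(\XXi,\btau)$ is replaced by $\projD(\XXi,\btau)$, by \eqref{eq:comb:a}, \eqref{eq:comb:c}, \eqref{eq:comb:b} respectively. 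This is precisely why Lemma~\ref{la_auxD} imposes \eqref{eq:auxComb:c} with the seemingly over-generous test space $\bP^3(\Tref)$: it makes \eqref{eq:comb:d} automatic and yields both commutativity relations at once. No correction step, and no enlargement of the reference mixed problem, is needed.

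A related point: your claim that the uniform bound on $\projD^0$ follows from \eqref{eq:auxComb:bound} plus scaling alone is not quite right. Transporting the reference estimate by Lemma~\ref{la_trafo2} produces, for instance, a contribution $h_T^{-1}\|\XXi\|_T$ in the bound for $\|\Div\projDQ-\projDtau\|_T$ and $h_T^{-2}\|\XXi\|_T$ in the bound for $\|\div\projDtau\|_T$, both of which blow up under refinement. The paper avoids this by using the commutativity identities \eqref{eq:comb:com1}, \eqref{eq:comb:com2} to bound those two terms directly by $\|\Div\XXi-\btau\|_T$ and $\|\div\btau\|_T$; only the $L_2$-part $\|\projDQ\|_T$ is handled by scaling. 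Since \eqref{eq:comb:com2} rests on \eqref{eq:comb:d}, recognising that \eqref{eq:comb:d} comes for free is what closes the boundedness argument as well.
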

\begin{proof}
  It suffices to consider one element $T\in\cT$.
  Let $(\XXi,\btau)\in \HDivdiv{T}$ with $\Xiref = \trafo_T^{-1}\XXi$, $\btauref = \piola_T^{-1}\btau$.
  We define $\projD(\XXi,\btau) := (\projDQ(\XXi,\btau),\projDtau(\XXi,\btau))$ with
  \begin{align*}
    \projDQ(\XXi,\btau) := \trafo_T(\projDQref(\Xiref,\btauref)),
    \text{ and }
    \projDtau(\XXi,\btau) := \piola_T(\projDtauref(\Xiref,\btauref)).
  \end{align*}
  To see the first identity~\eqref{eq:comb:a} we consider $u\in H^2(T)$ with
  $\traceGG{T}u\in P^{3,2}_c(\partial T)$ and $\uref = u\circ F_T$.
  The definition of $\traceDD{T}$, cf.~\eqref{trDDT}, and the properties of the
  transformations $\trafo_T,\piola_T$ yield
  \begin{align*}
    \dual{\traceDD{T}u}{(\XXi,\btau)}_{\partial T}
    &= \vdual{u}{\div\btau}_T + \vdual{\grad u}{\btau-\Div\XXi}_T - \vdual{\Grad\grad u}{\XXi}_T \\
    &= \vdual{\uref}{\divref\btauref}_{\Tref}
       + \vdual{\gradref\uref}{\btauref-\Divref\Xiref}_{\Tref} 
       - \vdual{\Gradref\gradref\uref}{\Xiref}_{\Tref} \\
    &= \dual{\traceDD{\Tref}\uref}{(\Xiref,\btauref)}_{\partial\Tref}.
  \end{align*}
  The same argumentation shows that
  \begin{align*}
    \dual{\traceDD{T}u}{\projD(\XXi,\btau)}_{\partial T} =
    \dual{\traceDD{\Tref}\uref}{\projDref(\Xiref,\btauref)}_{\partial\Tref}.
  \end{align*}
  Therefore,~\eqref{eq:comb:a} follows from~\eqref{eq:auxComb:a} and
  $U_{\partial T}=P^{3,1}_c(\partial T)\subset P^{3,2}_c(\partial T)$.
  
  To see~\eqref{eq:comb:b} and \eqref{eq:comb:c} we note that both transformations
  $\trafo_T$ and $\piola_T$ map polynomial tensor and vector functions, respectively,
  to polynomial functions of the same degree. Therefore, it is straightforward to show 
  that~\eqref{eq:comb:b} follows from~\eqref{eq:auxComb:b}. 
  Furthermore, observe that
  \begin{align*}
    |J_T| (\Div\XXi-\btau)\circ F_T = \Bmat_T(\Divref\Xiref-\btauref).
  \end{align*}
  Since $\Bmat_T$ is a matrix with constant coefficients,
  identity~\eqref{eq:comb:c} follows from~\eqref{eq:auxComb:c}.

  Note that the commutativity property~\eqref{eq:comb:com1} directly follows from~\eqref{eq:comb:c}.
  For the proof of the commutativity property~\eqref{eq:comb:com2} and
  identity~\eqref{eq:comb:d} let $z\in P^2(T)$.
  Note that $\ttraceGG{T}(z)=\traceGG{T}(z) \in P^{3,2}_c(\partial T)$,
  $\grad z\in \bP^1(T)$, $\Grad\grad z\in \PP^{0,s}(T)$.
  The definition of $\ttraceGG{T}$, cf.~\eqref{trGG2T}, and identities~\eqref{eq:comb:a}--\eqref{eq:comb:c} yield
  \begin{align*}
  \lefteqn{
    \vdual{z}{\div\projDtau(\XXi,\btau)}_T
  }\\
    &=
    \dual{\ttraceGG{T}z}{\projD(\XXi,\btau)}_{\partial T}
    -\vdual{\nabla z}{\projDtau(\XXi,\btau)-\Div\projDQ(\XXi,\btau)}_T
    +\vdual{\Grad\grad z}{\projDQ(\XXi,\btau)}_T \\
    &= \dual{\ttraceGG{T}z}{(\XXi,\btau)}_{\partial T}
    -\vdual{\nabla z}{\btau-\Div\XXi}_T + \vdual{\Grad\grad z}{\XXi}_T
    = \vdual{z}{\div\btau}_T.
  \end{align*}
  This shows~\eqref{eq:comb:d} and~\eqref{eq:comb:com2}.

  Finally, to show the boundedness we first consider the $L_2$-part of the norm.
  Scaling arguments and the boundedness of $\projDref$ prove
  \begin{align*}
    \norm{\projDQ(\XXi,\btau)}T \simeq h_T \norm{\projDQref(\Xiref,\btauref)}{\Tref}
    &\lesssim h_T(\norm{\Xiref}{\Tref} + \norm{\Divref\Xiref-\btauref}{\Tref} + \norm{\divref\btauref}{\Tref}) \\
    &\simeq \norm{\XXi}{T} + h_T \norm{\Div\XXi-\btau}{T} + h_T^2 \norm{\div\btau}{T}.
  \end{align*}
  To bound the other terms in the norm we employ the commutativity
  properties~\eqref{eq:comb:com1},\eqref{eq:comb:com2} to conclude that
  \begin{align*}
    \norm{\Div\projDQ(\XXi,\btau)-\projDtau(\XXi,\btau)}{T} + \norm{\div\projDtau(\XXi,\btau)}{T}
    &= \norm{\Pi^3(\Div\XXi-\btau)}T + \norm{\Pi^2\div\btau}T 
    \\ &\leq \norm{\Div\XXi-\btau}T + \norm{\div\btau}T.
  \end{align*}
  Combining the last two estimates finishes the proof.
\end{proof}

\begin{remark} \label{rem_Fortin_Divdiv}
We note that relations \eqref{eq:comb:c} and \eqref{eq:comb:d} have been used in
the proof of Theorem~\ref{thm_DPG_discrete} to verify the corresponding projection
properties \eqref{pf_DPG_discrete_2} and \eqref{pf_DPG_discrete_3}. There, piecewise
constant functions $\btheta_h\hateq\btheta\in\bP^0(\cT)$ resp. $u_h\hateq z\in P^0(\cT)$ are sufficient.
Piecewise polynomial functions $\btheta\in\bP^3(\cT)$ of higher degrees are needed in
\eqref{eq:comb:c} to prove the boundedness of the Fortin operator $\projD$ via
the commutativity properties \eqref{eq:comb:com1}, \eqref{eq:comb:com2}.
Furthermore, in this paper, relation~\eqref{eq:comb:d} for $z\in P^0(\cT)$ is sufficient.
\end{remark}

\subsection{Fortin operator for the test space $\HdDiv{\cT}$} \label{sec_projDD}

We now study a Fortin operator for test functions of $\HdDiv{\cT}$. As discussed
in \S\ref{sec_DPG_discrete2}, it guarantees well-posedness and quasi-optimal convergence
of the fully discrete scheme from \cite{FuehrerHN_UFK} when the polynomial degree
of the test space component for $\HdDiv{\cT}$ is increased from two to four.

Let us start with the construction on the reference element $\Tref$.

\begin{lemma}\label{lem:auxDD}
  There exists $\auxDD:\;\HdDivref{\Tref}\to \PP^{4,s}(\Tref)$ such that
  \begin{alignat}{2}
    \label{eq:auxDD:idbou}
    \dual{\uref}{\auxDD\Qref}_{\partial\Tref} &= \dual{\uref}{\Qref}_{\partial\Tref}
    \quad&&\forall \uref\in P^{3,2}_c(\partial \Tref),\\
    \label{eq:auxDD:idvol}
    \vdual{\Mref}{\auxDD\Qref}_{\Tref} &= \vdual{\Mref}{\Qref}_{\Tref}
    &&\forall \Mref\in\PP^{0,s}(\Tref)
  \end{alignat}
  for any $\Qref\in \HdDivref{\Tref}$.
  Moreover,
  \begin{align}\label{eq:auxDD:bound}
    \norm{\auxDD\Qref}{\dDivref,\Tref} \lesssim 
    \norm{\Qref}{\dDivref,\Tref} \quad\forall \Qref\in \HdDivref{\Tref}.
  \end{align}
\end{lemma}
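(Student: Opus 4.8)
The plan is to mimic the construction of the auxiliary operator $\auxGG$ from Lemma~\ref{lem:auxGG}, but now producing a symmetric tensor in $\PP^{4,s}(\Tref)$ rather than a scalar polynomial, and to impose both the trace-matching condition \eqref{eq:auxDD:idbou} against $P^{3,2}_c(\partial\Tref)$ and the volume condition \eqref{eq:auxDD:idvol} against $\PP^{0,s}(\Tref)$. Following Nagaraj \emph{et al.}~\cite{NagarajPD_17_CDF}, I would define $\auxDD\Qref$ as the minimizer
\begin{align*}
   \auxDD\Qref
   =
   \argmin_{\Xiref^*\in\PP^{4,s}(\Tref)} \norm{\Xiref^*}{\dDivref,\Tref}
\end{align*}
subject to the constraints
\begin{align*}
   \dual{\delta\uref}{\Xiref^*}_{\partial\Tref} &= \dual{\delta\uref}{\Qref}_{\partial\Tref}
   \quad\forall\delta\uref\in P^{3,2}_c(\partial\Tref),\\
   \vdual{\delta\Mref}{\Xiref^*}_{\Tref} &= \vdual{\delta\Mref}{\Qref}_{\Tref}
   \quad\forall\delta\Mref\in\PP^{0,s}(\Tref).
\end{align*}
The Euler--Lagrange system is a saddle-point problem with the block structure
$\left(\begin{smallmatrix}\Amat & \Cmat\\ \Cmat^\transp & 0\end{smallmatrix}\right)$,
where $\Amat$ is the Gram matrix of the $\HdDivref{\Tref}$-inner product on $\PP^{4,s}(\Tref)$ and is automatically invertible, while $\Cmat$ encodes the $24$ functionals: the $\dim P^{3,2}_c(\partial\Tref)=12$ trace degrees of freedom together with the $\dim\PP^{0,s}(\Tref)=3$ volume pairings, giving $15$ constraints against the $\dim\PP^{4,s}(\Tref)=45$ unknowns. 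The operator $\auxDD$ exists and is bounded (with a constant depending only on $\Tref$) as soon as $\Cmat$ has full column rank, i.e.\ the $15$ functionals are linearly independent on $\PP^{4,s}(\Tref)$.

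The steps I would carry out, in order: (1) verify the dimension counts $\dim\PP^{4,s}(\Tref)=45$, $\dim P^{3,2}_c(\partial\Tref)=12$, $\dim\PP^{0,s}(\Tref)=3$, so the constraint system is underdetermined and a minimizer is the natural selection; (2) set up the mixed/saddle-point problem as above and invoke the standard fact that it is uniquely solvable once the off-diagonal block is injective; (3) check injectivity of $\Cmat$, i.e.\ that no nonzero $\Xiref^*\in\PP^{4,s}(\Tref)$ has $\traceGG{\Tref}$-trace annihilating all of $P^{3,2}_c(\partial\Tref)$ \emph{and} is $L_2(\Tref)$-orthogonal to all constant symmetric tensors --- here I would argue as in Lemma~\ref{la_auxD}, either by a direct argument or, if that proves unwieldy, by the MATLAB verification already announced in the introduction; (4) conclude \eqref{eq:auxDD:idbou}, \eqref{eq:auxDD:idvol} from the constraints and \eqref{eq:auxDD:bound} from continuous dependence of the saddle-point solution on the data. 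Note that, unlike in Lemma~\ref{lem:auxGG}, no explicit dual basis needs to be exhibited; the minimal-norm/mixed-problem formulation handles everything abstractly, which is cleaner here because the tensor setting makes an explicit dual basis cumbersome.

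The main obstacle is step~(3): establishing that the combined family of trace functionals on $P^{3,2}_c(\partial\Tref)$ and volume functionals against $\PP^{0,s}(\Tref)$ is linearly independent when restricted to $\PP^{4,s}(\Tref)$. The delicate point is that the trace operator $\traceGG{\Tref}$ only sees $v|_{\partial\Tref}$ and $\partial_{\nn}v|_{\partial\Tref}$ of a potential $v\in H^2(\Tref)$ realizing the trace, so one must be careful about which part of $\XXi$'s boundary data (essentially $\nn\cdot\XXi\nn$ and $\nn\cdot\Div\XXi+\partial_\bt(\bt\cdot\XXi\nn)$ on each edge, cf.~\eqref{UdDivT}) is actually controlled; injectivity against $P^{3,2}_c$ rather than the smaller $P^{3,1}_c$ is what makes the count work out, exactly as in \S\ref{sec_basis}. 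I expect this to reduce to a finite linear-algebra check on the reference triangle that can either be done by hand with some effort (exploiting the upper-triangular-block structure as in Lemma~\ref{lem:auxGG}, where the bubble enrichment was the key trick) or delegated to the symbolic computation mentioned in the introduction. Everything else --- the scaling/boundedness \eqref{eq:auxDD:bound}, and later globalization and the kernel correction analogous to $\projGG$ --- follows by routine arguments already rehearsed for $\projGG$ and $\projD$.
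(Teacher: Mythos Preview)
Your approach is exactly the paper's: define $\auxDD\Qref$ as the $\HdDivref{\Tref}$-norm minimizer in $\PP^{4,s}(\Tref)$ subject to the constraints \eqref{eq:auxDD:idbou}--\eqref{eq:auxDD:idvol}, write the Euler--Lagrange system in block form $\left(\begin{smallmatrix}\Amat & \Cmat\\ \Cmat^\transp & 0\end{smallmatrix}\right)$, and reduce well-posedness to injectivity of $\Cmat$, which the paper also delegates to an unreported direct computation.

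One genuine slip to fix: your two descriptions of the injectivity condition do not agree, and the second one is wrong. In the first paragraph you correctly say that $\Cmat$ (the $45\times15$ block) must have full column rank, i.e.\ the $15$ constraint functionals are linearly independent as elements of $(\PP^{4,s}(\Tref))'$; concretely, if $(\delta\Mref,\delta\uref)\in\PP^{0,s}(\Tref)\times P^{3,2}_c(\partial\Tref)$ satisfies $\vdual{\delta\Mref}{\Xiref}_{\Tref}+\dual{\delta\uref}{\Xiref}_{\partial\Tref}=0$ for all $\Xiref\in\PP^{4,s}(\Tref)$, then $\delta\Mref=0$ and $\delta\uref=0$. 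But in step~(3) you instead ask that no nonzero $\Xiref^*\in\PP^{4,s}(\Tref)$ be annihilated by all $15$ functionals --- that is injectivity of $\Cmat^\transp$, which is impossible for a $15\times45$ matrix and is not what the saddle-point theory requires. Keep the first formulation; that is what you (and the paper) actually need to check. Also, the ``$24$ functionals'' is a typo for $15$.
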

\begin{proof}
  We follow the same idea as in Lemma~\ref{la_auxD} and define $\auxDD\Qref$ for given
  $\Qref\in\HdDivref{\Tref}$ as the solution $\Qref^*\in\PP^{4,s}(\Tref)$ of the mixed problem
  \begin{alignat*}{2}
    &\ip{\Qref^*}{\delta\Qref}_{\HdDivref{\Tref}} + \vdual{\Mref}{\delta\Qref}_{\Tref}
    + \dual{\uref}{\delta\Qref}_{\partial\Tref} &\,=\,& 0, \\
    &\vdual{\delta\Mref}{\Qref^*}_{\Tref} &\,=\,& \vdual{\delta\Mref}{\Qref}_{\Tref}, \\
    &\dual{\delta\uref}{\Qref^*}_{\partial\Tref} &\,=\,& 
    \dual{\delta\uref}{\Qref}_{\partial\Tref}
  \end{alignat*}
  for all $\delta\Qref\in\PP^{4,s}(\Tref)$, $\delta\Mref\in\PP^{0,s}(\Tref)$,
  $\delta\uref\in P^{3,2}_c(\partial \Tref)$.
  Here, $\ip{\cdot}{\cdot}_{\HdDivref{\Tref}}$ denotes the indicated inner product.
  The mixed problem is equivalent to the norm minimization problem subject to the
  constraints~\eqref{eq:auxDD:idbou},\eqref{eq:auxDD:idvol}.
  As in the proof of Lemma~\ref{la_auxD}, the mixed problem has a linear system with matrix of the form
  \begin{align*}
    \begin{pmatrix}
      \Amat & \Cmat \\
      \Cmat^\transp & 0
    \end{pmatrix}.
  \end{align*}
  Here, $\Amat$ corresponds to the inner product in $\HdDivref{\Tref}$.
  The mixed problem admits a unique solution if $\Cmat$ is injective.
  As before, this can be verified by direct calculations (not shown).

  Finally, boundedness follows with the same argumentation as in Lemma~\ref{la_auxD}.
\end{proof}

We are now ready to conclude the existence of our third Fortin operator.

\begin{lemma} \label{la_projDD}
  There exists $\projDD:\;\HdDiv{\cT}\to \PP^{4,s}(\cT)$ such that
  \begin{alignat}{2}
    \label{eq:projDD:idbou}
    \dual{u}{\projDD\QQ}_{\cS} &= \dual{u}{\QQ}_{\cS}
    &&\forall u\in U_{\cS}, \\
    \label{eq:projDD:idvol}
    \vdual{\MM}{\projDD\QQ} &= \vdual{\MM}{\QQ}
    &&\forall \MM\in\PP^{0,s}(\cT), \\
    \label{eq:projDD:idvol2}
    \vdual{z}{\pwdiv\pwDiv\projDD\QQ} &= \vdual{z}{\pwdiv\pwDiv\QQ}
    \quad&&\forall z\in P^2(\cT)
  \end{alignat}
  for any $\QQ\in\HdDiv{\cT}$.
  Moreover,
  \begin{align}
    \norm{\projDD\QQ}{\dDiv,\cT} &\lesssim \norm{\QQ}{\dDiv,\cT} \quad\text{and} \nonumber \\
    \pwdiv\pwDiv\projDD\QQ &= \Pi^2\pwdiv\pwDiv\QQ, \label{eq:projDD:com}
  \end{align}
  for any $\QQ\in\HdDiv{\cT}$.
  Here, as before, $\Pi^p:\;L_2(\Omega)\to P^p(\cT)$ denotes the $L_2(\Omega)$-projection.
\end{lemma}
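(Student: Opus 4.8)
The plan is to transplant the reference-element operator $\auxDD$ from Lemma~\ref{lem:auxDD} to each physical element by the Piola--Kirchhoff transformation $\trafo_T$, exactly as in the proof of Lemma~\ref{la_projD}. Concretely, for $T\in\cT$ and $\QQ\in\HdDiv{T}$ I would set $\Qref:=\trafo_T^{-1}\QQ\in\HdDivref{\Tref}$ and define $\projDD\QQ|_T:=\trafo_T(\auxDD\Qref)$; since $\trafo_T$ preserves polynomial degree this lands in $\PP^{4,s}(T)$, and no interelement conformity is needed because the target is a broken space. Gluing over $\cT$ then gives the global operator.

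For the boundary identity~\eqref{eq:projDD:idbou} I would take $u\in H^2(T)$ with $\traceGG{T}u\in P^{3,2}_c(\partial T)$. By the observation recorded in \S\ref{sec_basis}, the pullback $\uref=u\circ F_T$ then also satisfies $\traceGG{\Tref}\uref\in P^{3,2}_c(\partial\Tref)$, and identity~\eqref{eq:idboundary} of Lemma~\ref{lem:trafo}, applied to $\QQ$ and to $\projDD\QQ=\trafo_T\auxDD\Qref$, reduces both physical dualities to reference-element ones; then~\eqref{eq:auxDD:idbou} gives $\dual{\traceDD{T}\projDD\QQ}{u}_{\partial T}=\dual{\traceDD{T}\QQ}{u}_{\partial T}$. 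Since $U_{\partial T}=P^{3,1}_c(\partial T)\subset P^{3,2}_c(\partial T)$, summing over $\cT$ yields~\eqref{eq:projDD:idbou}. For~\eqref{eq:projDD:idvol}, a change of variables using $|J_T|(\trafo_T\Qref)\circ F_T=\BB_T\Qref\BB_T^t$ turns $\vdual{\MM}{\trafo_T\Qref}_T$ into $\vdual{\widehat N}{\Qref}_{\Tref}$ with $\widehat N:=\BB_T^t(\MM\circ F_T)\BB_T$, which is again a constant symmetric tensor; applying this both to $\QQ$ and to $\projDD\QQ$ and invoking~\eqref{eq:auxDD:idvol} with test tensor $\widehat N$ gives the identity elementwise, hence after summation.

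Next I would derive~\eqref{eq:projDD:idvol2} and the commutativity~\eqref{eq:projDD:com} together. For $z\in P^2(T)$ one has $\traceGG{T}z\in P^{3,2}_c(\partial T)$ and $\Grad\grad z\in\PP^{0,s}(T)$, so the definition~\eqref{trDDT} of $\traceDD{T}$ gives, both for $\QQ$ and for $\projDD\QQ$,
\[
   \vdual{\div\Div\QQ}{z}_T=\dual{\traceDD{T}\QQ}{z}_{\partial T}+\vdual{\QQ}{\Grad\grad z}_T,
\]
and the two right-hand sides agree by the identities already established; summing over $\cT$ gives~\eqref{eq:projDD:idvol2}, and since $\div\Div\projDD\QQ|_T\in P^2(T)$ this forces $\pwdiv\pwDiv\projDD\QQ=\Pi^2\pwdiv\pwDiv\QQ$. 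Finally, boundedness follows by scaling: the relations~\eqref{eq:scalingDivDiv:a} together with~\eqref{eq:auxDD:bound} and $\|h_\cT\|_{L_\infty(\Omega)}=O(1)$ give $\norm{\projDD\QQ}{T}\lesssim\norm{\QQ}{\dDiv,T}$, while $\norm{\div\Div\projDD\QQ}{T}=\norm{\Pi^2\div\Div\QQ}{T}\le\norm{\div\Div\QQ}{T}$ by~\eqref{eq:projDD:com}; adding and summing gives $\norm{\projDD\QQ}{\dDiv,\cT}\lesssim\norm{\QQ}{\dDiv,\cT}$.

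The genuinely substantive point — the injectivity of the off-diagonal block $\Cmat$ in the mixed problem defining $\auxDD$ — is already handled in Lemma~\ref{lem:auxDD} by direct computation on $\Tref$, so the present proof is essentially bookkeeping. The one place that requires care is the trace class: an affine pullback of a function whose edgewise normal derivative is affine can have an edgewise quadratic normal derivative, which is exactly why the reference construction was set on $P^{3,2}_c$ rather than $P^{3,1}_c$; and, as in Lemma~\ref{la_projD}, one should not expect the $\div\Div$-part of the boundedness estimate to follow straight from~\eqref{eq:auxDD:bound} — it is obtained only once the commutativity property~\eqref{eq:projDD:com} is in hand.
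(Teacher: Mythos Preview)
Your proposal is correct and follows essentially the same route as the paper: elementwise pullback via $\trafo_T$, then the identities \eqref{eq:auxDD:idbou}--\eqref{eq:auxDD:idvol} on $\Tref$ and the trace-invariance \eqref{eq:idboundary} to get \eqref{eq:projDD:idbou}--\eqref{eq:projDD:idvol}, integration by parts with $z\in P^2(T)$ to get \eqref{eq:projDD:idvol2} and the commutativity \eqref{eq:projDD:com}, and finally scaling plus \eqref{eq:projDD:com} for the bound. Your explicit computation of the test tensor $\widehat N=\BB_T^t(\MM\circ F_T)\BB_T$ for \eqref{eq:projDD:idvol} and your remark on why the reference construction must sit on $P^{3,2}_c(\partial\Tref)$ are consistent with, and slightly more detailed than, the paper's argument.
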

\begin{proof}
  It suffices to define $\projDD$ element-wise and prove the identities and estimates element-wise. 
  Throughout let $T\in\cT$ be fixed. 
  For $\QQ\in\HdDiv{T}$ let $\Qref\in\HdDivref{\Tref}$ with $\QQ = \trafo_T(\Qref)$.
  Define $\projDD$ on $T$ by 
  \begin{align*}
    \projDD \QQ := \trafo_T(\auxDD\Qref) \in \PP^{4,s}(T).
  \end{align*}
  Let $u\in H^2(T)$ be such that $\traceGG{T}u\in P^{3,2}_c(\partial T)$, and let $\uref = u\circ F_T$.
  The definition of the trace operator and the properties of the transformation $\trafo_T$
  (Lemma~\ref{lem:trafo}) show that
  \begin{align*}
    \dual{\traceGG{T}u}{\projDD\QQ}_{\partial T} = \dual{\traceGG{\Tref}\uref}{\auxDD\Qref}_{\partial\Tref}
  \end{align*}
  The same argumentation yields
  \begin{align*}
    \dual{\traceGG{T}u}{\QQ}_{\partial T} = \dual{\traceGG{\Tref}\uref}{\Qref}_{\partial\Tref}.
  \end{align*}
  Therefore,~\eqref{eq:projDD:idbou} follows from~\eqref{eq:auxDD:idbou} and
  $U_{\partial T}\subset P^{3,2}_c(\partial T)$.
  For the second identity~\eqref{eq:projDD:idvol} note that for $\MM \in\PP^{0,s}(T)$ it holds
  $\Mref := \trafo_T^{-1}(\MM)\in\PP^{0,s}(T)$ as well.
  Then, it is easy to see that~\eqref{eq:projDD:idvol} follows from~\eqref{eq:auxDD:idvol}.
  For the proof of the last identity~\eqref{eq:projDD:idvol2} and the commutativity
  property~\eqref{eq:projDD:com} we use that for $z\in P^2(T)$ one has
  $\traceGG{T}z\in P^{3,2}_c(\partial T)$ and $\Grad\grad z\in \PP^{0,s}(T)$.
  The definition of $\traceGG{T}$, cf.~\eqref{trGGT}, and the identities established above show that
  \begin{align*}
    \vdual{z}{\div\Div\projDD\QQ}_T &= \vdual{\Grad\grad z}{\projDD\QQ}_T+\dual{\traceGG{T}z}{\projDD\QQ}_{\partial T}
    \\
    &= \vdual{\Grad\grad z}{\QQ}_T+\dual{\traceGG{T}z}{\QQ}_{\partial T} = \vdual{z}{\div\Div\QQ}_T.
  \end{align*}
  With the commutativity property~\eqref{eq:projDD:com} we see that
  \begin{align*}
    \norm{\div\Div\projDD\QQ}{T} = \norm{\Pi^2\div\Div\QQ}{T} \leq \norm{\div\Div\QQ}T.
  \end{align*}
  Thus, it only remains to bound the $L_2(T)$ part of the $\HdDiv{T}$-norm. 
  By scaling arguments (using the transformation $\trafo_T$) and by boundedness~\eqref{eq:auxDD:bound}
  we infer the relations
  \begin{align*}
    \norm{\projDD\QQ}{T}
    &\simeq h_T \norm{\auxDD\Qref}{\Tref}
    \lesssim h_T \norm{\Qref}{\Tref} + h_T \norm{\divref\Divref\Qref}{\Tref}
    \simeq \norm{\QQ}{T} + h_T^2 \norm{\div\Div\QQ}T.
  \end{align*}
  This finishes the proof.
\end{proof}

\begin{remark} \label{rem_Fortin_dDiv}
As in Remark~\ref{rem_Fortin_Divdiv}, we note that relation \eqref{eq:projDD:idvol2}
with $z\in P^0(\cT)$ is enough for the immediate orthogonality properties of the Fortin
operator $\projDD$, cf.~\eqref{orth_Fortin_dDiv_u} in~\S\ref{sec_DPG_discrete2}.
Higher polynomial degrees are required for the commutativity property \eqref{eq:projDD:com}
and the boundedness of $\projDD$.
\end{remark}

\section{A numerical example} \label{sec_num}

We present numerical experiments for a model problem with singular solution. It corresponds
to the second example in \cite{FuehrerHN_UFK}. The domain $\Omega$ and initial mesh $\cT$ are as indicated
in Figure~\ref{fig_mesh}.
The reentrant corner at $(x,y)=(0,0)$ has the interior angle $\tfrac3{4}\pi$.

\begin{figure}[htb]
\centerline{\includegraphics[width=0.4\textwidth]{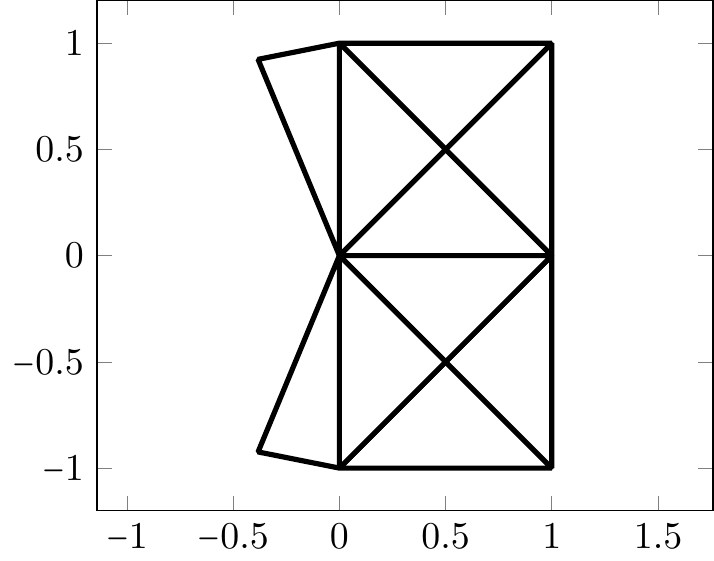}}
\caption{Geometry and initial mesh.}
\label{fig_mesh}
\end{figure}

We consider the manufactured solution
\begin{align*}
  u(r,\varphi) = r^{1+\alpha}(\cos( (\alpha+1)\varphi)+C \cos( (\alpha-1)\varphi))
\end{align*}
with polar coordinates $(r,\varphi)$ centered at the origin. It satisfies
\begin{align*}
  \div\Div\cC\Grad\grad u = 0 =: f
\end{align*}
with $\cC=I$ the identity,
and requires non-homogeneous boundary conditions specifying $u|_\Gamma$ and $\nabla u|_\Gamma$.
Selecting $f=0$ and $\cC=I$, we choose $\alpha$ and $C$ such that $u$ and its normal derivative
vanish on the edges that generate the incoming corner.
The approximate values are $\alpha\approx 0.67$ and $C\approx 1.23$.
It follows that $u\in H^{2+\alpha-\varepsilon}(\Omega)$,
$\btheta=\grad u\in H^{1+\alpha-\varepsilon}(\Omega)$,
$\MM = \Grad\grad u \in (H^{\alpha-\varepsilon}(\Omega))^{2\times 2}$ for $\varepsilon>0$.
Furthermore, $|\Div\MM(r,\varphi)|\simeq r^{\alpha-2}\not\in L_2(\Omega)$ so that
$\MM\in H(\div\Div,\Omega)$ and $\MM\notin \HH(\Div,\Omega)$ (the space of $\LL_2^s(\Omega)$-tensors
with divergence in $\bL_2(\Omega)$).

Now, for the numerical experiments, we take refinement steps with newest vertex bisection (NVB).
This generates families of shape-regular triangulations.
We either refine uniformly by dividing each triangle into four
sub-triangles of the same area, or we perform adaptive mesh refinement of the form
\begin{align*}
  \boxed{\texttt{SOLVE}} \quad\longrightarrow\quad
  \boxed{\texttt{ESTIMATE}} \quad\longrightarrow\quad
  \boxed{\texttt{MARK}} \quad\longrightarrow\quad
  \boxed{\texttt{REFINE}} \quad.
\end{align*}
As error indicators we use local (element) contributions $\eta(T)$ of the inherent (approximated) energy norm
of the DPG method,
\[
   \eta^2 :=
   \|B(\uu-\uu_h)\|_{\VV_h'}^2 = \sum_{T\in\cT} \|B(\uu-\uu_h)\|_{\VV_h(T)'}^2
   =: \sum_{T\in\cT} \eta(T)^2.
\]
Here, $\VV_h(T):=\VV_h|_T$, using that the discrete test space has a product structure associated
to the mesh. For an abstract analysis of this error estimator we refer to~\cite{CarstensenDG_14_PEC}.
For the marking step we use the bulk criterion
\begin{align*}
  \frac7{10} \eta^2 \leq \sum_{T\in\mathcal{M}} \eta(T)^2
\end{align*}
where $\mathcal{M}\subseteq\cT$ is the set of marked elements.
As usual we denote by $h:=h_\cT := \max_{T\in\cT} \diam(T)$ the discretization parameter.

\begin{figure}[htb]
  \centerline{\includegraphics[width=0.55\textwidth]{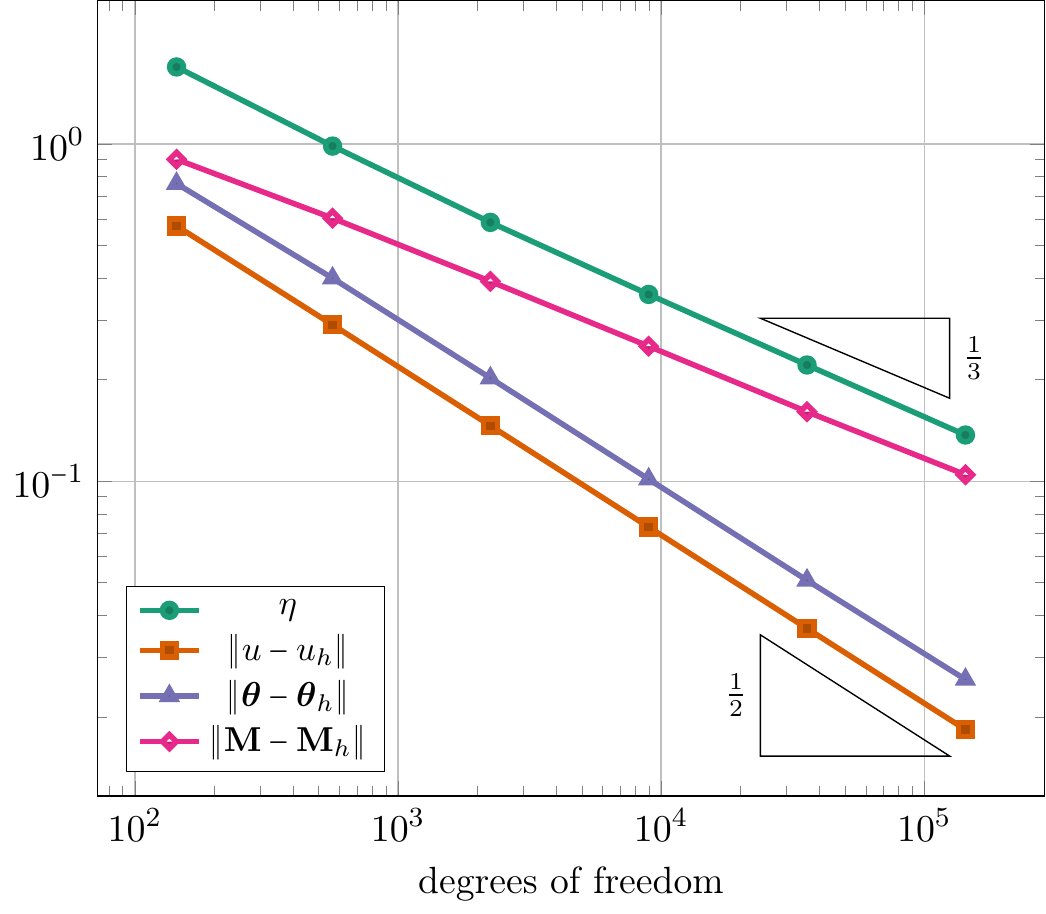}}
  \caption{Error curves for uniform mesh refinements.}
  \label{fig_unif}
\end{figure}

\begin{figure}[htb]
  \centerline{\includegraphics[width=0.55\textwidth]{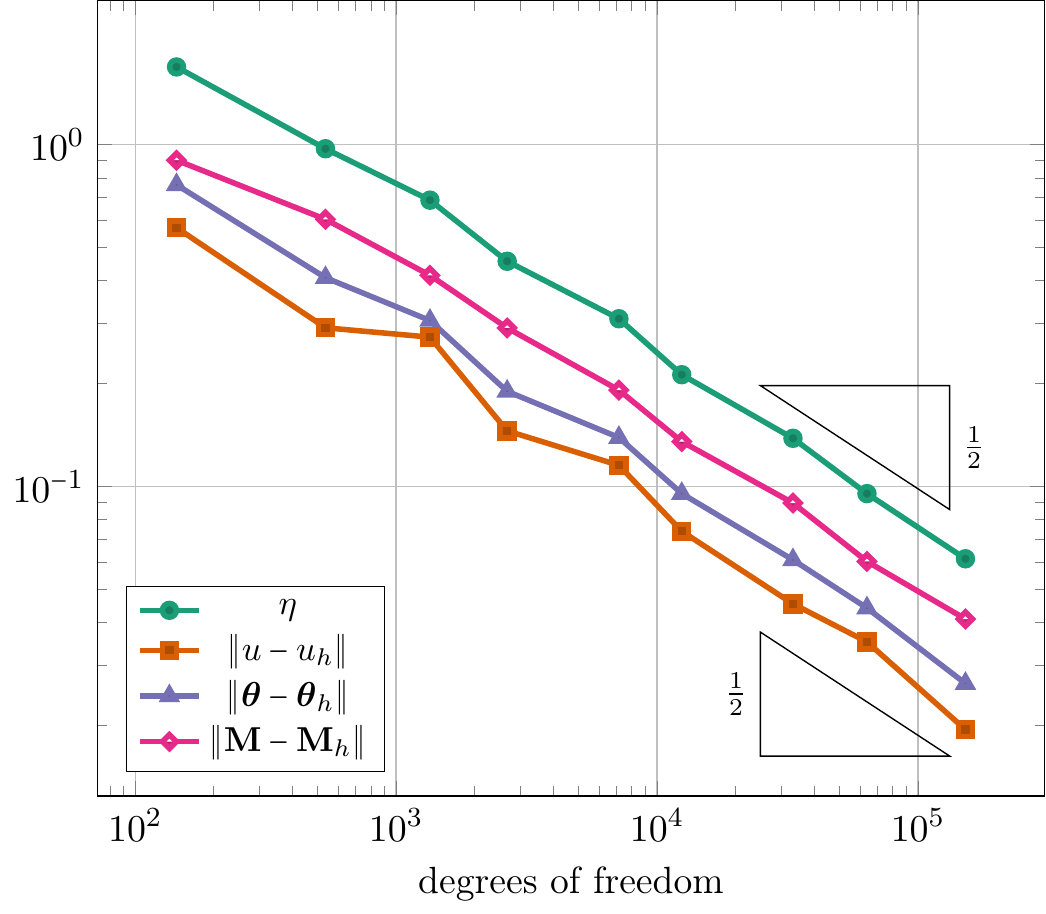}}
  \caption{Error curves for adaptively refined meshes.}
  \label{fig_adap}
\end{figure}

By the reduced regularity of $\MM$, we expect that uniform mesh refinements lead to a convergence order
$\OO(h^\alpha) = \OO(\dim(\UU_h)^{-\alpha/2})$ ($\alpha\approx 0.67$ by our selection).
Figure~\ref{fig_unif} confirms this rate. 
The numbers on the side of the triangles indicate negative slope with respect to $\dim(\UU_h)$.
The error curves for the energy norm $\eta$ and for $\MM$
in the $\LL_2^s(\Omega)$-norm both exhibit the expected slope, whereas the other field variables
$u$ and $\btheta=\grad u$ converge at a rate of about $O(h)$.
Figure~\ref{fig_adap} shows the corresponding results for adaptively refined meshes.
All field variables and the error in energy norm converge at a rate close to $O(h)$.

\bibliographystyle{siam}
\bibliography{/home/norbert/tex/bib/heuer,/home/norbert/tex/bib/bib}
\end{document}